\documentclass[12pt]{amsart}
\usepackage{amscd,amsmath,amsthm,amssymb}
\usepackage[left]{lineno}
\usepackage{color}
\usepackage{stmaryrd}
\usepackage[utf8]{inputenc}
\usepackage{cleveref}
\usepackage{epstopdf}
\usepackage{graphicx}
\usepackage{xcolor}

\usepackage{tikz}

\definecolor{verylight}{gray}{0.97}
\definecolor{light}{gray}{0.9}
\definecolor{medium}{gray}{0.85}
\definecolor{dark}{gray}{0.6}

%
%
%
\def\NZQ{\mathbb}               

\def\KK{{\NZQ K}}

%
%

\def\KK{{\NZQ K}}

\newcommand\frakP{\mathfrak{P}}
\newcommand\frakQ{\mathfrak{Q}}

%

\def\G{{\mathcal G}}

%

\def\0b{{\mathbf 0}}

\def\reg{{\mathbf reg}}
\def\height{\operatorname{ht}}
\def\depth{\operatorname{depth}}
\def\opn#1#2{\def#1{\operatorname{#2}}} 
%
\opn\chara{char} \opn\length{\ell} \opn\pd{pd} \opn\rk{rk}
\opn\projdim{proj\,dim} \opn\injdim{inj\,dim} \opn\rank{rank}
\opn\depth{depth} \opn\grade{grade} \opn\height{height}
\opn\embdim{emb\,dim} \opn\codim{codim}

\opn\Tr{Tr} \opn\bigrank{big\,rank}
\opn\superheight{superheight}\opn\lcm{lcm}
\opn\trdeg{tr\,deg}
	\opn\reg{reg} \opn\lreg{lreg} \opn\ini{in} \opn\lpd{lpd}
	\opn\size{size} \opn\sdepth{sdepth}
	\opn\link{link}\opn\fdepth{fdepth}\opn\lex{lex}
	\opn\tr{tr}
	\opn\type{type}
	\opn\gap{gap}
	\opn\arithdeg{arith-deg}
	\opn\HS{HS}
	\opn\GL{GL}
	%
	\opn\div{div} \opn\Div{Div} \opn\cl{cl} \opn\Cl{Cl}
	%
	%
	\opn\Spec{Spec} \opn\Supp{Supp} \opn\supp{supp} \opn\Sing{Sing}
	\opn\Ass{Ass} \opn\Min{Min}\opn\Mon{Mon}
	%
	%
	\opn\Ann{Ann} \opn\Rad{Rad} \opn\Soc{Soc}\opn\Deg{Deg}
	%
	%
	\opn\Im{Im} \opn\Ker{Ker} \opn\Coker{Coker} \opn\Am{Am}
	\opn\Hom{Hom} \opn\Tor{Tor} \opn\Ext{Ext} \opn\End{End}
	\opn\Aut{Aut} \opn\id{id}
	
	\opn\nat{nat}
	\opn\pff{pf}
	\opn\Pf{Pf} \opn\GL{GL} \opn\SL{SL} \opn\mod{mod} \opn\ord{ord}
	\opn\Gin{Gin} \opn\Hilb{Hilb}\opn\sort{sort}
	\opn\PF{PF}\opn\Ap{Ap}
	\opn\mult{mult}
	\opn\bight{bight}
	%
	%
	\opn\aff{aff}
	\opn\relint{relint} \opn\st{st}
	\opn\lk{lk} \opn\cn{cn} \opn\core{core} \opn\vol{vol}  \opn\inp{inp} \opn\nilpot{nilpot}
	\opn\link{link} \opn\star{star}\opn\lex{lex}\opn\set{set}
	\opn\width{wd}
	\opn\Fr{F}
	\opn\QF{QF}
	\opn\G{G}
	\opn\type{type}\opn\res{res}
	\opn\conv{conv}
	\opn\Ind{Ind}
	\opn\gr{gr}
	
	%
	%
	
	\def\pot#1#2{#1[\kern-0.28ex[#2]\kern-0.28ex]}

	%
	%
	\opn\dirlim{\underrightarrow{\lim}}
	\opn\inivlim{\underleftarrow{\lim}}
	%
	%
	%

	%
	%

	\def\Implies{\ifmmode\Longrightarrow \else
		\unskip${}\Longrightarrow{}$\ignorespaces\fi}
	\def\implies{\ifmmode\Rightarrow \else
		\unskip${}\Rightarrow{}$\ignorespaces\fi}
	\def\iff{\ifmmode\Longleftrightarrow \else
		\unskip${}\Longleftrightarrow{}$\ignorespaces\fi}

	\let\:=\colon
	\newtheorem{Theorem}{Theorem}[section]
	\newtheorem{Lemma}[Theorem]{Lemma}
	\newtheorem{Corollary}[Theorem]{Corollary}

	\newtheorem{Definition}[Theorem]{Definition}

	%
	\let\epsilon\varepsilon
	\let\kappa=\varkappa
	%
	%
	\textwidth=15cm \textheight=22cm \topmargin=0.5cm
	\oddsidemargin=0.5cm \evensidemargin=0.5cm \pagestyle{plain}
	%
	%
	\def\qed{\ifhmode\textqed\fi
		\ifmmode\ifinner\quad\qedsymbol\else\dispqed\fi\fi}
	\def\textqed{\unskip\nobreak\penalty50
		\hskip2em\hbox{}\nobreak\hfil\qedsymbol
		\parfillskip=0pt \finalhyphendemerits=0}
	\def\dispqed{\rlap{\qquad\qedsymbol}}
	
	%
	\opn\dis{dis}
	\def\pnt{{\raise0.5mm\hbox{\large\bf.}}}
	
	\opn\Lex{Lex}
	
	


	\begin{document}
		\title {Depth of powers of  edge ideals of  edge-weighted integrally closed cycles}
		
		\author {Guangjun Zhu$^{\ast}$, Jiaxin Li, Yijun Cui and Yi Yang}

		\address{School of Mathematical Sciences, Soochow University, Suzhou, Jiangsu, 215006, P. R. China}

		\email{ zhuguangjun@suda.edu.cn(Corresponding author:Guangjun Zhu),\linebreak[4]
			237546805@qq.com(Yijun Cui), lijiaxinworking@163.com(Jiaxin Li), 3201088194@qq.com(Yi Yang).}
		
		\thanks{$^{\ast}$ Corresponding author}
		
		\thanks{2020 {\em Mathematics Subject Classification}.
			Primary 13B22, 13F20; Secondary 05C99, 05E40}

		\thanks{Keywords:  Depth, edge-weighted graph, powers of the edge ideal, integrally closed cycle}

		
		
		
		\maketitle
		\begin{abstract}
			This paper gives some exact formulas for the depth  of powers of the edge ideal of an edge-weighted  integrally closed cycle.
		\end{abstract}
		\setcounter{tocdepth}{1}

		\section{Introduction}
	Let $G$ be a finite simple graph with the vertex set $V(G)$ and  the edge set $E(G)$.
		We write $xy$ for  an  edge of $G$ with $x$ and $y$ as endpoints.
		Suppose ${\bf w}: E(G)\longrightarrow \mathbb{Z}_{>0}$ is an edge-weighted function (weighted function for short) on $G$. We write $G_{\bf w}$ for the pair $(G,{\bf w})$ and call it an {\em edge-weighted} graph  (or just  weighted graph) with the underlying graph  $G$. For a weighted graph $G_{\bf w}$ with the vertex set $V(G)=\{x_1,\ldots,x_n\}$,
its {\em edge-weighted ideal}  (edge ideal  for short), was introduced in \cite{PS}, is a monomial ideal of the polynomial ring $S=\KK[x_{1},\dots, x_{n}]$ in $n$ variables over a field $\KK$, given by
		\[
		I(G_{\bf w})=(x^{{\bf w}(xy)}y^{{\bf w}(xy)}\mid xy\in E(G_{\bf w})).
		\]
		If ${\bf w}$ is the constant function defined by ${\bf w}(e)=1$ for all $e\in E(G)$, then  $I(G_{\bf w})$ is the classical  edge ideal of the underlying graph $G$ of $G_{\bf w}$, which
		has been  studied extensively in the literature \cite{FM,HTV,HH1,HT,M,MV,T,Zh1,Zh2}.
		
	In \cite{B},  Brodmann demonstrated that for a proper ideal $I\subset S$,  $\depth(S/I^t)$ is a constant for $t \gg 0$, and this constant is bounded above by $n-\ell (I)$, where $\ell(I)$ is the analytic spread of $I$.
 Both  Eisenbud and Huneke  in \cite[Proposition 3.3]{EH} and Herzog and Hibi in \cite[Theorem 1.2]{HH1}  independently showed that the upper bound can be obtained when the associated graded ring  is Cohen–Macaulay.
Furthermore, Herzog and Hibi proved \cite{HH1}  that  $\depth(S/I^t)$ is a non-increasing function if all the powers of $I$ have a linear resolution. They also provided lower bounds for $\depth(S/I^t)$ when all the powers of $I$ have linear quotients, a condition that entails that all powers of  $I$ have linear resolutions. In particular, they showed that the edge ideal of a finite graph whose complementary graph is chordal has linear quotients. Morey in \cite{M} gives lower bounds on the depths of all powers when  $I$  is the edge ideal of a forest.  For the  general graph,   Fouli1 and Morey in \cite{FM} gave lower bounds for its edge ideal.
In contrast, little is known about the depth of the quotient ring $S/I^t$ when $I$ is the  edge ideal of a weighted graph.

Recently, there has been a surge of interest in characterizing weights for which the edge ideal of a weighted graph is Cohen-Macaulay. For example, Paulsen and Sather-Wagstaff in \cite{PS}  classified Cohen-Macaulay  weighted graphs $G_{\bf w}$ where the underlying graph $G$ is a  cycle, a tree, or a  complete graph.  Seyed Fakhari et al. in \cite{FSTY} continued this study and classified  a
Cohen-Macaulay  weighted graph $G_{\bf w}$ when $G$ is a very well-covered graph.  Hien in \cite{Hi} classified Cohen-Macaulay weighted graphs $G_{\bf w}$ when $G$ has girth at least $5$.	
  Recently, the first second authors of this paper and Trung  in  \cite{LTZ}  characterized the Cohen-Macaulayness of the second power of the edge ideal of $G_{\bf w}$  when the underlying graph $G$ is a very well-covered graph and all ordinary powers of  the edge ideal of $G_{\bf w}$ when $G$ is a tree with a perfect matching consisting of pendant edges.  In \cite{W},  Wei classified all Cohen-Macaulay weighted chordal graphs from a purely graph-theoretic point of view.  Diem et al.  in \cite{DMV} gave a  complete  characterization of sequentially Cohen-Macaulay edge-weighted graphs.
  The first three authors of this paper and Duan in \cite{ZDCL} gave some exact formulas for the depth of the powers of the edge ideal of a weighted star graph and also gave some lower bounds on the depth of powers of the integrally closed weighted path.

In this paper,  we are interested in  the depth   of powers of the edge ideal $I(C_{\bf w}^n)$ of a weighted  integrally closed  $n$-cycle $C_{\bf w}^n$.
	The results we obtained are as follows:	

\begin{Theorem}
Let $C_{\bf w}^n$  be a non-trivially weighted integrally closed $n$-cycle. Then we have
\begin{itemize}
		\item[(1)]if $C_{\bf w}^n$ has only one edge with non-trivial weight, then
		 \[
			\depth (S/I(C_{\bf w}^n)^t)=\begin{cases}
			\lceil\frac{n-t}{3}\rceil,& \text{if $1\leq t<\lceil\frac{n+1}{2}\rceil$,}\\
			1,& \text{if $t\geq \lceil\frac{n+1}{2}\rceil$, $n$ is even,}\\
			0,& \text{if $t\geq \lceil\frac{n+1}{2}\rceil$, $n$ is odd.}
			\end{cases}
		\]
		\item[(2)] if $C_{\bf w}^n$ has two edges with non-trivial weights, then
		\[
			\depth (S/I(C_{\bf w}^n)^t)=\max\{\lceil \frac{n-t}{3}\rceil,1\} \text{ for all $t\geq 1$}.
		\]
		\item[(3)]if $C_{\bf w}^n$ has three edges with non-trivial weights, then
		\[
			\depth (S/I(C_{\bf w}^n)^t)=2 \text{ for all $t\geq 1$}.
		\]
\end{itemize}
\end{Theorem}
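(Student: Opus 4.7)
The plan is to treat the three cases separately, unified by an induction on the exponent $t$. I would first invoke the classification of integrally closed weighted cycles---either proved earlier in the paper or extracted from the analysis of integrally closed weighted paths in \cite{ZDCL}---which forces the weights to satisfy triangle-type inequalities and restricts the number of edges with weight $>1$ to precisely the three configurations listed. Once the configuration is fixed, the generators of $I(C_{\bf w}^n)$ are completely explicit, and the combinatorial skeleton of $C_{\bf w}^n$ is fully under control.

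The core inductive device in each case is the depth lemma applied to
\[
0 \to S/(I(C_{\bf w}^n)^t : f) \to S/I(C_{\bf w}^n)^t \to S/(I(C_{\bf w}^n)^t, f) \to 0,
\]
for a carefully chosen monomial $f$: typically a vertex of, or a squarefree monomial on, a nontrivially-weighted edge. In case (1), the quotient $S/(I^t, f)$ should reduce to an edge ideal supported on an unweighted path of length $n-2$, whose depth of powers is controlled by Morey's results \cite{M}, while the colon reduces to $I^{t-1}$ on a related graph, supplying the induction. The parity split at $t \geq \lceil (n+1)/2 \rceil$ mirrors the classical even/odd dichotomy for unweighted cycles, reflecting that the analytic spread is $n-1$ for even $n$ and $n$ for odd $n$. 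In case (2), the presence of a second weighted edge supplies a nonzerodivisor on $S/I^t$, preventing $\depth$ from dropping below $1$; combined with the case (1) analysis adapted to the two-edge configuration, this yields the $\max\{\lceil (n-t)/3\rceil, 1\}$ formula. In case (3), three nontrivial weights rigidify the structure so strongly that already $\depth S/I(C_{\bf w}^n) = 2$, and I would show by induction that both outer terms in the exact sequence have depth $\geq 2$, while an associated prime of height $n-2$ provides the matching upper bound.

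The main obstacle will be the precise determination of the colon ideal $I(C_{\bf w}^n)^t : f$ at each inductive step: in the weighted setting this is markedly more delicate than for squarefree edge ideals, because the generators $(xy)^{{\bf w}(xy)}$ interact nontrivially with $f$ and can survive the colon in partial form, often producing hybrid weighted-unweighted generators that must be tracked explicitly. A logically separate difficulty is showing that $\depth$ equals, not merely bounds from below, the stated value; I would handle this by exhibiting an explicit associated prime of the predicted height, or alternatively by polarizing $I(C_{\bf w}^n)^t$ and applying Hochster-type formulas to the resulting simplicial complex. The sharpness of the parity-dependent stabilization in case (1) and of the constant value $2$ in case (3) will likely be the two most delicate sub-points.
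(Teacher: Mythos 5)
Your skeleton (classify the admissible weight configurations, induct on $t$ via the depth lemma applied to $0 \to S/(I^t:f) \to S/I^t \to S/(I^t,f) \to 0$, and get upper bounds from explicit colon ideals and $\depth(S/I^t)\le\depth(S/(I^t:f))$) is indeed the paper's strategy, and for cases (2) and (3) it is essentially the right route; note, though, that in case (3) three non-trivial edges force $n=6$, that the paper obtains the value $2$ directly from the equality clauses of the depth lemma rather than from an associated prime of height $n-2$, and that in case (2) your claim that the second weighted edge ``supplies a nonzerodivisor on $S/I^t$'' is unsubstantiated --- the lower bound $\ge 1$ comes from the inductive depth-lemma estimates, while the matching upper bound requires exhibiting concrete monomials $f$ (e.g.\ $f=x_1(x_3x_4)^{(t-1){\bf w}_3}$ for $n=4$, and a long product of consecutive edges for $n\ge 5$) and computing $(I^t:f)$ exactly, which is where the real work sits and which your plan only names as ``the main obstacle.''

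The genuine gap is case (1). Taking $f$ to be a vertex of, or a squarefree monomial on, the non-trivially weighted edge does not close the induction: identities like $(I^t:x_ix_{i+1})=I^{t-1}$ are only available along trivial edges, and the resulting three-term inequalities cannot by themselves pin down the exact value $\lceil\frac{n-t}{3}\rceil$, nor the terminal values $0$ (odd $n$) and $1$ (even $n$); your appeal to Morey's results is also off, since \cite{M} gives only lower bounds for forests, whereas exact path formulas (from \cite{MTV} and \cite{ZDCL}) are what is needed. What the paper actually has to build, and what is absent from your plan, is: (i) separate lower-bound theorems for hybrid ideals $I(P_{\bf w}^n)^t+I(H)$ and $I(C_{\bf w}^n)^t+I(H')$ (a power plus a subgraph edge ideal), because these, not powers of smaller edge ideals, are what the recursion produces; (ii) exact computations of $(I^t:f)$ for long monomials such as $f=(x_1x_2)^{{\bf w}_1}\prod_{k=3}^{2t-2}x_k$ and $f=x_3(x_1x_2)^{{\bf w}_1}\prod_{k=3}^{t}(x_kx_{k+1})$, giving the sharp upper bound in the range $1\le t<\lceil\frac{n+1}{2}\rceil$; (iii) for odd $n$ and $t\ge\frac{n+1}{2}$, an explicit witness $f$ with $(I^t:f)=\mathfrak{m}$, so that $\mathfrak{m}\in\Ass(S/I^t)$ and the depth is $0$; and (iv) for even $n$ and $t\ge\frac{n}{2}+1$, the upper bound $\le 1$ again via an explicit colon, but the lower bound $\ge 1$ via a filtration of $I^t$ indexed by an ordered list of its generators (imported from \cite{ZCLY}), reducing to powers of a path ideal and a careful check that each colon in the filtration has depth at least $2$. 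Your parity/analytic-spread remark is a heuristic, not an argument, and the alternative polarization/Hochster route is left entirely unexecuted; without the ingredients above, the induction as you set it up cannot deliver the exact formulas in case (1).
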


The article is organized as follows. In Section \ref{sec:prelim}, we provide the definitions and basic facts that will be used throughout this paper.
Sections \ref{sec:cycle} and \ref{sec:power}  are devoted to prove our main results. By the depth lemma, we  provide exact formulas for $\depth(S/I(C_{\bf w}^n))$ for any weighted integrally closed  $n$-cycle $C_{\bf w}^n$, and exact formulas for  $\depth(S/I(C_{\bf w}^n)^t)$ if $C_{\bf w}^n$ has three edges with non-trivial weights or $C_{\bf w}^3$  has two edges with non-trivial weights. If $C_{\bf w}^n$ $(n\ge 4)$ has two edges with non-trivial weights, we can give the lower bound for $\depth(S/I^t)$ by using the depth lemma,
then, by choosing  suitable $f\notin I^t$ and  using the fact that $\depth(S/I(C_{\bf w}^n)^t)\le \depth(S/(I^t:f))$, we prove that $\depth(S/I^t)$ just reaches  this lower bound. If $C_{\bf w}^n$ has only one edge with non-trivial weight, we  can also give  exact formulas for $\depth(S/I(C_{\bf w}^n)^t)$ using  methods from \cite{MTV,ZCLY}.

		\section{Preliminaries}
		\label{sec:prelim}
		In this section, we will provide the definitions and basic facts that  will be used throughout this paper.
		For detailed information, we refer to \cite{BH} and \cite{HH}.
		
			Let $G$ be a  finite simple graph with the vertex set $V(G)$ and the edge set $E(G)$. For any subset $A$ of $V(G)$, $G[A]$ denotes the \emph{induced subgraph} of $G$ on the  set $A$, i.e.,
		for any $u,v \in A$, $uv \in E(G[A])$ if and only if $uv\in E(G)$.
		The induced subgraph of $G$ on the set $V(G)\setminus A$ is denoted by $G\setminus A$. In particular, if $A=\{v\}$  then we will write $G\setminus v$ instead of $G\setminus \{v\}$ for simplicity.
		For a vertex $v\in V(G)$, its \emph{neighborhood} is defined as $N_G(v)\!:=\{u \in V(G)\mid uv\in E(G)\}$, and  its degree  is defined as $\deg_{G}(v)=|N_G(v)|$.
		A connected graph $G$ is called a \emph{cycle} if $\deg_{G}(v)=2$ for all $v\in V(G)$. A cycle with $n$ vertices is called an $n$-cycle, denoted by $C^n$. A connected graph with vertex set $\{x_1,\ldots,x_n\}$, is called a \emph{path}, if $E(G)=\{x_ix_{i+1} \mid 1\leq i\leq n-1\}$. Such a path is  denoted by $P^n$.

		Given a weighted graph $G_{\bf w}$,  we denote its vertex and edge sets by $V(G_{\bf w})$ and $E(G_{\bf w})$, respectively. Any concept valid for graphs automatically applies to weighted graphs.
		For example, the \emph{neighborhood} of a vertex $v$ in a weighted graph $G_{\bf w}$ with the underlying graph  $G$ is defined as  $N_{G_{\bf w}}(v)\!:=\{u \in V(G)\mid uv\in E(G)\}$.
	For a subset  $A$ of $V(G)$, 	the \emph{induced subgraph} of  $G_{\bf w}$ on the set $A$  is the graph $G_{\bf w}[A]$ with vertex set $A$, and for any  $u,v\in A$,
		$uv\in E(G_{\bf w}[A])$ if and only if $uv\in E(G_{\bf w})$, and the weight function ${\bf w}'$ satisfies ${\bf w}'(uv)={\bf w}(uv)$. The induced subgraph of $G_{\bf w}$ on the set $V(G)\setminus A$ is denoted by $G_{\bf w} \setminus A$. In particular, if $A=\{v\}$  then we will write $G_{\bf w} \setminus v$ instead of $G_{\bf w} \setminus \{v\}$ for simplicity.
		A weighted  graph is said to be  a \emph{non-trivially} weighted  graph if there is at least one edge with  a weight  greater than $1$, otherwise, it is said to be
		a {\em trivially} weighted  graph. An edge $e \in E(G_{\bf w})$ is said to be {\em non-trivial} if its weight ${\bf w}(e) \ge2$. Otherwise, it is said to be {\em trivial}.

The following lemmas are often used  to compute the  depth  of a module.

		\begin{Lemma}  {\em (\cite[Lemmas 2.1]{HT})}
			\label{exact}
			Let $0\longrightarrow M\longrightarrow N\longrightarrow P\longrightarrow 0$ be a short exact
			sequence of finitely generated graded S-modules. Then
			\begin{itemize}
				\item[(1)] $\depth(N)\geq \min\{\depth(M), \depth(P)\}$, the equality holds if $\depth(P) \neq \depth(M)-1$;
				\item[(2)] $\depth(M)\geq \min\{\depth(N), \depth(P)+1\}$, the equality holds if $\depth(N) \neq \depth(P)$;
				\item[(3)] $\depth(P)\geq \min\{\depth(M)-1, \depth(N)\}$, the  equality holds if $\depth(M) \neq \depth(N)$.
			\end{itemize}
		\end{Lemma}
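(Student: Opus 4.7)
The plan is to prove all three statements simultaneously by passing to local cohomology. Over the graded polynomial ring $S$ with graded maximal ideal $\mm$, for any finitely generated graded $S$-module $X$ one has the Grothendieck-style characterization
\[
\depth(X)=\min\{i : \Coh{i}{X}\neq 0\}
\]
(with the usual convention that the minimum is $+\infty$ if no such $i$ exists). The short exact sequence $0\to M\to N\to P\to 0$ induces the long exact sequence
\[
\cdots\to \Coh{i-1}{P}\to \Coh{i}{M}\to \Coh{i}{N}\to \Coh{i}{P}\to \Coh{i+1}{M}\to\cdots,
\]
and every statement in the lemma is a direct consequence of reading off vanishing information from this sequence.

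For the inequality in (1), set $d=\min\{\depth(M),\depth(P)\}$. For each $i<d$ both $\Coh{i}{M}$ and $\Coh{i}{P}$ vanish, so exactness forces $\Coh{i}{N}=0$, yielding $\depth(N)\geq d$. The analogous arguments give (2) and (3): for (2), vanishing of $\Coh{i}{N}$ together with vanishing of $\Coh{i-1}{P}$ forces $\Coh{i}{M}=0$; for (3), vanishing of $\Coh{i}{N}$ together with vanishing of $\Coh{i+1}{M}$ forces $\Coh{i}{P}=0$. In each case the vanishing indices required are captured precisely by the right-hand side of the asserted inequality.

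The slightly more delicate part is the equality clauses. For (1), assume $\depth(P)\neq \depth(M)-1$ and let $d=\min\{\depth(M),\depth(P)\}$; I must show $\Coh{d}{N}\neq 0$. If $d=\depth(M)\leq \depth(P)$, then $\Coh{d-1}{P}=0$, so $\Coh{d}{M}$ injects into $\Coh{d}{N}$ and the latter is nonzero since the former is. If $d=\depth(P)<\depth(M)$, the hypothesis $\depth(P)\neq \depth(M)-1$ upgrades this to $d\leq \depth(M)-2$, hence $\Coh{d+1}{M}=0$; thus $\Coh{d}{N}$ surjects onto $\Coh{d}{P}\neq 0$, and again $\Coh{d}{N}\neq 0$. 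The equality clauses in (2) and (3) are handled by essentially the same two-sided squeeze: in each case the hypothesis on a gap between depths forces the relevant neighboring local cohomology to vanish, so the central module in the relevant three-term exact segment must be nonzero.

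The only genuine obstacle is bookkeeping the case analysis in the equality clauses—one has to identify, for each part, which neighboring term in the long exact sequence the hypothesis kills, and then check that this is exactly what is needed to transfer nonvanishing from $M$ or $P$ to $N$ (and symmetrically for parts (2) and (3)). Once this case split is made carefully, no computation beyond tracking exactness and minimal indices is required.
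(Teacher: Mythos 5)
Your proof is correct: the paper states this lemma without proof, quoting it from \cite{HT}, and your argument via the characterization $\depth(X)=\min\{i : H_{\mm}^{i}(X)\neq 0\}$ together with the long exact sequence of local cohomology is precisely the standard proof given in the cited literature. The inequality arguments and the case analysis for the equality clause in (1) are complete and accurate, and the sketched ``two-sided squeeze'' for the equality clauses of (2) and (3) does go through exactly as you indicate (in each case the hypothesis $\depth(N)\neq\depth(P)$, resp.\ $\depth(M)\neq\depth(N)$, kills the one neighboring term that could otherwise cancel the nonvanishing class).
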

		
		\begin{Lemma}{\em (\cite[Lemma 2.2]{HT})}
			\label{sum1}
			Let $S_{1}=\KK[x_{1},\dots,x_{m}]$ and $S_{2}=\KK[x_{m+1},\dots,x_{n}]$ be two polynomial rings  over a field $\KK$. Let $S=S_1\otimes_\KK S_2$ and  $I\subset S_{1}$,
			$J\subset S_{2}$ be two nonzero homogeneous  ideals.  Then
			\begin{itemize}
				\item[(1)] $\depth(S/(I+J))=\depth(S_1/I)+\depth(S_2/J)$;
				\item[(2)] $\depth(S/JI)=\depth(S_1/I)+\depth(S_2/J)+1$.
			\end{itemize}
		\end{Lemma}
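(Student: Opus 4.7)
The plan is to prove (1) by identifying $S/(I+J)$ with an external tensor product of $S_i$-modules, and then to deduce (2) from (1) via a Mayer--Vietoris short exact sequence combined with the depth lemma (Lemma~\ref{exact}).

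For (1), I would first note the graded $S$-module isomorphism $S/(I+J) \cong (S_1/I) \otimes_\KK (S_2/J)$, which is immediate from $S = S_1 \otimes_\KK S_2$ and right exactness of tensor products. Then I would induct on $d := \depth(S_1/I) + \depth(S_2/J)$. In the base case $d = 0$, both graded maximal ideals $\mathfrak{m}_1$ and $\mathfrak{m}_2$ are associated primes; choosing nonzero $a \in S_1/I$ and $b \in S_2/J$ annihilated by $\mathfrak{m}_1$ and $\mathfrak{m}_2$ respectively, the element $a \otimes b$ is annihilated by $\mathfrak{m} = \mathfrak{m}_1 + \mathfrak{m}_2$, forcing $\depth(S/(I+J)) = 0$. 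For the inductive step, assuming $\depth(S_1/I) \geq 1$, I would pick a homogeneous $x \in \mathfrak{m}_1$ regular on $S_1/I$; since tensoring over the field $\KK$ is exact, $x$ remains regular on the tensor product, and the quotient is identified with $(S_1/(I+(x))) \otimes_\KK (S_2/J)$, whose depth is $d - 1$ by induction.

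For (2), the essential preliminary fact is that $I \cap J = IJ$ in $S$, because $I$ and $J$ involve disjoint variables. This follows from $\Tor_1^S(S/I, S/J) = 0$, which can be checked by taking an $S_1$-free resolution of $S_1/I$, tensoring with $S_2$ over $\KK$ to get an $S$-free resolution of $S/I$, and then tensoring with $S/J = S_1 \otimes_\KK (S_2/J)$; the higher homologies collapse because $S_2/J$ is flat over $\KK$. This yields the Mayer--Vietoris sequence $0 \to S/IJ \to S/I \oplus S/J \to S/(I+J) \to 0$. The analogue of (1) (with one factor equal to the whole polynomial ring, proved identically) gives $\depth(S/I) = \depth(S_1/I) + (n-m)$ and $\depth(S/J) = m + \depth(S_2/J)$, while (1) itself gives $\depth(S/(I+J)) = \depth(S_1/I) + \depth(S_2/J) =: d$. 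Since $I$ and $J$ are nonzero proper homogeneous ideals, $\depth(S_2/J) \leq n - m - 1$ and $\depth(S_1/I) \leq m - 1$, so both $\depth(S/I)$ and $\depth(S/J)$ are at least $d + 1$, whence $\depth(S/I \oplus S/J) \geq d + 1$.

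Applying Lemma~\ref{exact}(2) to the Mayer--Vietoris sequence gives $\depth(S/IJ) \geq \min\{\depth(S/I \oplus S/J),\, d + 1\} = d + 1$, and the strict inequality $\depth(S/I \oplus S/J) > \depth(S/(I+J))$ forces equality, so $\depth(S/IJ) = d + 1$. The main technical obstacle I anticipate is justifying the two structural facts $S/(I+J) \cong (S_1/I) \otimes_\KK (S_2/J)$ and $I \cap J = IJ$ together with the depth-preservation under external tensor product; all of these reduce, however, to flatness of modules and rings over the base field $\KK$, so once the bookkeeping is done carefully everything falls into place.
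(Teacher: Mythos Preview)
Your argument is correct and follows the standard route: identify $S/(I+J)$ with the external tensor product over $\KK$ and induct on the depth for (1), then use the Mayer--Vietoris sequence arising from $I\cap J=IJ$ together with Lemma~\ref{exact}(2) for (2). The depth computations for $S/I$ and $S/J$ as $S$-modules and the inequality $\depth(S/I\oplus S/J)>\depth(S/(I+J))$ are handled correctly.

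Note, however, that the paper does not give its own proof of this lemma: it is quoted verbatim from \cite[Lemma~2.2]{HT} and used as a black box throughout. So there is no ``paper's proof'' to compare against; your write-up supplies exactly the kind of argument one would find in the cited source (or in standard references on depth and tensor products), and nothing more is required here.
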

	
		\begin{Lemma}{\em (\cite[Corollary 1.3]{R}, \cite[Lemma 2.1]{HT})}\label{up}
		Let $I\subset S$ be a monomial ideal and $f \notin I$ be a monomial of degree $s$. Then
		\[
 \depth( S / I) \leq \depth(S /(I: f)).
\]
Furthermore,  it follows from   Lemma	\ref{exact} and the following short exact sequence
\begin{gather}\label{eqn: equality1}
0  \longrightarrow  \frac{S}{I : f}(-s)   \stackrel{ \cdot f} \longrightarrow   \frac{S}{I}  \longrightarrow  \frac{S}{(I,f)} \longrightarrow  0
	\end{gather}
that
\[
\depth(S/I)\geq \min\{\depth(S/(I:f)), \depth(S/(I,f))\}.
\]
\end{Lemma}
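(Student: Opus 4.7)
The plan is to address the two inequalities separately. The second one, $\depth(S/I)\ge\min\{\depth(S/(I:f)),\depth(S/(I,f))\}$, is a direct application of the depth lemma to the displayed exact sequence; the first one, $\depth(S/I)\le\depth(S/(I:f))$, lies deeper and relies on the $\ZZ^n$-multigraded structure of $S/I$ rather than on the depth lemma alone.

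I would begin by verifying the short exact sequence. Multiplication by $f$ defines an $S$-linear map $\phi\colon S\to S/I$, $g\mapsto fg+I$, which raises degree by $s=\deg f$. Its kernel is exactly $(I:f)=\{g\in S\mid fg\in I\}$, and its image is $(fS+I)/I=(I,f)/I$. Factoring through the kernel yields an injection $S/(I:f)(-s)\hookrightarrow S/I$ with image $(I,f)/I$, so the cokernel is $S/I$ modulo $(I,f)/I$, which is canonically isomorphic to $S/(I,f)$. This produces the short exact sequence displayed in the statement.

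Applying Lemma~\ref{exact}(1) to this sequence, and using that $\depth$ is invariant under a degree shift, immediately gives $\depth(S/I)\ge\min\{\depth(S/(I:f)),\depth(S/(I,f))\}$, which settles the lower bound.

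For the upper bound $\depth(S/I)\le\depth(S/(I:f))$, applying Lemma~\ref{exact}(2) to the same sequence yields only $\depth(S/(I:f))\ge\min\{\depth(S/I),\depth(S/(I,f))+1\}$, which is insufficient whenever $\depth(S/(I,f))+1<\depth(S/I)$. This is precisely the main obstacle: the homological input from the depth lemma cannot distinguish this range, because the short exact sequence does not record that $f$ is a monomial. To close the gap I would invoke Rauf's Corollary~1.3 in~\cite{R}, whose proof exploits the finer $\ZZ^n$-multigraded decomposition of $S/I$ to establish $\depth(S/I)\le\depth(S/(I:f))$ for any monomial ideal $I$ and any monomial $f\notin I$, giving the desired inequality.
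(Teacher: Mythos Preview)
Your proposal is correct and matches the paper's treatment: the paper does not prove this lemma but cites it from \cite{R} and \cite{HT}, with the second inequality explained in the statement itself as an application of Lemma~\ref{exact} to the displayed exact sequence. Your write-up supplies the missing details---verifying the exactness and explaining why the depth lemma alone cannot yield the upper bound---and correctly defers that inequality to Rauf's multigraded argument, exactly as the paper does by citation.
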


		\begin{Lemma}{\em (\cite[Theorem 3.6]{MTV})}
			\label{path}
			Let $P_{\bf w}^n$ be a trivially weighted path, then  for all $t\ge 1$,
			$$\depth(S/I(P^n_{\bf w})^t)=\max\{\lceil \frac{n-t+1}{3} \rceil,1\}.$$
		\end{Lemma}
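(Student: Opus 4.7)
The plan is to establish the formula by induction on $t$, using the short exact sequence toolkit of Lemmas~\ref{exact}, \ref{sum1} and \ref{up}, with the $t=1$ case as the anchor. For the base case $t=1$ the statement reduces to the classical formula $\depth(S/I(P^n)) = \max\{\lceil n/3 \rceil, 1\}$, which I would prove by induction on $n$ by setting $f = x_{n-1}$ in Lemma~\ref{up}: direct computation gives $(I(P^n) : x_{n-1}) = (x_{n-2}, x_n) + I(P^{n-3})$ on $\{x_1, \ldots, x_{n-3}\}$, while $(I(P^n), x_{n-1}) = (x_{n-1}) + I(P^{n-2})$ on $\{x_1, \ldots, x_{n-2}\}$ with $x_n$ a free variable. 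Lemma~\ref{sum1}(1) together with the inductive hypothesis computes both auxiliary depths, and Lemma~\ref{exact} applied to \eqref{eqn: equality1} combines them to yield $\lceil n/3 \rceil$.

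For the inductive step $t \geq 2$, I would prove the two inequalities separately. For the lower bound I would pick a monomial supported near one end of the path, a natural candidate being $f = (x_{n-1} x_n)^{t-1}$; a routine divisibility analysis should show that $(I(P^n)^t : f)$ simplifies to a sum of a linear generator with $I(P^{n'})^{t-1}$ on a shorter subpath, to which Lemma~\ref{sum1}(1) and the induction hypothesis apply, while $(I(P^n)^t, f)$ splits by disjoint support via Lemma~\ref{sum1}(2). Combining these through Lemma~\ref{exact} (or equivalently the lower-bound half of Lemma~\ref{up}) yields $\depth(S/I(P^n)^t) \geq \max\{\lceil (n-t+1)/3 \rceil, 1\}$. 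For the matching upper bound, I would invoke the upper-bound half of Lemma~\ref{up} with a strategically chosen $f \notin I(P^n)^t$, for instance $f = x_1(x_2 x_3)^{t-1}$, whose colon ideal after monomial bookkeeping should consist of linear forms near one end together with a smaller power of the edge ideal of a subpath, so that Lemma~\ref{sum1}(1) and the induction hypothesis pin down $\depth(S/(I(P^n)^t : f))$ exactly to $\max\{\lceil (n-t+1)/3 \rceil, 1\}$.

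The principal difficulty lies in the upper-bound step: producing a single monomial $f$ whose colon has depth matching the ceiling $\lceil (n-t+1)/3 \rceil$ precisely, not merely up to an additive constant. The correct choice is sensitive to the residue of $n - t + 1 \pmod 3$ and to whether the ceiling has dropped below $1$, so a refined case analysis with possibly different $f$'s in different congruence classes may be needed; verifying that the colon ideal has the predicted structural form, so that Lemma~\ref{sum1} together with the induction hypothesis produces the target equality, is the most technically delicate part. The $\max$ clamp at $1$ also requires separate handling in the regime $n - t + 1 \leq 3$, where one falls back on the observation that $\depth(S/J) \geq 1$ for any proper monomial ideal $J \subset S$ generated in positive degree.
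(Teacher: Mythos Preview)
The paper does not prove Lemma~\ref{path}; it is quoted verbatim from \cite[Theorem~3.6]{MTV} and used as a black box throughout. There is therefore no proof in the paper against which to compare your proposal.

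That said, your outline contains a genuine error. The claim at the end that ``$\depth(S/J) \geq 1$ for any proper monomial ideal $J \subset S$ generated in positive degree'' is false: $\mathfrak{m}$ itself, or $(x_1^2, x_1x_2, x_2^2) \subset \KK[x_1,x_2]$, are counterexamples. The correct reason that $\depth(S/I(P^n)^t) \geq 1$ for all $t$ is that $P^n$ is bipartite, so every associated prime of every power of $I(P^n)$ is generated by a minimal vertex cover; in particular $\mathfrak{m} \notin \Ass(S/I(P^n)^t)$. Without this input your clamp-at-$1$ argument collapses. A softer gap: your lower-bound sketch asserts that $(I(P^n)^t, (x_{n-1}x_n)^{t-1})$ ``splits by disjoint support via Lemma~\ref{sum1}(2)'', but $x_{n-1}, x_n$ lie in the support of $I(P^n)^t$, so Lemma~\ref{sum1} does not apply directly. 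The argument in \cite{MTV} instead routes the lower bound through the auxiliary inequality recorded here as Lemma~\ref{path3}, bounding $\depth(S/(I(P^n)^t + I(H)))$ uniformly over subgraphs $H$, which is what makes the induction close.
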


	\section{The depth of the edge ideal}
		\label{sec:cycle}
		
		Calculating or even estimating the depth  of edge ideals of weighted cycles with arbitrary weight functions is a  difficult problem. In this section, we will give precise formulas for the  depth of the edge ideal of  a non-trivially weighted  integrally closed  $n$-cycle.  First, we recall that the definition of an ideal is integrally closed.
\begin{Definition}{\em (\cite[Definition 1.4.1]{HH})}
\label{integrally closed_1} Let  $I$  be an ideal in a ring $R$.
    An element $f \in R$ is  said to be {\em integral} over $I$ if there exists an equation
    \[
    f^k+c_1f^{k-1}+\dots+c_{k-1}f+c_k=0 \text{\ \ with\ \ }c_i \in I^i.
    \]
   The set $\overline{I}$ of elements in $R$ which are integral over $I$  is the \emph{integral closure} of $I$.
If $I=\overline{I}$, then $I$  is said to be  {\em integrally closed}.
    A weighted graph $G_\omega$ is said to be  {\em integrally closed} if its edge ideal $I(G_\omega)$ is integrally closed.
    \end{Definition}

We will use the following notation for the remainder of this paper. Let  $n$ be a  positive  integer, the
notation $[n]$ denotes by convention the set $\{1,2,\ldots, n\}$. If $n\ge 3$, let  $C_{\bf w}^n$ denote a non-trivially weighted integrally closed  $n$-cycle with the vertex set  $V(C_{\bf w}^n)=\{x_1,\ldots,x_n\}$ and the edge set $E(C_{\bf w}^n)$. We also denote  $e_i=x_ix_{i+1}\in E(C_{\bf w}^n)$ with $x_{n+1}=x_1$ and ${\bf w}_i={\bf w}(e_i)$ for all $i \in [n]$. By symmetry, we can assume that
${\bf w}_1\geq 2$, ${\bf w}_1\geq {\bf w}_3$ and ${\bf w}_3\geq {\bf w}_5$ if $e_5\in E(C^n_{\bf w})$.
Let $P_{\bf w}^n$ be an induced subgraph of the $(n+1)$-cycle $C_{\bf w}^{n+1}$ on the set $\{x_1,x_2,\ldots,x_n\}$.

According to \cite[Theorem 1.4.6]{HH}, every trivially weighted  graph $G_{\bf w}$ is integrally closed. The following lemma gives a complete characterization of a non-trivially weighted  graph that is integrally closed.
		
		\begin{Lemma}{\em (\cite[Theorem 3.6]{DZCL} and \cite[Theorem 2.8]{VZ}) }\label{integral}
		Let   $G_{\bf w}$ be  a  non-trivially  weighted  graph, then $G_{\bf w}$ is  integrally closed if and only if  $G_{\bf w}$  does not contain  any of the following three graphs as its induced subgraphs.
			\begin{enumerate}
				\item  A path  $P_{\bf w}^3$ of length $2$ where  all edges have non-trivial weights.
				\item The  disjoint union $P_{\bf w}^2\sqcup P_{\bf w}^2$ of two paths $P_{\bf w}^2$  of length $1$ where all edges have non-trivial weights.
				\item A $3$-cycle $C_{\bf w}^3$ where all edges have non-trivial weights.
			\end{enumerate}
		\end{Lemma}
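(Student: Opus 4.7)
The plan is to establish the equivalence using the standard Newton polyhedron description of the integral closure of a monomial ideal: for $I=(\mathbf{x}^{\beta_1},\dots,\mathbf{x}^{\beta_r})$ in $S$, a monomial $\mathbf{x}^{\alpha}$ lies in $\overline{I}$ if and only if $\alpha\in\mathrm{conv}\{\beta_1,\dots,\beta_r\}+\RR_{\geq 0}^{n}$. Equivalently, $I$ is integrally closed precisely when every integer lattice point of this Newton polyhedron $NP(I)$ is componentwise dominated by one of the $\beta_i$.

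For the ``only if'' direction I argue by contrapositive and construct an explicit witness $f\in\overline{I(G_{\bf w})}\setminus I(G_{\bf w})$ in each of the three forbidden cases. In case (1), with an induced path on vertices $x,y,z$ and edges $xy,yz$ of weights $a,b\geq 2$, take $f=x^{a-1}y^{a+b-1}z^{b-1}$: the identity $f^{2}=(x^{a}y^{a})(y^{b}z^{b})\cdot x^{a-2}y^{a+b-2}z^{b-2}$ shows $f^{2}\in I(G_{\bf w})^{2}$, and since the path is induced the only generators of $I(G_{\bf w})$ supported on $\{x,y,z\}$ are $x^{a}y^{a}$ and $y^{b}z^{b}$, neither of which divides $f$ (as $\deg_{x}f=a-1<a$ and $\deg_{z}f=b-1<b$). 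In case (2), with disjoint non-trivial edges $xy$ and $zw$ of weights $a,b\geq 2$, take $f=x^{a-1}y^{a-1}z^{b-1}w^{b-1}$: the identity $f^{ab}=(x^{a}y^{a})^{b(a-1)}(z^{b}w^{b})^{a(b-1)}$ exhibits $f^{ab}$ as a product of $2ab-a-b$ generators of $I(G_{\bf w})$, and since $2ab-a-b\geq ab$ for $a,b\geq 2$ we obtain $f^{ab}\in I(G_{\bf w})^{ab}$; again $f$ is divisible by neither edge generator. In case (3), with a triangle of weights $a,b,c\geq 2$, a short Newton polyhedron computation produces an integer point in $\mathrm{conv}\{(a,a,0),(0,b,b),(c,0,c)\}+\RR^{3}_{\geq 0}$ not dominated by any of these three vertices --- for instance, when $a=b=c=2$ the monomial $f=xyz^{2}$ satisfies $f^{2}=(y^{2}z^{2})(x^{2}z^{2})\in I(G_{\bf w})^{2}$ with $f\notin I(G_{\bf w})$, and an analogous lattice-point argument handles general $a,b,c\geq 2$.

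For the ``if'' direction, assume $G_{\bf w}$ contains none of the three configurations. The forbidden-subgraph hypothesis forces a very restrictive arrangement of non-trivial edges: any two adjacent non-trivial edges must be completed by a third trivial edge into a triangle (else (1) applies), any two disjoint non-trivial edges must be joined by an extra edge of $G$ (else (2) applies), and no triangle may have all three edges non-trivial (by (3)). Running a case analysis on this local arrangement and invoking the Newton polyhedron criterion, one shows that every integer lattice point of $NP(I(G_{\bf w}))$ is dominated componentwise by some generator exponent: the trivial-weight generators $xy$ surrounding each non-trivial generator ``buffer'' the polyhedron so that no new interior lattice points can appear.

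The main obstacle I expect is precisely this case analysis in the sufficiency direction: enumerating the allowed local configurations of non-trivial edges (an isolated non-trivial edge, a non-trivial edge sitting in a triangle with trivial completing edges, several non-trivial edges linked through obligatory trivial connecting edges) and verifying in each case that no convex combination of a non-trivial generator exponent with other generator exponents produces an integer lattice point escaping all dominations. The Newton polyhedron criterion reduces the task to a finite combinatorial check, but organizing that check cleanly is the delicate part of the proof.
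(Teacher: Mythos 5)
There is no in-paper proof to compare against: the paper quotes this lemma from \cite[Theorem 3.6]{DZCL} and \cite[Theorem 2.8]{VZ}, so your attempt has to be judged as a proof of the cited result itself. Your ``only if'' direction is essentially sound: the description of $\overline{I}$ via $f^k\in I^k$ (equivalently the Newton polyhedron) is the right tool, and your witnesses for (1) and (2) check out --- indeed in case (2) you do not need the $ab$-th power, since $f^2=(x^ay^a)(z^bw^b)\cdot x^{a-2}y^{a-2}z^{b-2}w^{b-2}\in I^2$ already --- and induced-ness correctly guarantees that no generator divides $f$. The one loose end there is the triangle with general weights $a,b,c\ge 2$, which you merely assert; it is easily closed: letting the edge $zx$ carry the largest weight $c$, take $f=x^{\lceil a/2\rceil}y^{\lceil (a+b)/2\rceil}z^{\lceil b/2\rceil}$, so that $(x^ay^a)(y^bz^b)$ divides $f^2$, while $x^ay^a\nmid f$ and $y^bz^b\nmid f$ because $\lceil a/2\rceil<a$, $\lceil b/2\rceil<b$, and $z^cx^c\nmid f$ because $\lceil a/2\rceil<a\le c$.

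The genuine gap is the ``if'' direction, which is the substantive half of the theorem and which your proposal does not prove: you restate the criterion to be verified (every lattice point of the Newton polyhedron is dominated by a generator exponent) and then defer the entire verification to an unexecuted ``case analysis,'' with the ``buffering'' role of the trivial edges taken on faith. Moreover, the reduction to ``a finite combinatorial check'' is itself unjustified: a lattice point of $\mathrm{conv}\{\beta_1,\dots,\beta_r\}+\RR_{\ge 0}^n$ is a priori a convex combination of arbitrarily many generator exponents spread over the whole graph, so one needs a localization argument (a Carath\'eodory-type or coefficient-exchange/rounding step) showing that any non-dominated lattice point can be traced to at most two or three edges forming one of the three forbidden configurations --- this is precisely where the work of \cite[Theorem 3.6]{DZCL} and \cite[Theorem 2.8]{VZ} lies, and nothing in your sketch supplies it. As written, the proposal establishes only the necessity of the forbidden-subgraph condition, not the stated equivalence.
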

		
		From the  above lemma, we can derive

\begin{Corollary} \label{cycleinteg}
			Let $C_{\bf w}^n$ be a  non-trivially weighted integrally closed $n$-cycle. If $n=6$, it can have  at most three edges with non-trivial weights, otherwise it can have up to two edges with non-trivial weights.
		\end{Corollary}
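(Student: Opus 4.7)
The plan is to apply Lemma~\ref{integral} and analyze how many non-trivial edges $C_{\bf w}^n$ can carry without inducing any of the three forbidden subgraphs listed there.

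I first dispose of $n=3$: three non-trivial edges give exactly Lemma~\ref{integral}(3), while two non-trivial edges cannot violate any condition (the triangle contains no induced $P^3$, and only three vertices are available), so the bound $k\le 2$ holds. For $n\ge 4$, the cycle has no chord, so any two adjacent edges of $C^n$ induce a $P^3$ on their three endpoints; Lemma~\ref{integral}(1) then forces the non-trivial edges to form a matching of $C^n$. The key observation is the following escape condition for Lemma~\ref{integral}(2): for two vertex-disjoint edges $e_i=x_ix_{i+1}$ and $e_j=x_jx_{j+1}$ of $C^n$, the induced subgraph on $\{x_i,x_{i+1},x_j,x_{j+1}\}$ is exactly $P^2\sqcup P^2$ unless one of the two arcs of $C^n$ between them is a single edge (equivalently $j-i\in\{2,n-2\}$).

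Assume now $k\ge 3$, and label the non-trivial edges $e_{p_1},\dots,e_{p_k}$ in cyclic order with gaps $\delta_s=p_{s+1}-p_s-1\ge 1$ (indices cyclic), satisfying $\sum_{s=1}^{k}\delta_s=n-k$. For $k=3$, each consecutive pair $(e_{p_s},e_{p_{s+1}})$ has its opposite arc passing through the third non-trivial edge together with the two other gaps, totaling at least three intermediate edges; the escape condition thus forces the short arc to be a single edge, i.e.\ $\delta_s=1$, and summing gives $n=k+\sum_s\delta_s=6$. For $k\ge 4$, I would choose two non-trivial edges that are not cyclically consecutive; each arc between them then contains at least one further non-trivial edge plus at least two non-adjacency gaps, hence at least three intermediate edges, so neither arc is a single edge and an induced $P^2\sqcup P^2$ appears, contradicting Lemma~\ref{integral}(2).

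The main obstacle is the cyclic gap bookkeeping in the case $k\ge 3$; once the escape condition is isolated, one sees that it is rigid enough that requiring it simultaneously for every pair of non-trivial edges pins the configuration down to $k=3$ and $n=6$ (the unique case where the three gaps can all equal $1$).
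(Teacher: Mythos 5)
Your argument is correct and is exactly the derivation from Lemma~\ref{integral} that the paper intends (the paper states the corollary without writing out a proof): condition (1) forces the non-trivial edges to form a matching for $n\ge 4$, condition (2) together with your arc/gap bookkeeping rules out four or more non-trivial edges and pins three non-trivial edges to $n=6$, and condition (3) settles $n=3$. No gaps; this matches the paper's approach, just with the details made explicit.
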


		The following lemmas provide some  formulas for the depth of powers of the edge ideal of a non-trivially integrally closed path.

		\begin{Lemma}{\em (\cite[Theorems 4.8 and 4.9]{ZDCL}}{\em )}
			\label{nontrivialpath1}
			Let $P_{\bf w}^n$  be a non-trivially weighted integrally closed  path. Let    ${\bf w}_i\ge 2$ and ${\bf w}_i\ge {\bf w}_{i+2}$ if $e_{i+2}\in E({P_{\bf w}^n})$. Then
			\begin{itemize}
				\item[(1)] if $n\le 3$, then  $\depth (S/I(P_{\bf w}^n))=1$;
				\item[(2)] if $n=4$, then  $\depth (S/I(P_{\bf w}^n))=2-a$, where  $a=\begin{cases}
					0,& \text{if ${\bf w}_2=1$,}\\
					1,& \text{otherwise.}
				\end{cases}$;
				\item[(3)]\label{nontrivialpath1-3}  if $n\ge 5$, then  $\depth (S/I(P_{\bf w}^n))=\min \{\lceil \frac{i}{3} \rceil+\lceil \frac{n-i-b}{3} \rceil,\lceil \frac{i-2}{3} \rceil+\lceil \frac{n-i-2}{3} \rceil+1\}$,
			\end{itemize}
	where  $b=\begin{cases}
					0,& \text{if ${\bf w}_{i+2}=1$,}\\
					1,& \text{otherwise}.
				\end{cases}$.
	\end{Lemma}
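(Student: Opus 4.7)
My plan is to induct on $n$, treating the small cases separately and then reducing the general case via a well-chosen short exact sequence. Throughout I will take $i$ to be a fixed non-trivial index, chosen so that ${\bf w}_i$ is a largest non-trivial weight, as permitted by the hypothesis ${\bf w}_i \ge {\bf w}_{i+2}$.

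For parts (1) and (2) (i.e.\ $n \le 4$) I would argue by direct analysis. If $n \le 3$, then $I(P_{\bf w}^n)$ has at most two generators, so its primary decomposition is short enough to read off $\depth = 1$ by hand (equivalently, $S/I(P_{\bf w}^n)$ decomposes as a sum of a free module over a polynomial subring and a simpler quotient to which Lemma~\ref{sum1} applies). For $n = 4$ the cases ${\bf w}_2 = 1$ and ${\bf w}_2 \ge 2$ must be separated: if ${\bf w}_2 = 1$, the trivial middle edge lets me split $I$ as a sum of two ideals in disjoint variable sets after a suitable colon, and Lemma~\ref{sum1}(1) yields $\depth = 2$; if ${\bf w}_2 \ge 2$, the integrally-closed hypothesis forces ${\bf w}_1 = {\bf w}_3 = 1$, and the non-trivial middle edge couples the two halves tightly enough to collapse $\depth$ to $1$.

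For part (3), the central move is to apply Lemma~\ref{up} with a monomial $f \notin I$ localized near the distinguished non-trivial edge $e_i$. A natural candidate is $f = x_{i+1}^{{\bf w}_i - 1}$ (or symmetrically $f = x_i^{{\bf w}_i - 1}$), designed so that $(I:f)$ picks up $x_i x_{i+1}$ as a generator---equivalently, so that the non-trivial generator $x_i^{{\bf w}_i} x_{i+1}^{{\bf w}_i}$ is replaced by a trivial edge. The colon ideal then factors as a product of two ideals supported on the disjoint variable sets $\{x_1, \ldots, x_i\}$ and $\{x_{i+2}, \ldots, x_n\}$: the left factor is $I(P^i)$ for a trivially weighted path, whose depth is $\lceil i/3 \rceil$ by Lemma~\ref{path} with $t = 1$; the right factor is the edge ideal of a shorter integrally closed weighted path, which is trivially weighted precisely when $b = 0$, and whose depth is delivered by induction. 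Lemma~\ref{sum1}(2) then glues these into the second candidate $\lceil (i-2)/3 \rceil + \lceil (n-i-2)/3 \rceil + 1$ in the $\min$. A parallel computation for $S/(I,f)$ uses Lemma~\ref{sum1}(1) to produce the first candidate $\lceil i/3 \rceil + \lceil (n-i-b)/3 \rceil$, and Lemma~\ref{exact} combines them into the exact formula.

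The main obstacle I anticipate is the correct choice of $f$ and the careful bookkeeping of how colon and sum operations interact with the whole chain of non-trivial edges $e_i, e_{i+2}, e_{i+4}, \ldots$ permitted by Lemma~\ref{integral}. In particular, verifying that $(I,f)$ really splits cleanly in disjoint variables---without leaving residual couplings at the boundary vertices $x_i, x_{i+1}, x_{i+2}$---requires a secondary case analysis on whether ${\bf w}_{i+2}$ is trivial or not, which is exactly what the parameter $b$ records. The delicate endgame is then to show that the two depths arising from the short exact sequence not only bound $\depth(S/I)$ from above and below via Lemma~\ref{up} and Lemma~\ref{exact}(1), but actually realize the $\min$, so that equality (rather than inequality) holds; this will likely force a further refinement of $f$ when the two candidates coincide or when $i$ is near the endpoints of the path.
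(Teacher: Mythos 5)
First, note that this lemma is not proved in the paper at all: it is quoted verbatim from \cite[Theorems 4.8 and 4.9]{ZDCL}, so there is no in-paper argument to compare against, and your sketch has to stand on its own. On its own terms it contains a repairable slip and one genuine gap. The slip: with $f=x_{i+1}^{{\bf w}_i-1}$ the colon $(I:f)$ acquires $x_i^{{\bf w}_i}x_{i+1}$ and $x_{i+2}$, not the trivial edge $x_ix_{i+1}$; the witness you actually want is $f=(x_ix_{i+1})^{{\bf w}_i-1}$, for which $(I:f)=I(P\ \text{on}\ x_1,\dots,x_{i-2})+(x_{i-1},x_{i+2})+(x_ix_{i+1})+I(P\ \text{on}\ x_{i+3},\dots,x_n)$, a sum of ideals in disjoint variable sets (so Lemma \ref{sum1}(1), not \ref{sum1}(2), and your accounting mixing $\lceil i/3\rceil$ with $\lceil (i-2)/3\rceil$ is inconsistent); its depth is exactly the second candidate $\lceil\frac{i-2}{3}\rceil+\lceil\frac{n-i-2}{3}\rceil+1$, giving $\depth(S/I)\le$ that value by Lemma \ref{up}, while the exact sequence gives $\depth(S/I)\ge\min\{\depth(S/(I:f)),\depth(S/(I,f))\}$.

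The genuine gap is the endgame you flag but do not resolve: this pair of bounds pins down $\depth(S/I)$ only when $\depth(S/(I:f))\le\depth(S/(I,f))$, i.e.\ when the second candidate is the minimum. When the first candidate is strictly smaller you get an interval, not equality. Concretely, take $n=9$, $i=3$, ${\bf w}_3=2$ the only non-trivial weight (so $b=0$): the asserted formula gives $\min\{1+2,\,1+2+1\}=3$, your colon bound gives $4$, and $(I,f)=(I,x_3x_4)$ is the trivially weighted $P^9$, of depth $\lceil 9/3\rceil=3$ by Lemma \ref{path}, so your argument yields only $3\le\depth(S/I)\le4$. (Note also that in general $\depth(S/(I,f))$ is $\lceil n/3\rceil$-type, not the first candidate $\lceil\frac{i}{3}\rceil+\lceil\frac{n-i-b}{3}\rceil$, so the ``parallel computation'' producing the first term from $(I,f)$ via Lemma \ref{sum1}(1) is not correct as stated, though it still serves as a lower bound.) What is missing is an independent upper bound matching the first term of the $\min$ — a differently tailored monomial $f'$ with $\depth(S/(I:f'))=\lceil\frac{i}{3}\rceil+\lceil\frac{n-i-b}{3}\rceil$, or a vertex-by-vertex induction exploiting the equality clauses of Lemma \ref{exact}, which is precisely the substance of the cited proof in \cite{ZDCL}; without it the claimed equality in part (3) is not established.
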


		\begin{Lemma}{\em (\cite[Theorems 4.12, 4.17, 4.18 and Proposition 4.13]{ZDCL}}{\em )}
			\label{nontrivialpath2}
			Let $P_{\bf w}^n$  be a non-trivially weighted integrally closed  path as in Lemma \ref{nontrivialpath1}.
Then for all $t\geq 2$, we have the following results:
			\begin{footnotesize}\begin{itemize}
			\item[(1)]  If  $n\le 3$, then  $\depth (S/I(P_{\bf w}^n)^t)=1$;
			\item[(2)]  If  $n=4$, then
			$$
			\depth (S/I(P_{\bf w}^n)^t)\geq
			\begin{cases}
				2,& \text{if ${\bf w}_1,{\bf w}_3>1$ and ${\bf w}_2=1$,}\\
				1,& \text{otherwise.}\\
			\end{cases};
			$$
           \item[(3)]  If $n\geq 5$ and $i=1$, then
			$$
			\depth (S/I(P_{\bf w}^n)^t)\geq
			\begin{cases}
				\max\{\lceil \frac{n-t+1}{3}\rceil,  1\},& \text{if ${\bf w}_3=1$,}\\
				\max\{\lceil \frac{n-t+1}{3}\rceil,  2\},& \text{if ${\bf w}_3>1$.}\\
			\end{cases};
			$$
			\item[(4)]  If $n\geq 5$ and $i>1$, then
			$$
			\depth (S/I(P_{\bf w}^n)^t)\geq
			\begin{cases}
				\max\{\lceil \frac{n-t}{3}\rceil,  2\},& \text{if ${\bf w}_{i+2}>1$,}\\
             \lceil \frac{n-1}{3}\rceil,& \text{if  ${\bf w}_{i+2}=1$ and $i\equiv1(\text{mod}\ 3)$,   $n\equiv2(\text{mod}\ 3)$,    and $t=2$, }\\
				\max\{\lceil \frac{n-t}{3}\rceil,  1\},& \text{otherwise.}\\
			\end{cases}
			$$
		    \end{itemize}
\end{footnotesize}
		\end{Lemma}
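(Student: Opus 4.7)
The plan is to prove Lemma~\ref{nontrivialpath2} by induction on $t \geq 2$, with the base case $t = 1$ supplied by Lemma~\ref{nontrivialpath1} (whose equality in each branch implies the stated inequality). A key preliminary observation is that by Lemma~\ref{integral}, in an integrally closed non-trivially weighted path the non-trivial edges are forced to sit exactly two apart: two adjacent non-trivial edges would create the forbidden $P_{\bf w}^3$, while two non-trivial edges at distance $\geq 3$ would form the forbidden disjoint $P_{\bf w}^2 \sqcup P_{\bf w}^2$. Hence the pattern of weights around the pivot $e_i$ is controlled entirely by $i$ and the single parameter ${\bf w}_{i+2}$, which explains the shape of the case distinction.

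For the small cases (1) and (2) with $n \leq 4$, I would argue directly: for $n \leq 3$ the quotient $S/I(P_{\bf w}^n)^t$ is supported up to one free variable, so $\depth = 1$ follows by Lemma~\ref{sum1}(1) applied to the decomposition $S = \KK[\text{support}] \otimes \KK[\text{free variable}]$; for $n = 4$ one splits $S$ around the middle edge $e_2$, and the dichotomy ${\bf w}_2 = 1$ versus ${\bf w}_2 > 1$ determines whether the two non-trivial edges can be tensor-decoupled, explaining why the bound jumps from $2$ to $1$.

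For the main cases (3) and (4) with $n \geq 5$, the engine is the depth lemma (Lemma~\ref{up}) applied to the short exact sequence
\begin{gather*}
0 \longrightarrow \frac{S}{I(P_{\bf w}^n)^t : f}(-\deg f) \stackrel{\cdot f}{\longrightarrow} \frac{S}{I(P_{\bf w}^n)^t} \longrightarrow \frac{S}{(I(P_{\bf w}^n)^t,\, f)} \longrightarrow 0,
\end{gather*}
with $f = u_i^{\,t-1}$, where $u_i = (x_i x_{i+1})^{{\bf w}_i}$ is the generator attached to the pivot edge. Standard colon computations for edge ideals should reduce $(I^t : f)$ to a product of the form $I(P_{\bf w}^n)\cdot J$ where $J$ is a simpler ideal concentrated on the two sub-paths flanking $e_i$, to which Lemma~\ref{sum1}(2) and the inductive hypothesis on $t-1$ apply, while $(I^t, f)$ effectively severs the path at $e_i$ into two shorter pieces: either trivially weighted (handled by Lemma~\ref{path}) or again integrally closed non-trivially weighted with a smaller pivot (handled by the inductive hypothesis). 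Combining the two bounds via Lemma~\ref{exact} and carefully tracking the ceiling arithmetic yields the four sub-cases, with the split ${\bf w}_{i+2} = 1$ versus ${\bf w}_{i+2} > 1$ arising from whether the residual sub-path still carries a non-trivial edge and therefore inherits the stronger lower bound $2$ from Lemma~\ref{nontrivialpath1}.

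The main obstacle will be the exceptional sub-case of (4) where ${\bf w}_{i+2} = 1$, $i \equiv 1 \pmod 3$, $n \equiv 2 \pmod 3$, and $t = 2$, in which the generic bound $\max\{\lceil (n-t)/3 \rceil, 1\}$ is not tight and one must produce the sharper $\lceil (n-1)/3 \rceil$. The plain choice $f = u_i$ is too lossy here because it wastes one unit in the $\lceil \cdot/3 \rceil$ bookkeeping. I would therefore switch to a refined monomial $f = u_i \cdot u_j$ combining the non-trivial generator with a trivial generator $u_j$ placed so that $j$ lines up with the residue class $i \equiv 1 \pmod 3$, so that the resulting colon ideal splits into two trivially weighted sub-paths whose lengths $\pmod 3$ are compatible, allowing Lemma~\ref{path} to deliver the sharper constant. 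Verifying this will require an ad hoc check for the base of the induction and delicate ceiling-function manipulation, but no new conceptual ingredient beyond the ones listed above.
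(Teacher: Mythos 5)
You are proving a statement that this paper does not prove at all: Lemma \ref{nontrivialpath2} is imported verbatim from \cite[Theorems 4.12, 4.17, 4.18 and Proposition 4.13]{ZDCL}, so there is no in-paper argument to compare with; judged on its own terms, your plan has a genuine gap at its central step. Coloning by $f=u_i^{t-1}$ does not produce ideals of the shape you claim. For a weighted path $(I(P_{\bf w}^n)^t:u_i^{t-1})$ is not a product $I(P_{\bf w}^n)\cdot J$; it is $I(P_{\bf w}^n)$ plus extra ``even-connection''--type generators. For instance, for $P_{\bf w}^5$ with ${\bf w}_1=2$ and all other weights $1$, and $t=2$, one has $(I^2:(x_1x_2)^2)=I+(x_3^2)$, which is neither a product of ideals nor supported on two disjoint sub-paths flanking $e_1$, so Lemma \ref{sum1}(2) (which needs ideals in disjoint variable sets) cannot be invoked. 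Likewise $(I^t,u_i^{t-1})=I^t+(u_i^{t-1})$ does not ``sever'' the path: adding an edge monomial is not deleting a vertex, and ideals of the form $I(P_{\bf w}^n)^t+(\text{edge generators})$ are exactly the hard mixed objects for which this paper has to build dedicated machinery (Lemma \ref{path3}, Theorem \ref{pathsubgraph}); they cannot be dispatched by Lemma \ref{path} or by the inductive hypothesis as an afterthought. The same objection applies to your fix $f=u_iu_j$ for the exceptional sub-case of (4): both $(I^t:f)$ and $(I^t,f)$ remain mixed ideals, and the claimed ``split into two trivially weighted sub-paths'' is unsupported.

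There are also smaller but real defects. For $n=3$ the ideal $I=((x_1x_2)^{{\bf w}_1},x_2x_3)$ involves all three variables, so there is no free variable and no tensor decomposition giving $\depth=1$; the equality requires separately showing $\depth\ge 1$ (e.g.\ $\mathfrak{m}\notin\Ass_S(I^t)$) and $\depth\le 1$ via an explicit colon, and a similar remark applies to $n=4$. The workable route, which is the one used throughout this paper and in the analogous arguments of \cite{ZDCL,ZCLY,MTV}, is to colon and quotient by single variables, or by products $x_{j}x_{j+1}$ of adjacent vertices on trivial edges, exploiting identities such as $(I(P_{\bf w}^n)^t:x_{n-1}x_n)=I(P_{\bf w}^n)^{t-1}$ and $(I(P_{\bf w}^n)^t,x_n)=I(P_{\bf w}^n\backslash x_n)^t+(x_n)$ (Lemma \ref{path1}), and to run towers of short exact sequences; this genuinely deletes vertices, keeps every ideal appearing in the induction inside a controlled family, and is where the exceptional case ${\bf w}_{i+2}=1$, $i\equiv 1$, $n\equiv 2 \pmod 3$, $t=2$ is actually isolated. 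As written, your proposal does not reach that stage.
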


		The following lemma provides some exact formulas for the depth of  powers of the edge ideal of a trivially weighted  cycle.
		
	\begin{Lemma}{\em ( }\cite[Proposition  1.3]{C}, \cite[Theorem 1.1]{MTV} and \cite[Theorems 4.4 and 5.1]{T}{\em )}
			\label{trivialcycle}
			Let  $C_{\bf w}^n$ be a trivially weighted  $n$-cycle, then
			$$
			\depth(S/I(C_{\bf w}^n)^t)=
			\begin{cases}\lceil\frac{n-1}{3}\rceil, &\text{if $t=1$,} \\ \lceil\frac{n-t+1}{3}\rceil, &\text {if $2 \leq t<\lceil\frac{n+1}{2}\rceil$,} \\
			1, & \text {if $n$ is even and $t \geq \frac{n}{2}+1$,} \\
			0, & \text {if $n$ is odd and $t \geq \frac{n+1}{2} $.}
			\end{cases}
			$$
		\end{Lemma}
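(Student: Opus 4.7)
The plan is to induct jointly on $n$ and $t$, repeatedly applying Lemma~\ref{up} to reduce $S/I(C_{\bf w}^n)^t$ to quotients governed by powers of edge ideals of paths; these are then handled by Lemma~\ref{path} together with the depth lemma (Lemma~\ref{exact}) and the tensor formula (Lemma~\ref{sum1}).

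For the base case $t=1$, I would take $f = x_1$. A direct check gives
\[
(I(C_{\bf w}^n), x_1) = (x_1) + I(P^{n-1}), \qquad (I(C_{\bf w}^n) : x_1) = (x_2, x_n) + I(P^{n-3}),
\]
where $P^{n-1}$ sits on $\{x_2,\dots,x_n\}$ and $P^{n-3}$ sits on $\{x_3,\dots,x_{n-1}\}$. Lemmas~\ref{sum1} and~\ref{path} compute the depths of both quotients, and the depth lemma yields the lower bound $\lceil (n-1)/3 \rceil$. The matching upper bound is most cleanly obtained from Hochster's formula applied to the independence complex of $C^n$; alternatively, a second application of Lemma~\ref{up} to a carefully chosen monomial (e.g.\ a product $x_2 x_4 \cdots$ along a near-perfect matching) produces a colon whose depth realizes $\lceil (n-1)/3 \rceil$.

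For the middle range $2 \le t < \lceil (n+1)/2 \rceil$, I would induct on $t$ with $f = (x_1 x_2)^{t-1}$, say. The colon $(I(C_{\bf w}^n)^t : f)$ then reduces to the edge ideal of the path obtained by breaking $C^n$ at $e_1$ (plus extra linear generators handled by Lemma~\ref{sum1}), while $(I(C_{\bf w}^n)^t, f)$ admits a further collapse to $I(C_{\bf w}^n)^{t-1}$ after a second colon step. The inductive lower bound $\lceil (n-t+1)/3 \rceil$ then emerges from Lemma~\ref{exact}, and is matched by constructing an explicit witness monomial (a power of a product of alternating vertices) whose colon with $I(C_{\bf w}^n)^t$ has depth exactly $\lceil (n-t+1)/3 \rceil$.

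For the stable range $t \ge \lceil (n+1)/2 \rceil$: if $n$ is even, the cycle $C^n$ is bipartite, so $I(C_{\bf w}^n)^t = I(C_{\bf w}^n)^{(t)}$ and the associated primes of $S/I(C_{\bf w}^n)^t$ are precisely the minimal primes of $I(C_{\bf w}^n)$ (all of height $n/2$); a standard Cohen--Macaulay fibre argument then pins $\depth(S/I(C_{\bf w}^n)^t) = 1$. If $n$ is odd, one exhibits an explicit monomial $h \notin I(C_{\bf w}^n)^t$ with $(I(C_{\bf w}^n)^t : h) = \mm$, placing $\mm \in \Ass(S/I(C_{\bf w}^n)^t)$ and forcing the depth to $0$; the construction of $h$ exploits the odd closed walk in $C^n$ and first succeeds precisely when $t \ge (n+1)/2$. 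The principal obstacle across the middle and stable ranges is engineering these witness monomials and verifying they lie outside $I(C_{\bf w}^n)^t$, which requires careful case analysis on $n$ modulo $3$ and on the parity of $n$.
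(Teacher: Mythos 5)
Note first that the paper itself gives no argument for Lemma \ref{trivialcycle}: it is imported from \cite{C}, \cite{MTV} and \cite{T}, so your proposal has to stand on its own, and while its overall template (colon ideals plus Lemma \ref{exact} for lower bounds, Lemma \ref{up} with witness monomials for upper bounds, an associated-prime argument for depth zero) is indeed the template of those sources and of Section \ref{sec:power} here, the sketch has gaps at exactly the points where the real work lies. In the range $2\le t<\lceil\frac{n+1}{2}\rceil$ your reduction is incorrect as stated: $(I(C^n)^t:(x_1x_2)^{t-1})$ is not ``the edge ideal of the path obtained by breaking $C^n$ at $e_1$ plus linear generators''; an even-connection (Banerjee-type) computation gives $(I(C^n)^t:(x_1x_2)^{t-1})=I(C^n)+(x_3x_n)$, i.e.\ a cycle plus a chord and no linear forms at all. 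Moreover the naive induction on the cycle alone does not close: this is precisely why \cite{MTV} (and this paper in the weighted setting, via Theorems \ref{pathsubgraph} and \ref{cyclesubgraph} and Lemma \ref{cycle3}) must strengthen the inductive statement to ideals of the form $I(C^n)^t+I(H)$ for subgraphs $H$, since the colon and sum ideals that arise are of this mixed type. Finally, the matching upper bound — an explicit $f\notin I^t$ with $\depth(S/(I^t:f))=\lceil\frac{n-t+1}{3}\rceil$, verified to lie outside $I^t$ and with its colon computed — is the technical heart of \cite{MTV} and is only asserted, not constructed, in your proposal.

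In the stable range with $n$ even, the claim that the associated primes are ``all of height $n/2$'' is false: even cycles have minimal vertex covers of several sizes (e.g.\ $\{x_1,x_2,x_4,x_5\}$ for $C^6$). What bipartiteness actually gives is $\Ass(S/I^t)=\Min(I)$, hence $\mm\notin\Ass(S/I^t)$ and $\depth(S/I^t)\ge 1$; that part is fine. But the ``Cohen--Macaulay fibre argument'' only identifies the Brodmann limit $n-\ell(I)=1$ for $t\gg0$; it does not show $\depth(S/I^t)\le 1$ for \emph{every} $t\ge\frac{n}{2}+1$, which is exactly the content of \cite[Theorems 4.4 and 5.1]{T} and is obtained there (and in the weighted analogue, Theorems \ref{even1} and \ref{even2} of this paper) by an explicit computation of $(I^t:f)$ for a concrete monomial $f$. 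For $n$ odd your plan is the right one, but the witness monomial and the verification that it lies outside $I^t$ while its colon is $\mm$ — compare $f=(x_1x_2)^{t-\frac{n-1}{2}}x_3\cdots x_n$ as in Theorem \ref{odd-cycle} — are the entire content of that case and are left unconstructed. So the proposal is a reasonable roadmap, but it does not yet constitute a proof.
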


	Next, we will  compute the depth of  the edge ideal of a non-trivially weighted integrally closed $n$-cycle.

		\begin{Theorem}\label{cycle}
 Let $C_{\bf w}^n$ be a non-trivially weighted integrally closed $n$-cycle, then
			$$\depth (S/I(C_{\bf w}^n))=\lceil \frac{n-1}{3} \rceil.$$
		\end{Theorem}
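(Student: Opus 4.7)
The plan is to prove the formula by strong induction on the excess weight $T:=\sum_{i=1}^{n}({\bf w}_i-1)\ge 0$, working over the slightly broader family of all integrally closed $n$-cycles. The base case $T=0$ is the trivially weighted cycle, handled directly by Lemma \ref{trivialcycle} at $t=1$, which yields $\lceil(n-1)/3\rceil$.

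For the inductive step, suppose $T\ge 1$, so the standing symmetry convention gives ${\bf w}_1\ge 2$. By Lemma \ref{integral}, the prohibition on an induced $P_{\bf w}^3$ with all non-trivial edges forces the two edges $e_2=x_2x_3$ and $e_n=x_nx_1$ neighboring $e_1$ to have trivial weight. Write $I:=I(C_{\bf w}^n)$ and put
\[
f:=(x_1x_2)^{{\bf w}_1-1},
\]
which lies outside $I$ since its exponents on $x_1,x_2$ are strictly less than ${\bf w}_1$. The idea is to apply Lemma \ref{up} to the short exact sequence \eqref{eqn: equality1} built from this $f$, after computing $(I:f)$ and $(I,f)$ explicitly.

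Dividing each generator of $I$ by its $\gcd$ with $f$ produces $x_1x_2$ (from $e_1$), $x_3$ (from $e_2$), $x_n$ (from $e_n$), and leaves $e_3,\dots,e_{n-1}$ unchanged. Since $x_3$ and $x_n$ render $e_3$ and $e_{n-1}$ redundant,
\[
(I:f)=(x_3,\,x_n,\,x_1x_2)+I(P),
\]
where $P$ is the induced weighted subpath of $C_{\bf w}^n$ on $\{x_4,\dots,x_{n-1}\}$. By Corollary \ref{cycleinteg} combined with the symmetry convention, all non-trivial edges of $C_{\bf w}^n$ lie in $\{e_1,e_3,e_5\}$, and $e_5$ can be non-trivial only when $n=6$, in which case $x_n=x_6$ has already been eliminated; consequently $P$ is trivially weighted. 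Lemma \ref{sum1} together with Lemma \ref{path} at $t=1$ then give
\[
\depth(S/(I:f))=1+\lceil (n-4)/3\rceil=\lceil (n-1)/3\rceil.
\]
For $(I,f)$, observe that $x_1^{{\bf w}_1}x_2^{{\bf w}_1}=(x_1x_2)\,f$ is redundant, so $(I,f)=I(C_{{\bf w}'}^n)$ where ${\bf w}'$ agrees with ${\bf w}$ except ${\bf w}'_1={\bf w}_1-1$. The cycle $C_{{\bf w}'}^n$ is still integrally closed (decreasing a single weight can only destroy, never create, the forbidden induced subgraphs of Lemma \ref{integral}) and has excess $T-1$; the inductive hypothesis therefore gives $\depth(S/(I,f))=\lceil(n-1)/3\rceil$. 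The upper bound $\depth(S/I)\le\depth(S/(I:f))$ from Lemma \ref{up} and the lower bound $\depth(S/I)\ge\min\{\depth(S/(I:f)),\depth(S/(I,f))\}$ from Lemma \ref{exact} applied to \eqref{eqn: equality1} now close the induction.

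The main technical point is to verify that $(I:f)$ collapses cleanly to the displayed form in every sub-case of Corollary \ref{cycleinteg}, including the borderline small $n$ and the $n=6$ configuration with three non-trivial edges where $e_5$ could a priori survive on $P$; the integral-closure rigidity forcing non-trivial edges to be well-separated on the cycle is precisely what makes both the elimination step and the triviality of $P$ work uniformly.
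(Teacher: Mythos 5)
Your route is genuinely different from the paper's: you induct on the total excess weight $T=\sum_i({\bf w}_i-1)$, colon by $f=(x_1x_2)^{{\bf w}_1-1}$ so that $(I,f)=I(C^n_{{\bf w}'})$ is again an integrally closed cycle with smaller excess, and observe that $(I:f)$ collapses to $(x_3,x_n,x_1x_2)+I(P)$ with $P$ a trivially weighted path; the paper instead colons by the vertices $x_2$ and then $x_1$ and has to invoke the depth formulas for non-trivially weighted paths (Lemma \ref{nontrivialpath1}), treating the three-non-trivial-edge $6$-cycle separately. Your reduction only needs the trivially weighted cycle and path formulas (Lemmas \ref{trivialcycle} and \ref{path}) plus Lemmas \ref{up} and \ref{exact}, which is a real economy, and for $n\ge 4$ the key steps check out: the induced $P^3_{\bf w}$ obstruction does force ${\bf w}_2={\bf w}_n=1$, the colon ideal is as you claim, integral closure is preserved when ${\bf w}_1$ drops by one, and the two bounds pinch $\depth(S/I)$ to $\lceil (n-1)/3\rceil$.

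There is, however, a concrete gap at $n=3$. Your structural claim that both neighbours $e_2$ and $e_n$ of $e_1$ must be trivial fails for the triangle: $C^3$ contains no induced path on three vertices, so Lemma \ref{integral} only forbids all three edges of the triangle being non-trivial, and an integrally closed $C^3_{\bf w}$ may have two (necessarily adjacent) non-trivial edges --- exactly the configuration the paper handles in the $n=3$ case of Theorem \ref{cycle} and again in Theorem \ref{3-cycle}. As written, your inductive step does not cover these triangles, and your displayed formula for $(I:f)$ is not justified for them, since the generator coming from the second non-trivial edge is not simply a variable. The gap is repairable: after a reflection you may assume ${\bf w}_2=1$, and then the generator coming from $e_3=x_3x_1$ is a multiple of $x_3$, so $(I:f)=(x_1x_2,x_3)$ still has depth $1=\lceil 2/3\rceil$ and the induction closes --- but this case must be argued separately, not subsumed under the general claim. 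You should also make explicit the borderline cases $n=4,5$, which you flag but do not verify: there the residual path $P$ has no edges, so Lemma \ref{sum1} and Lemma \ref{path} do not apply verbatim and $\depth(S/(I:f))$ must be computed directly (it is $1$ and $2$, respectively, which still equals $\lceil(n-1)/3\rceil$).
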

		\begin{proof}
			Let $I=I(C_{\bf w}^n)$ and ${\bf w}_1=\max\{{\bf w}_1,\ldots,{\bf w}_{n}\}$  by symmetry. If $n=3$, then ${\bf w}_2=1$ or ${\bf w}_3=1$ by Lemma \ref{integral}. We can assume ${\bf w}_3=1$ by symmetry. It is easy to see that  $(I:x_3)=(x_2^{{\bf w}_2}x_3^{{\bf w}_2-1},x_1)$ and $(I,x_3)=(x_1^{{\bf w}_1}x_2^{{\bf w}_1},x_3)$. Thus
			$\depth(S/(I:x_3))=\depth(S/(I,x_3))=1$. Applying  Lemma \ref{exact} to the following exact sequence
			\begin{gather*}
					0  \longrightarrow  \frac{S}{I : x_{3}}(-1)   \stackrel{ \cdot x_3} \longrightarrow   \frac{S}{I}  \longrightarrow  \frac{S}{(I,x_{3})}  \longrightarrow  0,
			\end{gather*}
			we can observe that $\depth (S/I)=1$.
			
		In the following, we assume that  $n\geq 4$.  We distinguish between the following two cases:
			
	 {\it Case $1$}:  If $C_{\bf w}^n$ has at most two edges with non-trivial weights, then we can assume  by symmetry and Lemma \ref{integral} that ${\bf w}_1\geq {\bf w}_3$ and ${\bf w}_i=1$ for all $i\neq 1,3$. It is easy to see that  $(I,x_2)=I(C_{\bf w}^{n}\backslash {x_2})+(x_2)$  and $(I:x_2)=I(P_{\bf w}^{n-2})+(x_1^{{\bf w}_1}x_2^{{\bf w}_1-1},x_3)$, where $P_{\bf w}^{n-2}$ is an induced subgraph of $C_{\bf w}^n$ on the set $\{x_1,\widehat{x_2},\widehat{x_3},x_4,\ldots,x_n\}$ and $\widehat{x_2}$ denotes the element $x_2$ omitted from  $\{x_1,x_2,\ldots,x_n\}$. Furthermore, $(I:x_2x_1)=I(P_{\bf w}^{n-4})+(x_1^{{\bf w}_1-1}x_2^{{\bf w}_1-1},x_3,x_n)$ and $((I:x_2),x_1)=I(P_{\bf w}^{n-3})+(x_1,x_3)$, where $P_{\bf w}^{n-4}$ and $P_{\bf w}^{n-3}$ are  induced subgraphs of $C_{\bf w}^n$ on the sets $\{x_4,\ldots,x_{n-1}\}$ and $\{x_4,\ldots,x_n\}$, respectively.
			It follows from Lemmas  \ref{sum1} and \ref{path} that $\depth(S/(I:x_2x_1))=\lceil \frac{n-1}{3} \rceil$ and $\depth(S/((I:x_2),x_1))=\lceil \frac{n}{3} \rceil$. We consider the following two subcases:
			
			(i) If ${\bf w}_3=1$, then $\depth(S/(I,x_2))=\lceil \frac{n-1}{3} \rceil$ by Lemma \ref{path}. It follows that  $\depth (S/I)=\lceil \frac{n-1}{3} \rceil$ by applying Lemma \ref{exact} to
the following short exact sequences
		    \begin{gather}\label{eqn: equality2}
		    	\begin{matrix}
		    		0 & \longrightarrow & \frac{S}{I:x_{2}}(-1)  & \stackrel{ \cdot x_2} \longrightarrow  & \frac{S}{I} & \longrightarrow & \frac{S}{(I,x_{2})} & \longrightarrow & 0,\\
		    		0 & \longrightarrow & \frac{S}{I:x_{2}x_{1}}(-1)  & \stackrel{ \cdot x_1} \longrightarrow  & \frac{S}{I:x_{2}} & \longrightarrow & \frac{S}{((I:x_{2}),x_{1})} & \longrightarrow & 0.
		    	\end{matrix}
		    \end{gather}

		(ii) If ${\bf w}_3\geq 2$, then it follows from Lemmas \ref{sum1} and \ref{nontrivialpath1} that
\[
\depth(S/(I,x_2))=\begin{cases}\lceil\frac{n-1}{3}\rceil, &\text{if $n=4$,} \\
 \lceil\frac{n}{3}\rceil, &\text {if $n\ge5$.}
			\end{cases}
\]
Using Lemma \ref{exact} to the  short exact sequences (\ref{eqn: equality2}),
		    we get  $\depth (S/I)=\lceil \frac{n-1}{3} \rceil$.
		
		 {\it Case $2$}:  If $C_{\bf w}^n$ has three edges with non-trivial weights, then $n=6$ by Corollary \ref{cycleinteg}. In this case, we can assume  by symmetry that ${\bf w}_1\geq {\bf w}_3\geq {\bf w}_5\geq 2$ and ${\bf w}_2={\bf w}_4={\bf w}_6=1$.  It is easy to see that  $(I:x_2)=I(P_{\bf w}^{4})+(x_1^{{\bf w}_1}x_2^{{\bf w}_1-1},x_3)$ and $(I,x_2)=I(P_{\bf w}^{5})+(x_2)$, where $P_{\bf w}^{4}$ and $P_{\bf w}^{5}$ are  induced subgraphs of $C_{\bf w}^6$ on the sets $\{x_1,x_4,x_5,x_6\}$ and $\{x_1,x_3,x_4,x_5,x_6\}$, respectively. Furthermore, $(I:x_2x_1)=(x_1^{{\bf w}_1-1}x_2^{{\bf w}_1-1},x_3,x_4x_5,x_6)$ and $((I:x_2),x_1)=(x_1,x_3,x_4x_5,(x_5x_6)^{{\bf w}_5})$.
		    It follows from Lemmas \ref{sum1}, \ref{path} and \ref{nontrivialpath1} that $\depth(S/(I,x_2))=\depth(S/(I:x_2x_1))=\depth(S/((I:x_2),x_1))=2$.  Applying  Lemma \ref{exact} to the above exact sequences (\ref{eqn: equality2}), we get $\depth (S/I)=2$.
		\end{proof}

		\section{The depth of powers of the edge ideal}
		\label{sec:power}
		In this section, we will give precise formulas for the  depth of  powers of the edge ideal of  a non-trivially integrally closed  $n$-cycle. Thus, in the rest of  this section, we let $C_{\bf w}^n$ be a non-trivially weighted integrally closed  $n$-cycle.

\subsection{$C_{\bf w}^n$ has at least two edges with non-trivial weights}
In this subsection  we  assume that ${\bf w}_1\ge {\bf w}_3\geq 2$.
First, we  consider  the case where $C_{\bf w}^n$ has three edges with non-trivial weights. 	

\begin{Theorem}\label{6-cycle}
			 If $C_{\bf w}^n$ has three edges with non-trivial weights, then
\[
\depth (S/I(C_{\bf w}^n)^t)=2  \text{\ for all\ } t\ge 1.
\]		
\end{Theorem}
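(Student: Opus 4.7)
By Corollary~\ref{cycleinteg}, the hypothesis of three non-trivial edges forces $n=6$, and by cyclic symmetry I may assume ${\bf w}_1\ge{\bf w}_3\ge{\bf w}_5\ge 2$ with ${\bf w}_2={\bf w}_4={\bf w}_6=1$; write $I=I(C_{\bf w}^6)$. The case $t=1$ is Theorem~\ref{cycle}. For $t\ge 2$ I plan to prove the two bounds $\depth(S/I^t)\le 2$ and $\depth(S/I^t)\ge 2$ separately: an explicit colon will realize the upper bound, while iterating the depth lemma with a carefully chosen variable will yield the lower bound.

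For the upper bound, set $f=e_2^{t-1}=(x_2x_3)^{t-1}$, which is not in $I^t$ by degree. I claim $(I^t:f)=I$; Lemma~\ref{up} and Theorem~\ref{cycle} then give $\depth(S/I^t)\le\depth(S/I)=2$. The inclusion $I\subseteq(I^t:f)$ is immediate since $e_j\cdot e_2^{t-1}\in I\cdot I^{t-1}\subseteq I^t$ for every $j$. For the reverse, let $u=\prod_{i=1}^6 e_i^{a_i}$ be a generator of $I^t$, set $y:=u/\gcd(u,f)$, and suppose for contradiction that no $e_i$ divides $y$. Then $e_4\nmid y$ and $e_6\nmid y$ force $a_4=a_6=0$ together with ($a_3=0$ or $a_5=0$) and ($a_1=0$ or $a_5=0$). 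If $a_5\ge 1$, then $a_1=a_3=0$ and $y=e_5^{a_5}$, which is divisible by $e_5$, a contradiction. Otherwise $a_5=0$ and $a_1+a_2+a_3=t$; the explicit exponents $\nu_2(y)=\max(0,({\bf w}_1-1)a_1-a_3+1)$ and $\nu_3(y)=\max(0,({\bf w}_3-1)a_3-a_1+1)$, combined with ${\bf w}_1,{\bf w}_3\ge 2$, force at least one of $e_1,e_2,e_3$ to divide $y$, again a contradiction.

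For the lower bound, I apply Lemma~\ref{exact}(1) to the short exact sequence $0\to S/(I^t:x_2)(-1)\to S/I^t\to S/(I^t,x_2)\to 0$ and show both outer quotients have depth $\ge 2$. The quotient $S/(I^t,x_2)$ is isomorphic to $S'/I(P_{\bf w}^5)^t$, where $S'=\KK[x_1,x_3,x_4,x_5,x_6]$ and $P_{\bf w}^5$ is the induced $5$-path $x_3{-}x_4{-}x_5{-}x_6{-}x_1$ bearing non-trivial weights ${\bf w}_3,{\bf w}_5$ at its first and third edges, so Lemma~\ref{nontrivialpath2}(3) (with $i=1$, ${\bf w}_{i+2}={\bf w}_5>1$) yields depth $\ge 2$. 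For $(I^t:x_2)$, I first establish the identity $(I^t:x_2)=(I:x_2)\cdot I^{t-1}$ by writing $I=(e_3,e_4,e_5,e_6)+x_2\cdot(x_1^{{\bf w}_1}x_2^{{\bf w}_1-1},x_3)$ and expanding; this enables iterating short exact sequences with $x_1$ and then with $x_3$, and after reducing modulo $(x_1,x_3)$ the surviving ideal simplifies to $(x_4x_5,x_5^{{\bf w}_5}x_6^{{\bf w}_5})^t$ in $\KK[x_2,x_4,x_5,x_6]$, of depth $2$ by Lemmas~\ref{sum1}(1) and~\ref{nontrivialpath2}(1). The hard part will be the bookkeeping in this chain of reductions: the product $(I:x_2)\cdot I^{t-1}$ admits no tensor decomposition in disjoint variables, so each intermediate colon---notably $((I^t:x_2):x_1)=(I^t:x_1x_2)$ and $(I^t:x_1x_2x_3)$---must be identified explicitly, and the inductive hypothesis $\depth(S/I^{t-1})=2$ together with Lemmas~\ref{sum1}, \ref{nontrivialpath1}, and~\ref{nontrivialpath2} will be needed to verify each resulting quotient has depth $\ge 2$, so that Lemma~\ref{exact} propagates the bound back up the chain.
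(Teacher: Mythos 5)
Your reduction to $n=6$, the base case $t=1$, and the upper bound are sound, and the upper bound is a genuinely different route from the paper's: you compute the colon $(I^t:(x_2x_3)^{t-1})=I$ explicitly and invoke Lemma~\ref{up} together with Theorem~\ref{cycle}, whereas the paper never isolates an upper-bound step at all --- it gets $\depth(S/I^t)\le 2$ from $\depth(S/I^t)\le\depth(S/(I^t:x_1))$ after its chain of exact sequences, driven by the identity $(I^t:x_1x_6)=I^{t-1}$ (colon with the endpoints of the trivial edge $x_6x_1$, \cite[Lemmas 4.1 and 4.3]{ZCLY}) and induction on $t$. Your case analysis for $(I^t:(x_2x_3)^{t-1})\subseteq I$ is terse but correct and easily completed: after reducing to $a_4=a_5=a_6=0$, the inequalities $({\bf w}_1-1)a_1\ge a_3$ and $({\bf w}_3-1)a_3\ge a_1$ cannot both fail, so either $e_2$ divides $y$, or (when exactly one fails) $e_1$ resp.\ $e_3$ divides $y$.

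The genuine gap is in the lower bound. After the correct estimate $\depth(S/(I^t,x_2))\ge 2$, everything hinges on $\depth(S/(I^t:x_2))\ge 2$, and this you do not prove: you only announce a chain of short exact sequences with $x_1$ and then $x_3$ and explicitly defer ``the bookkeeping,'' i.e.\ the identification of $(I^t:x_1x_2)$, $((I^t:x_1x_2),x_3)$, $(((I^t:x_2),x_1):x_3)$, and the verification that each resulting quotient has depth at least $2$. That is precisely the hard content of the theorem, and none of the quoted lemmas covers these ideals: the factorization $(I^t:x_2)=(I:x_2)I^{t-1}$, though true, yields no depth information because the two factors share variables (Lemma~\ref{sum1}(2) is unavailable), and $(I^t:x_1x_2)$ is \emph{not} $I^{t-1}$ because ${\bf w}_1\ge 2$, so the inductive hypothesis does not plug in where your chain needs it; only the terminal quotient $((I^t:x_2),x_1,x_3)=((x_4x_5,(x_5x_6)^{{\bf w}_5})^t,x_1,x_3)$ is actually identified and bounded. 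Note also that your ordering discards the one identity that makes induction engage: since ${\bf w}_2=1$ one has $(I^t:x_2x_3)=I^{t-1}$, so coloning with $x_3$ immediately after $x_2$ (or following the paper, which uses $x_1$ and $x_6$ together with the auxiliary sequence $0\to S/((I^t,x_6):x_1)\to S/(I^t,x_6)\to S/((I^t,x_6),x_1)\to 0$) produces a term of depth $2$ by induction and leaves only quotients of weighted-path type, which Lemmas~\ref{sum1} and~\ref{nontrivialpath2} handle. As written, the lower-bound half of your argument is a plan rather than a proof.
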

	
	    \begin{proof}
	    	Let $I=I(C_{\bf w}^n)$. By  Corollary \ref{cycleinteg}, we get $n=6$.   By symmetry we can assume that ${\bf w}_1\geq {\bf w}_3\geq {\bf w}_5\geq 2$ and ${\bf w}_2={\bf w}_4={\bf w}_6=1$.
Now we prove  the statements by induction on $t$ with the case $t=1$  verified in  Theorem \ref{cycle}.  Thus, in the following, we can assume that $t\ge 2$.
By \cite[Lemmas 4.1 and 4.3]{ZCLY}, we see that $(I^t,x_1)=I(C_{\bf w}^6\backslash x_1)^t+(x_1)$, $(I^t:x_1x_6)=I^{t-1}$, $((I^t:x_{1}),x_{6})=((I^t,x_{6}):x_{1})$, $(I^t,x_6)=I(C_{\bf w}^6\backslash x_6)^t+(x_6)$ and $((I^t,x_6),x_1)=I(P_{\bf w}^4)^t+(x_1,x_6)$, where $P_{\bf w}^{4}$ is the   induced subgraph of $C_{\bf w}^6$ on  $\{x_2,x_3,x_4,x_5\}$. It follows from the induction and Lemmas \ref{sum1} and \ref{nontrivialpath2} that $\depth(S/(I^t,x_1))\ge 2$, $\depth(S/(I^t:x_1x_6))=2$, $\depth(S/(I^t,x_6))\ge 2$ and $\depth(S/((I^t,x_6),x_1))=1$.
	    	Thus, by  Lemma \ref{exact} and  the following short exact sequences
	    	\begin{gather*}
	    		\begin{matrix}
	    			0 & \longrightarrow & \frac{S}{I^t:x_{1}}(-1)  & \stackrel{ \cdot x_1} \longrightarrow  & \frac{S}{I^t} & \longrightarrow & \frac{S}{(I^t,x_{1})} & \longrightarrow & 0,\\
	    			0 & \longrightarrow & \frac{S}{I^t:x_{1}x_{6}}(-1)  & \stackrel{ \cdot x_6} \longrightarrow  & \frac{S}{I^t:x_{1}} & \longrightarrow & \frac{S}{((I^t:x_{1}),x_{6})} & \longrightarrow & 0,\\
	    			0 & \longrightarrow & \frac{S}{((I^t:x_{1}),x_{6})}(-1)  & \stackrel{ \cdot x_1} \longrightarrow  & \frac{S}{(I^t,x_{6})} & \longrightarrow & \frac{S}{((I^t,x_{6}),x_{1})} & \longrightarrow & 0,\\
	    		\end{matrix}
	    	\end{gather*}
	    	we get  $\depth (S/I^t)=2$.
	    \end{proof}

Next, we consider  the case where $C_{\bf w}^n$ has  two  edges with non-trivial weights. First we consider the case where $n=3$.
\begin{Theorem}\label{3-cycle}
		If $C_{\bf w}^3$ has two edges with non-trivial weights, then
\[
\depth(S/I(C_{\bf w}^3)^t)=1  \text{\ for all\ } t\ge 1.
\]		
\end{Theorem}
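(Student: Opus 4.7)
The plan is to combine an elementary dimension bound with an inductive short-exact-sequence argument. By Lemma~\ref{integral} and the symmetry convention, we may assume ${\bf w}_1 \geq {\bf w}_3 \geq 2$ and ${\bf w}_2 = 1$, so that $I := I(C_{\bf w}^3) = (a,b,c)$ with $a = x_1^{{\bf w}_1} x_2^{{\bf w}_1}$, $b = x_2 x_3$, and $c = x_1^{{\bf w}_3} x_3^{{\bf w}_3}$. I would induct on $t$, the base $t=1$ being given by Theorem~\ref{cycle}. Since $\sqrt{I} = (x_1 x_2, x_2 x_3, x_1 x_3)$ has height two, $\dim(S/I^t) = 1$, which immediately forces the upper bound $\depth(S/I^t) \leq 1$ for every $t$.

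For the lower bound in the inductive step, I would apply Lemma~\ref{exact} twice in succession, using $x_2$ first and then $x_3$. The three ancillary ideals are easy to describe. First, $(I^t, x_2) = (c^t, x_2)$, so $S/(I^t, x_2) \cong \KK[x_1, x_3]/((x_1 x_3)^{t {\bf w}_3})$ has depth $1$. Second, since $(I^t, x_3) = (a^t, x_3)$ and $((J, x_3) : x_2) = ((J : x_2), x_3)$ for any monomial ideal $J$, one obtains $((I^t : x_2), x_3) = (x_1^{t {\bf w}_1} x_2^{t {\bf w}_1 - 1}, x_3)$, whose quotient again has depth $1$. Third, and this is the key claim, $(I^t : x_2 x_3) = I^{t-1}$, so by the induction hypothesis $\depth(S/(I^t : x_2 x_3)) = 1$. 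Feeding these into the short exact sequence $0 \to S/(I^t : x_2 x_3)(-1) \to S/(I^t : x_2) \to S/((I^t : x_2), x_3) \to 0$ yields $\depth(S/(I^t : x_2)) \geq 1$ by Lemma~\ref{exact}, and then the sequence $0 \to S/(I^t : x_2)(-1) \to S/I^t \to S/(I^t, x_2) \to 0$ yields $\depth(S/I^t) \geq 1$, matching the upper bound.

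The main obstacle is the colon identity $(I^t : x_2 x_3) = I^{t-1}$. The inclusion $\supseteq$ is clear since $b = x_2 x_3 \in I$. For the converse, let $f$ be a monomial with $fb \in I^t$, so some minimal generator $a^i b^j c^k$ with $i + j + k = t$ divides $fb$. When $j \geq 1$, dividing by $b$ immediately gives $a^i b^{j-1} c^k \mid f$, hence $f \in I^{t-1}$. The delicate case is $j = 0$ with $i, k \geq 1$: the divisibility $a^i c^k \mid fb$ only yields the exponent bounds $f_{x_2} \geq i {\bf w}_1 - 1$ and $f_{x_3} \geq k {\bf w}_3 - 1$, and one must verify that $a^{i-1} b c^{k-1}$ divides $f$. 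The $x_2$-exponent check reduces to ${\bf w}_1 - 1 \geq 1$ and the $x_3$-exponent check to ${\bf w}_3 - 1 \geq 1$, so both hypotheses ${\bf w}_1, {\bf w}_3 \geq 2$ are essential at precisely this point. The boundary subcases $(i,j,k) = (t,0,0)$ and $(0,0,t)$ are handled analogously, using $a^{t-1} \mid f$ and $c^{t-1} \mid f$ respectively.
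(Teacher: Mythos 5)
Your proposal is correct and follows essentially the same route as the paper: induction on $t$ with base case Theorem~\ref{cycle}, the same two short exact sequences obtained by coloning out $x_2$ and then $x_3$, and the same three auxiliary identities $(I^t,x_2)=((x_1x_3)^{t{\bf w}_3},x_2)$, $((I^t:x_2),x_3)=(x_1^{t{\bf w}_1}x_2^{t{\bf w}_1-1},x_3)$ and $(I^t:x_2x_3)=I^{t-1}$. The only (welcome) differences are cosmetic: you prove the colon identity directly (correctly isolating where ${\bf w}_1,{\bf w}_3\ge 2$ is needed) where the paper cites the proof of \cite[Theorem 4.2]{Zh1}, and you get the upper bound from $\depth\le\dim(S/I^t)=1$ rather than from the equality case of Lemma~\ref{exact}.
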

		  \begin{proof}
	    Let $I=I(C_{\bf w}^3)$, ${\bf w}_1\geq {\bf w}_3\geq 2$ and ${\bf w}_2=1$. We prove  the statements by induction on $t$ with the case $t=1$  verified in  Theorem \ref{cycle}.  Thus, in the following, we can assume that $t\ge 2$.

From the proof of \cite[Theorem 4.2]{Zh1}, we see that
$(I^t: x_2x_3)=I^{t-1}$,  $((I^t : x_2),x_3)=(x_1^{{\bf w}_1t}x_2^{{\bf w}_1t-1},x_3)$,  and $(I^t,x_2)=((x_1x_3)^{{\bf w}_3t},x_2)$. By  induction, we have  $\depth(S/((I^t : x_2),x_3))=\depth(S/(I^t ,x_2))=\depth(S/(I^t : x_2x_3))=1$.
 Using Lemma \ref{exact} to the following short exact sequences
	    \begin{gather*}
	    	\hspace{1cm}\begin{matrix}
	    		0 & \longrightarrow & \frac{S}{I^t : x_2}(-1)  & \stackrel{ \cdot x_2} \longrightarrow  & \frac{S}{I^t} & \longrightarrow & \frac{S}{(I^t,x_2)} & \longrightarrow & 0,\\
	    		0 & \longrightarrow & \frac{S}{I^t : x_2x_3}(-1)  & \stackrel{ \cdot x_3} \longrightarrow  & \frac{S}{I^t : x_2} & \longrightarrow & \frac{S}{((I^t : x_2),x_3)} & \longrightarrow & 0,
	    	\end{matrix}
	    \end{gather*}
	  we can observe that $\depth (S/I^t)=1$.
	    \end{proof}

 Next, we consider the case where  $n\geq 4$.
  \begin{Theorem}\label{geq}
	   	Let $C_{\bf w}^n$ be an  $n$-cycle with $n\geq 4$. If $C_{\bf w}^n$ has two edges with non-trivial weights, then
 \[
\depth (S/I(C_{\bf w}^n)^t)\geq \max\{\lceil \frac{n-t}{3}\rceil,  1\}  \text{\ for all\ } t\ge 1.
\]		
 \end{Theorem}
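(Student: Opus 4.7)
The plan is to prove the bound by induction on $t$, with the base case $t=1$ supplied by Theorem~\ref{cycle} (which gives $\depth(S/I)=\lceil(n-1)/3\rceil\geq\max\{\lceil(n-1)/3\rceil,1\}$ for $n\geq 4$). For the inductive step $t\geq 2$, write $I=I(C_{\bf w}^n)$ and, by symmetry and Lemma~\ref{integral}, assume ${\bf w}_1\geq{\bf w}_3\geq 2$ and ${\bf w}_i=1$ for $i\neq 1,3$. The critical choice is to localize at the vertex $x_2$, which sits on both the non-trivial edge $e_1=x_1x_2$ and the trivial edge $e_2=x_2x_3$: removing $x_2$ from the cycle leaves a path in which the surviving non-trivial edge $e_3$ is an endpoint edge, while the presence of the trivial edge $e_2$ unlocks the standard ``colon with a trivial edge drops a power'' identity.

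The first reduction uses the short exact sequence from Lemma~\ref{up},
\[
0\to S/(I^t:x_2)(-1)\xrightarrow{\cdot x_2}S/I^t\to S/(I^t,x_2)\to 0,
\]
and Lemma~\ref{exact}(1) to bound $\depth(S/I^t)$ below by $\min\{\depth(S/(I^t:x_2)),\depth(S/(I^t,x_2))\}$. Since $(I^t,x_2)=I(C_{\bf w}^n\setminus x_2)^t+(x_2)$ and $C_{\bf w}^n\setminus x_2$ is the path $x_3-x_4-\cdots-x_n-x_1$ on $n-1$ vertices whose only non-trivial edge is $e_3$ at an endpoint (with trivial third edge when $n\geq 6$), Lemma~\ref{sum1}(1) combined with Lemma~\ref{nontrivialpath2}(3) yields $\depth(S/(I^t,x_2))\geq\max\{\lceil(n-t)/3\rceil,1\}$; the small cases $n\in\{4,5\}$ only require $\depth\geq 1$, which follows from parts (1)-(2) of Lemma~\ref{nontrivialpath2} and matches the target bound since $\max\{\lceil(n-t)/3\rceil,1\}=1$ whenever $n\leq 5$ and $t\geq 2$.

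For $(I^t:x_2)$ the plan is to chain a second short exact sequence against the trivial edge $e_2=x_2x_3$:
\[
0\to S/(I^t:x_2x_3)(-1)\xrightarrow{\cdot x_3}S/(I^t:x_2)\to S/((I^t:x_2),x_3)\to 0.
\]
The identity $(I^t:x_2x_3)=I^{t-1}$, an analogue of \cite[Lemma~4.3]{ZCLY} used in the proof of Theorem~\ref{6-cycle}, reduces the leftmost term so that induction gives $\depth(S/I^{t-1})\geq\max\{\lceil(n-t+1)/3\rceil,1\}\geq\max\{\lceil(n-t)/3\rceil,1\}$. The rightmost term is handled by the elementary monomial identity $((I^t:x_2),x_3)=((I^t,x_3):x_2)$ (valid because $\gcd(x_2,x_3)=1$), together with Lemma~\ref{exact}(2) applied to
\[
0\to S/((I^t,x_3):x_2)(-1)\xrightarrow{\cdot x_2}S/(I^t,x_3)\to S/(I^t,x_2,x_3)\to 0,
\]
giving $\depth(S/((I^t,x_3):x_2))\geq\min\{\depth(S/(I^t,x_3)),\,\depth(S/(I^t,x_2,x_3))+1\}$. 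By the symmetric analysis $(I^t,x_3)$ corresponds to the path with $e_1$ at an endpoint, so $\depth(S/(I^t,x_3))\geq\max\{\lceil(n-t)/3\rceil,1\}$; meanwhile modulo $(x_2,x_3)$ the ideal $I$ collapses to the edge ideal of the trivial path $x_4-x_5-\cdots-x_n-x_1$ on $n-2$ vertices, so Lemma~\ref{path} yields $\depth(S/(I^t,x_2,x_3))=\max\{\lceil(n-t-1)/3\rceil,1\}$, and adding $1$ dominates $\max\{\lceil(n-t)/3\rceil,2\}$ since $\lceil(n-t-1)/3\rceil+1\geq\lceil(n-t)/3\rceil$ in all residue classes. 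Feeding the three bounds back through Lemma~\ref{exact}(1) closes the induction.

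The principal obstacle is the colon identity $(I^t:x_2x_3)=I^{t-1}$ in the edge-weighted integrally closed setting, since this is precisely what powers the recursion on $t$. The inclusion $\supseteq$ is formal from $x_2x_3\in I$, but the reverse requires showing that every factorization of $mx_2x_3\in I^t$ as a $t$-fold product of generators in which the generator $x_2x_3$ itself does not appear (forcing $x_2$ to come from $(x_1x_2)^{{\bf w}_1}$ and $x_3$ from $(x_3x_4)^{{\bf w}_3}$) can be rewritten, using the unit-weight edges $x_4x_5,\ldots,x_nx_1$ and the integrally closed hypothesis of Lemma~\ref{integral}, to exhibit $m$ as a $(t-1)$-fold product of generators. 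A secondary bookkeeping concern is confirming that for small $n\in\{4,5\}$ the weaker bounds of Lemma~\ref{nontrivialpath2}(1)-(2) still suffice; this is automatic because the target bound degenerates to $1$ in that range once $t\geq 2$.
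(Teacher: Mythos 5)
Your proposal is correct and follows essentially the same route as the paper: induction on $t$ with base case Theorem~\ref{cycle}, the same chain of three short exact sequences built around the trivial edge $x_2x_3$, the key identities $(I^t,x_j)=I(C_{\bf w}^n\setminus x_j)^t+(x_j)$, $(I^t:x_2x_3)=I^{t-1}$ (which the paper likewise imports from \cite[Lemmas 4.1 and 4.3]{ZCLY} rather than reproving) and $((I^t:x),y)=((I^t,y):x)$, combined with Lemmas~\ref{exact}, \ref{sum1}, \ref{path} and \ref{nontrivialpath2}. The only difference is cosmetic: you colon first by $x_2$ and the paper by $x_3$, which is the same argument up to the symmetry exchanging $e_1$ and $e_3$.
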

\begin{proof} Let $I=I(C_{\bf w}^n)$.
	 First,  by  the assumption and Lemma \ref{integral}, we have ${\bf w}_i=1$ for all $i\neq 1,3$.	 We prove  the statements by induction on $t$ with the case $t=1$  verified in  Theorem \ref{cycle}.  Thus, in the following, we can assume that $t\ge 2$.

 By \cite[Lemmas 4.1 and 4.3]{ZCLY}, it  is easy to see that $(I^t,x_3)=I(C_{\bf w}^n\backslash x_3)^t+(x_3)$, $(I^t:x_3x_2)=I^{t-1}$, $((I^t:x_{3}),x_{2})=((I^t,x_{2}):x_{3})$,
 $((I^t,x_2),x_3)=I(P_{\bf w}^{n-2})^t+(x_2,x_3)$, where $P_{\bf w}^{n-2}$ is an induced subgraph of  $C_{\bf w}^n$ on the set $\{x_1,\widehat{x_2},\widehat{x_3},x_4,\ldots,x_n\}$ and $\widehat{x_2}$ denotes the element $x_2$ omitted from  $\{x_1,x_2,\ldots,x_n\}$.
By  induction and Lemmas \ref{sum1} and \ref{nontrivialpath2}, we have
 \begin{align*}
	    		\depth(S/(I^t,x_3))&\geq \max\{\lceil \frac{(n-1)-t+1}{3}\rceil,1\}=\max\{\lceil \frac{n-t}{3}\rceil,1\},\\
	    		\depth(S/((I^t:x_3):x_2))&\geq \max\{\lceil \frac{n-(t-1)}{3}\rceil,1\}=\max\{\lceil \frac{n-t+1}{3}\rceil,1\},\\
	    		\depth(S/(I^t,x_2))&\geq \max\{\lceil \frac{(n-1)-t+1}{3}\rceil,1\}=\max\{\lceil \frac{n-t}{3}\rceil,1\},\\
	    		\depth(S/((I^t,x_2),x_3))&=\geq \max\{\lceil \frac{(n-2)-t+1}{3}\rceil,1\}=\max\{\lceil \frac{n-t-1}{3}\rceil,1\}.
	    	\end{align*}
 Using Lemma \ref{exact} to the following short exact sequences
	    	\begin{gather*}
	    		\begin{matrix}
	    			0 & \longrightarrow & \frac{S}{I^t:x_{3}}(-1)  & \stackrel{ \cdot x_3} \longrightarrow  & \frac{S}{I^t} & \longrightarrow & \frac{S}{(I^t,x_{3})} & \longrightarrow & 0,\\
	    			0 & \longrightarrow & \frac{S}{I^t:x_{3} x_{2}}(-1)  & \stackrel{ \cdot x_2} \longrightarrow  & \frac{S}{I^t:x_{3}} & \longrightarrow & \frac{S}{((I^t:x_{3}),x_{2})} & \longrightarrow & 0,\\
	    			0 & \longrightarrow & \frac{S}{((I^t:x_{3}),x_{2})}(-1)  & \stackrel{ \cdot x_3} \longrightarrow  & \frac{S}{(I^t,x_{2})} & \longrightarrow & \frac{S}{((I^t,x_{2}),x_{3})} & \longrightarrow & 0,\\
	    		\end{matrix}
	    	\end{gather*}
	    	we get  $\depth (S/I^t)\geq \{\max{\lceil \frac{n-t}{3}\rceil,  1}\}$.
	    \end{proof}

	For a  monomial ideal $I\subset S$, let $\mathcal{G}(I)$ denote the unique minimal set of its monomial generators.
\begin{Theorem}\label{4-cycle}
	If $C_{\bf w}^4$ has two edges with non-trivial weights, then
\[
\depth(S/I(C_{\bf w}^4)^t)\le 1  \text{\ for all\ } t\ge 2.
\]		
\end{Theorem}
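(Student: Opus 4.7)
The plan is to apply Lemma \ref{up}: it suffices to exhibit a monomial $f\notin I^t$ with $\depth(S/(I^t:f))\le 1$. For this I will show that $\frakP:=(x_2,x_3,x_4)$ belongs to $\Ass(S/(I^t:f))$; the standard inequality $\depth(M)\le\dim(S/\frakP)$ for $\frakP\in\Ass(M)$ then yields $\depth(S/(I^t:f))\le 1$, and Lemma \ref{up} finishes.

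By Lemma \ref{integral} the two non-trivial edges of $C_{\bf w}^4$ cannot share a vertex, so by the symmetry assumption they are $e_1$ and $e_3$, with ${\bf w}_2={\bf w}_4=1$. Writing $a={\bf w}_1\ge b={\bf w}_3\ge 2$, we have $I=(x_1^ax_2^a,\, x_2x_3,\, x_3^bx_4^b,\, x_1x_4)$. I propose $f=(x_1^ax_2^a)^{t-1}$ and $g=x_1x_3$, and claim $(I^t:fg)=\frakP$. The non-containment $f\notin I^t$ is immediate because any monomial in $I^t$ with support in $\{x_1,x_2\}$ is a multiple of $(x_1^ax_2^a)^t$, whose degree exceeds that of $f$.

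For the inclusion $\frakP\subseteq(I^t:fg)$ I will exhibit the following divisors in $I^t$, each a product of exactly $t$ generators of $I$:
\[
(x_2x_3)(x_1^ax_2^a)^{t-1}\mid x_2fg,\quad (x_2x_3)^2(x_1^ax_2^a)^{t-2}\mid x_3fg,\quad (x_2x_3)(x_1x_4)(x_1^ax_2^a)^{t-2}\mid x_4fg,
\]
each of which reduces to a routine exponent comparison and uses only $a\ge 2$ (the case $t=2$ being covered by the convention $(x_1^ax_2^a)^0=1$). For the reverse containment $(I^t:fg)\subseteq\frakP$ it is enough to check that no pure power $x_1^k$ lies in $(I^t:fg)$: any $t$-fold product of generators of $I$ dividing $x_1^k\cdot fg=x_1^{a(t-1)+k+1}x_2^{a(t-1)}x_3$ must, since $x_4$ is absent, use exactly one factor $x_2x_3$ (to supply the $x_3$) and $t-1$ factors $x_1^ax_2^a$, producing $x_2$-degree $a(t-1)+1$, which exceeds $a(t-1)$.

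Combining the two inclusions gives $(I^t:fg)=\frakP$, so $\frakP\in\Ass(S/(I^t:f))$ and the argument is complete. The principal technical burden is the bookkeeping of the exponent inequalities above, and the hypothesis $a\ge 2$ is precisely what allows two copies of $x_2x_3$ to fit into the $x_2$-budget of $x_3\cdot fg$; otherwise the proof reduces to the careful choice of $f$ and $g$.
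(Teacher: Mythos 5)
Your argument is essentially the paper's argument: the paper also proves the bound by coloning $I^t$ with one well-chosen monomial and invoking Lemma \ref{up} — it takes $f=x_1(x_3x_4)^{(t-1){\bf w}_3}$ and shows $(I^t:f)=(x_1,x_2,x_4)$, while you take $x_1x_3(x_1^{{\bf w}_1}x_2^{{\bf w}_1})^{t-1}$ and arrive at $(x_2,x_3,x_4)$; your associated-prime packaging is equivalent to the paper's direct observation that $\depth(S/(I^t:f))=1$. One small slip in your reverse inclusion: a $t$-fold product of generators dividing $x_1^k\cdot fg$ is not obliged to ``supply the $x_3$'', so the claim that it must use exactly one factor $x_2x_3$ is unjustified as stated — besides the cases with at least one factor $x_2x_3$ (at most one is forced by the $x_3$-degree being $1$, and that one fails by your $x_2$-count) you must also exclude the product $(x_1^{{\bf w}_1}x_2^{{\bf w}_1})^t$ using no factor $x_2x_3$, which fails by the same comparison since its $x_2$-degree ${\bf w}_1 t$ exceeds ${\bf w}_1(t-1)$. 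With that trivial patch the equality $(I^t:fg)=(x_2,x_3,x_4)$, and hence your proof, is correct.
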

	\begin{proof}
		Let $I=I(C_{\bf w}^4)$, ${\bf w}_1\geq {\bf w}_3\geq 2$ and ${\bf w}_2={\bf w}_4=1$.
Choose $f=x_1(x_3x_4)^{(t-1){{\bf w}_3}}$, then $(I^t:f)=(x_1,x_2,x_4)$.
In fact,
	\begin{align*}
			x_1f&=x_1^2(x_3x_4)^{(t-1){{\bf w}_3}}=x_3^{{\bf w}_3}x_4^{{\bf w}_3-2}(x_3x_4)^{(t-2){{\bf w}_3}}(x_4x_1)^2\in I^t,\\
			x_2f&=x_1x_2(x_3x_4)^{(t-1){{\bf w}_3}}=(x_3x_4)^{{\bf w}_3-1}(x_2x_3)(x_3x_4)^{(t-2){{\bf w}_3}}(x_4x_1)\in I^t,\\
			x_4f&=x_1x_4(x_3x_4)^{(t-1){{\bf w}_3}}=(x_3x_4)^{(t-1){{\bf w}_3}}(x_4x_1)\in I^t.
		\end{align*}
So $(x_1,x_2,x_4)\subseteq (I^t:f)$.

On the other hand, let $u \in \mathcal{G}(I^t: f)$ be a monomial, then  we can write $uf$ as
	   \begin{align}\label{eqn: equality3}
	   u(x_1(x_3x_4)^{(t-1){{\bf w}_3}})=(x_1x_2)^{k_1{\bf w}_1}(x_2x_3)^{k_2}(x_3x_4)^{k_3{\bf w}_3}(x_4x_1)^{k_4}h,
	   \end{align}
	   where $h$ is a monomial and  $\sum\limits_{i=1}^4k_i=t$ with $k_i\geq 0$ for any $i\in [4]$. If $k_3=t$, then  Eq. (\ref{eqn: equality3}) becomes
	$u(x_1(x_3x_4)^{(t-1){{\bf w}_3}})=(x_3x_4)^{t{\bf w}_3}h$.
It follows that $u\in  (x_4)\subseteq(x_1,x_2,x_4)$. Now, let $0\leq k_3\leq t-1$. Consider the following two cases:
	
	   (1) If $k_1\geq 1$ or $k_2\geq 1$, then  $u\in (x_2)$ by the choice of $f$ and  Eq. (\ref{eqn: equality3}).
	
	   (2) If $k_1=k_2=0$, then  Eq. (\ref{eqn: equality3}) becomes $u(x_1(x_3x_4)^{(t-1){{\bf w}_3}})=(x_3x_4)^{k_3{\bf w}_3}(x_4x_1)^{k_4}h$ with $k_3+k_4=t$. Thus  $u\in (x_4)\subseteq(x_1,x_2,x_4)$ if $k_3=t-1$; otherwise, $u\in (x_1)\subseteq(x_1,x_2,x_4)$. Hence, $(I^t:f)\subseteq (x_1,x_2,x_4)$.

Therefore, $\depth(S/I(C_{\bf w}^4)^t)\le \depth(S/(I(C_{\bf w}^4)^t:f))=1$ by Lemma \ref{up}.
\end{proof}	

A direct consequence follows from  Theorems \ref{cycle}, \ref{geq} and \ref{4-cycle}.
   \begin{Corollary}
    If $C_{\bf w}^4$ has two edges with non-trivial weights, then
    \[
    \depth(S/I(C_{\bf w}^4)^t)=1  \text{\ for all\ } t\ge 1.
    \]				
    \end{Corollary}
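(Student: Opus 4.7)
The plan is to observe that this corollary is an immediate combination of the three results listed just above it, and to split the argument into the two ranges $t=1$ and $t\geq 2$.

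For $t=1$, I would directly invoke Theorem \ref{cycle}: since $n=4$, the formula $\depth(S/I(C_{\bf w}^n)) = \lceil (n-1)/3 \rceil$ evaluates to $\lceil 3/3 \rceil = 1$. This settles the base case without needing anything about the number of non-trivial weights.

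For $t\geq 2$, I would sandwich the depth between a lower bound and an upper bound. Theorem \ref{geq} (applied with $n=4$) gives
\[
\depth(S/I(C_{\bf w}^4)^t) \;\geq\; \max\Bigl\{\Bigl\lceil \tfrac{4-t}{3}\Bigr\rceil,\,1\Bigr\} \;=\; 1,
\]
since $\lceil (4-t)/3\rceil \leq 1$ for $t\geq 2$. Theorem \ref{4-cycle} provides the matching upper bound $\depth(S/I(C_{\bf w}^4)^t) \leq 1$. Combining these two inequalities forces equality.

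Since both hypotheses (two non-trivial edge weights and $n\geq 4$) are exactly what Theorems \ref{geq} and \ref{4-cycle} require, there is no real obstacle; the only care needed is to verify the $t=1$ case separately using Theorem \ref{cycle}, because Theorem \ref{4-cycle} is stated only for $t\geq 2$. Once these three ingredients are cited in the correct regimes, the corollary follows.
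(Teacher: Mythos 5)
Your proposal is correct and matches the paper's own argument, which likewise deduces the corollary directly from Theorems \ref{cycle}, \ref{geq} and \ref{4-cycle}. The only refinement you add is the explicit separation of the case $t=1$ (handled by Theorem \ref{cycle}) from $t\geq 2$ (sandwiched between Theorems \ref{geq} and \ref{4-cycle}), which is exactly the intended reading.
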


\medskip
In the following, we use the notation $u_i=(x_{i}x_{i+1})^{{\bf w}_i}$  where $x_{n+1}=x_1$, and $\prod\limits_{\ell=i}^{j}u_{\ell}=1$ if $i>j$.

 \begin{Theorem}\label{2n-cycle}
      	Let $C_{\bf w}^n$ be an  $n$-cycle with $n\geq 5$. If $C_{\bf w}^n$ has two edges with non-trivial weights.  	Let $t\geq 2$ and choose
    \[
    f=\begin{cases}
    		x_5(x_3x_4)^{{\bf w}_3}\prod\limits_{k=5}^{t+2}(x_kx_{k+1}), &\text{if $2\leq t\leq n-3$,} \\ x_5(x_3x_4)^{(t-n+3){\bf w}_3}\prod\limits_{k=5}^{n}(x_kx_{k+1}), &\text{if $t\geq n-2$.} \\
    	\end{cases}
\]
    	Then
\[
(I(C_{\bf w}^n):f)=
    	\begin{cases}
    		(x_2)+\sum\limits_{k=4}^{t+4}(x_k)+I(C_{\bf w}^n), &\text{if $2\leq t\leq n-3$,} \\
    		\sum\limits_{j \in [n]\setminus\{3\} }(x_j), &\text{if $t\geq n-2$.} \\
    	\end{cases}
\]
    \end{Theorem}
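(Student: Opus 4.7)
My plan is to prove both containments separately in each of the two cases $2 \le t \le n-3$ and $t \ge n-2$. Write $I = I(C_{\bf w}^n)$ for brevity. The containment $I \subseteq (I^t : f)$ is immediate once one notes that $f$ is the product of $t-1$ edge generators of $I$ times a single loose variable $x_5$, so $f \in I^{t-1}$. For each additional generator $x_j$ of the right-hand side I would exhibit an explicit factorization $x_j \cdot f = \bigl(\prod_{s=1}^{t} u_{i_s}\bigr) \cdot h$ with $h \in S$ a monomial. The constructions fall into a small number of patterns: $x_2$ is absorbed by introducing $u_2 = x_2 x_3$ in exchange for one $x_3$ drawn from $u_3$; $x_4$ combines with the loose $x_5$ to form $u_4 = x_4 x_5$; for $5 \le j \le t+3$ in Case 1 the extra $x_j$ is absorbed by promoting $u_{j-1}$ to $u_{j-1}^2$ or by an alternating replacement pattern such as $u_4^2 u_6^2 \cdots$; $x_{t+4}$ is handled by $u_{t+3}$; and in Case 2 the variables $x_j$ with $j \in \{1, n\}$ are handled by schemes such as $u_3^{t-n+3} u_5^2 u_7^2 \cdots u_n^2$ when $n$ is odd, or $u_3^{t-n+2} u_4^2 u_6^2 \cdots u_n^2 \cdot x_3^{{\bf w}_3} x_4^{{\bf w}_3-2}$ when $n$ is even.

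For the reverse inclusion, let $u \in (I^t : f)$ be a monomial, write
\[
uf = \Bigl(\prod_{j=1}^n u_j^{a_j}\Bigr) \cdot h, \qquad \sum_{j=1}^n a_j = t,
\]
with $h \in S$ a monomial, and suppose toward contradiction that $u$ lies outside the claimed right-hand side. Then $u \notin I$, so $\supp(u)$ is an independent set in $C_{\bf w}^n$, and $u$ omits every variable named in the right-hand side. In Case 1 this forces $\supp(u) \subseteq \{x_1, x_3, x_{t+5}, \ldots, x_n\}$, while in Case 2 one has $u = x_3^{b_3}$ for some $b_3 \ge 0$. Comparing the exponent of each variable on both sides yields $\sum_{j=1}^n (\deg_{x_\ell} u_j)\, a_j \le \deg_{x_\ell}(uf)$, which together with the explicit shape of $f$ constrains the $a_j$'s severely.

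In Case 1 the vanishing of $\deg_{x_2}(uf)$ and $\deg_{x_{t+4}}(uf)$ forces $a_1 = a_2 = a_{t+3} = a_{t+4} = 0$. For each $j \in \{t+5, \ldots, n\}$ (taken cyclically, so $j = n$ corresponds to the edge $x_n x_1$), the generator $u_j$ would require both of its endpoint variables to lie in $\supp(uf)$; neither lies in $f$, so both would be forced into $\supp(u)$, producing the forbidden edge $x_j x_{j+1}$. Hence $a_j = 0$ for these indices, and $a_3 + a_4 + \cdots + a_{t+2} = t$. The $x_4$-bound $a_3 {\bf w}_3 + a_4 \le {\bf w}_3$ forces $a_3 \le 1$; the $x_\ell$-bounds for $5 \le \ell \le t+2$ give $a_{\ell-1} + a_\ell \le 2$; and $\deg_{x_{t+3}}(uf) = 1$ gives $a_{t+2} \le 1$. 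A telescoping estimate
\[
2(a_4 + \cdots + a_{t+2}) \le \sum_{j=4}^{t+1}(a_j + a_{j+1}) + a_4 + a_{t+2} \le 2(t-2) + a_4 + a_{t+2},
\]
together with $a_4 \le 2$ when $a_3 = 0$ and $a_4 = 0$ when $a_3 = 1$, produces the strict bound $a_3 + a_4 + \cdots + a_{t+2} \le t - 1$, contradicting $\sum_j a_j = t$.

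Case 2 follows the same template: $a_1 = a_2 = 0$ from $\deg_{x_2}(uf) = 0$, and $a_n \le 1$ from $\deg_{x_1}(uf) = 1$ together with ${\bf w}_1 \ge 2$. The $x_4$-bound gives $a_3 \le t - n + 3$, and a first telescoping $2 \sum_{j=4}^n a_j \le 2(n-4) + a_4 + a_n$ yields $\sum_{j=4}^n a_j \le n - 3$; matching this with $\sum_j a_j = t$ forces $a_3 = t - n + 3$ and $a_4 = 0$. A refined telescoping $2 \sum_{j=5}^n a_j \le 2(n-5) + a_5 + a_n \le 2n - 7$ then yields $\sum_{j=5}^n a_j \le n - 4$, strictly less than the required $n - 3$. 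The main obstacle throughout is this last combinatorial step: one needs the adjacency bounds together with the two boundary bounds to pinch off a strict integer unit of slack from the naive telescoping, turning a half-unit gap into a strict integer inequality that delivers the contradiction.
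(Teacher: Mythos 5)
Your proposal is correct in substance, and it is worth comparing with the paper's proof because the hard half is organized differently. For the containment $(x_2)+\sum_{k=4}^{t+4}(x_k)+I\subseteq (I^t:f)$ (resp.\ $\sum_{j\neq 3}(x_j)\subseteq(I^t:f)$) both you and the paper argue by exhibiting explicit products of $t$ generators dividing $x_jf$; the paper writes these out in full (including a four-way parity analysis on $n$ and $i$ when $t\ge n-2$), whereas you only name the substitution patterns (``promote $u_{j-1}$ to $u_{j-1}^2$'', alternating squares, the $u_3^{t-n+3}$ schemes) -- the patterns are the right ones, but this half of your argument is still a plan rather than a proof and would need to be written out, in particular for all $x_j$ with $j\in[n]\setminus\{3\}$ in the case $t\ge n-2$, not only $j\in\{1,n\}$. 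For the reverse inclusion your route genuinely differs from the paper's: the paper takes $u\in\mathcal G(I^t:f)$, splits into cases according to the multiplicity $k_3$ of $u_3$ (and, for $t\ge n-1$, inducts on $t$ via $(I^t:f)\subseteq(I^{t-1}:f/(x_3x_4)^{{\bf w}_3})$), locating $u$ inside specific generators by degree and gcd comparisons; you instead run a single contrapositive counting argument, using the variable-degree inequalities ${\bf w}_3a_3+a_4\le \deg_{x_4}f$, $a_{\ell-1}+a_\ell\le 2$, $a_{t+2}\le 1$ (resp.\ $a_n\le1$) and a telescoped sum to force $\sum_j a_j\le t-1$ (resp.\ $\sum_{j\ge5}a_j\le n-4$), a contradiction. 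I checked this telescoping scheme and it closes in both cases, uniformly in $t$, so it buys you a shorter, induction-free argument than the paper's; the price is that a few boundary situations deserve an explicit word, since your generic degree counts shift there: $t=2$ in the first case (where $\deg_{x_5}f=1$ and the binding bound is $a_{t+2}=a_4\le1$), $t=n-3$ (where $x_{t+4}=x_1$, so the edge $u_n=x_nx_1$ is killed by the $x_1$-degree rather than by independence of $\supp(u)$), and $n=5$ in the second case (where $a_5=a_n$ and the refined telescoping degenerates but the bound $a_n\le1$ still finishes). Finally, note the statement's left-hand side should read $(I(C_{\bf w}^n)^t:f)$, as both you and the paper in fact compute.
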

	
	\begin{proof} Let $I=I(C_{\bf w}^n)$ and ${\bf w}_1\geq {\bf w}_3\geq 2$ and  ${\bf w}_i=1$ for all $i\neq 1,3$. Then
 it is trivial that $I(C_{\bf w}^n)\subseteq (I^t:f)$ by the choice of $f$, and $x_2\in (I^t:f)$ if $2\leq t\leq n-3$, since $x_2f=(x_3x_4)^{{{\bf w}_3}-1}u_2\prod\limits_{k=4}^{t+2}u_k\in I^t$.
Now we consider the case where 	 $2\leq t\leq n-3$ and $4\leq i\leq t+4$.	
 In this case, if $i$ is even, then $x_if=u_3(\prod\limits_{\ell=2}^{\frac{i}{2}-1}u_{2\ell+1}^2)(\prod\limits_{\ell=i}^{t+2}u_\ell)\in I^t$.
Otherwise, $x_if=x_3^{{{\bf w}_3}}x_4^{{{\bf w}_3}-2}(\prod\limits_{\ell=2}^{\frac{i-1}{2}}u_{2\ell}^2)(\prod\limits_{\ell=i}^{t+2}u_\ell)\in I^t$.
Therefore, $(x_2)+\sum\limits_{k=4}^{t+4}(x_k)+I(C_{\bf w}^n)\subseteq (I^t:f)$.

On the other hand, let $u \in \mathcal{G}(I^t: f)$ be  a monomial,  then   $uf$ can be written as
\begin{align}\label{eqn: equality4}
			ux_5(x_3x_4)^{{\bf w}_3}\prod\limits_{k=5}^{t+2}(x_kx_{k+1})=h\prod\limits_{i=1}^{n}u_i^{k_i}
		\end{align}
 for some monomial $h$, where $\sum\limits_{i=1}^{n}k_i=t$ and each $k_i\geq 0$.
It is easy to see that if $k_i\geq 1$ for some $i\in\{1,2, t+3\}$ then $u\in (x_2)+(x_{t+4})$, and if $k_i\geq 1$ for some  $\{t+4,\ldots, n\}$  then $u\in(x_ix_{i+1})\subseteq I(C_{\bf w}^n)$.
In the following, we assume that $k_i=0$ for each $i\in \{1,2\}\cup\{t+3,t+4,\ldots, n\}$. In this case, Eq. (\ref{eqn: equality4}) becomes
		\begin{align}\label{eqn: equality5}
			ux_5(x_3x_4)^{{\bf w}_3}\prod\limits_{k=5}^{t+2}(x_kx_{k+1})=h\prod\limits_{i=3}^{t+2}u_i^{k_i},
		\end{align}
		where  $\sum\limits_{i=3}^{t+2}k_i=t$.
We distinguish into three subcases:

(a) If $k_3\geq 2$, then $u\in((x_3x_4)^{{\bf w}_3})\subseteq I(C_{\bf w}^n)$.

(b) If $k_3=1$, then  Eq. (\ref{eqn: equality5}) becomes $ux_5\prod\limits_{k=5}^{t+2}(x_kx_{k+1})=h\prod\limits_{i=4}^{t+2}u_i^{k_i}$ with   $\sum\limits_{i=4}^{t+2}k_i=t-1$. When $k_4\geq 1$,   $u\in (x_4)$. Otherwise,   Eq. (\ref{eqn: equality5}) becomes $ux_5\prod\limits_{k=5}^{t+2}(x_kx_{k+1})=h\prod\limits_{i=5}^{t+2}u_i^{k_i}$ with  $\sum\limits_{i=5}^{t+2}k_i=t-1$. Thus
\begin{align*}\label{eqn: equality6}
u\frac{x_5\prod\limits_{k=5}^{t+2}(x_kx_{k+1})}{\gcd(x_5\prod\limits_{k=5}^{t+2}(x_kx_{k+1}),\prod\limits_{i=5}^{t+2}u_i^{k_i})}
=h\frac{\prod\limits_{i=5}^{t+2}u_i^{k_i}}{\gcd(x_5\prod\limits_{k=5}^{t+2}(x_kx_{k+1}),\prod\limits_{i=5}^{t+2}u_i^{k_i})}.
\end{align*}
Since the degrees of monomials $x_5\prod\limits_{k=5}^{t+2}(x_kx_{k+1})$ and $\prod\limits_{i=5}^{t+2}u_i^{k_i}$ are $2t-3$ and $2t-4$, respectively, we can deduce that
$u\in \sum\limits_{i=5}^{t+3}(x_k)$ by comparing the degree of $x_i$ for any $i\in \{5,6,\ldots,t+3\}$.

(c) If $k_3=0$, then  Eq. (\ref{eqn: equality5}) becomes  $ux_5(x_3x_4)^{{\bf w}_3}\prod\limits_{k=5}^{t+2}(x_kx_{k+1})=h\prod\limits_{i=4}^{t+2}u_i^{k_i}$ with $\sum\limits_{i=4}^{t+2}k_i=t$. In this case, if $k_4\geq 3$, then $u\in (x_5)$ by comparing the degree of $x_5$; otherwise,   $x_3^{{\bf w}_3}x_4^{{\bf w}_3-2}\mid h$. It follows that $ux_4^2x_5\prod\limits_{k=5}^{t+2}(x_kx_{k+1})=(\prod\limits_{i=4}^{t+2}u_i^{k_i})\frac{h}{x_3^{{\bf w}_3}x_4^{{\bf w}_3-2}}$. By similar arguments to (b), we can get that $u\in \sum\limits_{i=4}^{t+3}(x_k)$.

In breif, $(I^t:f)\subseteq (x_2)+\sum\limits_{k=4}^{t+4}(x_k)+I(C_{\bf w}^n)$.

 Now we consider the case where $t\geq n-2$ and  divide into the following four cases:

 {\it Case $1$}:  If both $n$  and $i$ are odd, then $x_if\in I^t$, since
$$
x_if=
\begin{cases}
	u_3^{t-n+3}(\prod\limits_{\ell=2}^{\frac{n-1}{2}}u_{2\ell+1}^2), &\text{if $i=1$,} \\
	x_3^{{\bf w}_3}x_4^{{\bf w}_3-2}u_3^{t-n+2}(\prod\limits_{\ell=2}^{\frac{i-1}{2}}u_{2\ell}^2)(\prod\limits_{\ell=i}^{n}u_{\ell}), &\text{if $i\neq 1,3$.}
\end{cases}
$$

{\it Case $2$}:  If both $n$   and $i$ are  even, then $x_if\in I^t$, since
\[
	x_if=
	\begin{cases}
		(x_3x_4)^{{\bf w}_3-1}u_2u_3^{t-n+2}(\prod\limits_{\ell=4}^{n}u_{\ell}), &\text{if $i=2$,} \\
		u_3^{t-n+3}(\prod\limits_{\ell=2}^{{\frac{i}{2}}-1}u_{2\ell+1}^2)(\prod\limits_{\ell=i}^{n}u_{\ell}), &\text{if $i\neq 2$.}
	\end{cases}
\]
{\it Case $3$}:  If $n$ is  odd   and $i$ is   even, then $x_if\in I^t$, since
\[
x_if=
	\begin{cases}
		(x_3x_4)^{{\bf w}_3-1}u_2u_3^{t-n+2}(\prod\limits_{\ell=4}^{n}u_{\ell}), &\text{if $i=2$,} \\
		u_3^{t-n+3}(\prod\limits_{\ell=2}^{\frac{i}{2}-1}u_{2\ell}^2)(\prod\limits_{\ell=i}^{n}u_{\ell}), &\text{if $i\neq 2$.}
	\end{cases}
\]
{\it Case $4$}:  If $n$ is  even and $i$ is odd, then $x_if\in I^t$, since
$$
x_if=
\begin{cases}
	x_3^{{\bf w}_3}x_4^{{\bf w}_3-2}u_3^{t-n+2}(\prod\limits_{\ell=2}^{\frac{n}{2}}u_{2\ell}^2), &\text{if $i=1$,} \\
x_3^{{\bf w}_3}x_4^{{\bf w}_3-2}u_3^{t-n+2}(\prod\limits_{\ell=2}^{\frac{i-1}{2}}u_{2\ell}^2)(\prod\limits_{\ell=i}^{n}u_{\ell}), &\text{if $i\neq 1,3$.}
\end{cases}
$$

In brief, $\sum\limits_{i \in [n]\setminus\{3\} }(x_i)\subseteq (I^t:f)$.

Next, we will prove that $(I^t:f)\subseteq \sum\limits_{i \in [n]\setminus\{3\} }(x_i)$
 by induction on $t$.

(a) If $t=n-2$, then $f=x_5(x_3x_4)^{{\bf w}_3}\prod\limits_{k=5}^{n}(x_kx_{k+1})$. Let  $u \in \mathcal{G}(I^t: f)$ be a monomial, then we can write $uf$ as
 \begin{align}\label{eqn: equality7}
	ux_5(x_3x_4)^{{\bf w}_3}\prod\limits_{k=5}^{n}(x_kx_{k+1})=h\prod\limits_{i=1}^{n}u_i^{k_i}
\end{align}
for some monomial $h$, where $\sum\limits_{i=1}^{n}k_i=t$ and each $k_i\geq 0$.

If $k_1\geq 1$ or $k_2\geq 1$, then  $u\in (x_2)$. In the following, we will assume that $k_1=k_2=0$. In this case, Eq.  (\ref{eqn: equality7}) becomes
\begin{align}\label{eqn: equality8}
	ux_5(x_3x_4)^{{\bf w}_3}\prod\limits_{k=5}^{n}(x_kx_{k+1})=h\prod\limits_{i=3}^{n}u_i^{k_i}
\end{align}
with $\sum\limits_{i=3}^{n}k_i=t$. We consider the following three subcases:

(i) If $k_3\geq 2$, then $u\in (x_4)$ by comparing the degree of $x_4$  in Eq.  (\ref{eqn: equality8}).

(ii) If $k_3=1$,  then   Eq.  (\ref{eqn: equality8}) becomes
	$ux_5\prod\limits_{k=5}^{n}(x_kx_{k+1})=h\prod\limits_{i=4}^{n}u_i^{k_i}$
with $\sum\limits_{i=4}^{n}k_i=t-1$. Further,
if $k_4\geq 1$, then $u\in (x_4)$ by  comparing the degree of $x_4$; otherwise, Eq.  (\ref{eqn: equality8}) becomes
$ux_5\prod\limits_{k=5}^{n}(x_kx_{k+1})=h\prod\limits_{i=5}^{n}u_i^{k_i}$
with $\sum\limits_{i=5}^{n}k_i=t-1$. Similar arguments as in (1)(b), we can get $u\in(x_1)+\sum\limits_{i=5}^{n}(x_k)$.

(iii) If $k_3=0$, then  Eq.  (\ref{eqn: equality8}) becomes	$ux_5(x_3x_4)^{{\bf w}_3}\prod\limits_{k=5}^{n}(x_kx_{k+1})=h\prod\limits_{i=4}^{n}u_i^{k_i}$
with $\sum\limits_{i=4}^{n}k_i=t$. Furthermore, if $k_4\geq 3$, then  $u\in (x_5)$  by  comparing the degree of $x_5$; otherwise, by similar arguments as in (1)(c),
we can see that $u\in(x_1)+\sum\limits_{k=4}^{n}(x_k)$.

(b) If $t\geq n-1$. Let $u \in \mathcal{G}(I^t: f)$ be  a monomial, then  we can write $uf$ as
$ux_5(x_3x_4)^{(t-n+3){\bf w}_3}\prod\limits_{k=5}^{n}(x_kx_{k+1})=h\prod\limits_{i=1}^{n}u_i^{k_i}$
for some monomial $h$, where $\sum\limits_{i=1}^{n}k_i=t$ and each $k_i\geq 0$. We consider the following two subcases:

(i) If $k_3\geq 1$, then  $u\frac{f}{(x_3x_4)^{{\bf w}_3}}\in I^{t-1}$.  From   induction it follows that
 $(I^t:f)\subseteq (I^{t-1}:\frac{f}{(x_3x_4)^{{\bf w}_3}})\subseteq \sum\limits_{i \in [n]\setminus\{3\} }(x_i)$.

(ii) If $k_3=0$. We consider two cases: (i) if $k_1\geq 1$ or $k_2\geq 1$, then  $u\in(x_2)$  by  comparing the degree of $x_2$; (ii) if $k_1=k_2=0$, then $u\in (x_1)+\sum\limits_{k=4}^{n}(x_k)$ by similar arguments as in  (2)(a)(iii).
	
We complete	the proof.
\end{proof}

 \begin{Theorem}
		Let $C_{\bf w}^n$ be an $n$-cycle with $n\geq 4$. If $C_{\bf w}^n$ has two edges with non-trivial weights, then $\depth (S/I(C_{\bf w}^n)^t)= \max\{\lceil \frac{n-t}{3}\rceil,1\}$ for all $t\ge 1$.
	\end{Theorem}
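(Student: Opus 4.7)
The plan is to combine the lower bound already proven in Theorem~\ref{geq} with a matching upper bound obtained via Lemma~\ref{up} applied to the monomial $f$ constructed in Theorem~\ref{2n-cycle}. First I dispose of the boundary cases: when $n=4$ the formula is exactly the Corollary preceding Theorem~\ref{2n-cycle}, and when $t=1$ it coincides with Theorem~\ref{cycle} since $\lceil (n-1)/3\rceil \geq 1$ for $n\geq 4$. So I fix $n\geq 5$, $t\geq 2$ and set $I := I(C_{\bf w}^n)$; by Theorem~\ref{geq} it suffices to establish $\depth(S/I^t) \leq \max\{\lceil (n-t)/3\rceil,1\}$, and by Lemma~\ref{up} it is enough to compute $\depth(S/(I^t:f))$ for the explicit $f$ of Theorem~\ref{2n-cycle}.

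In the regime $t \geq n-2$, Theorem~\ref{2n-cycle} gives $(I^t:f) = \sum_{j\in[n]\setminus\{3\}}(x_j)$, so $S/(I^t:f)\cong \KK[x_3]$ has depth $1$, matching $\max\{\lceil (n-t)/3\rceil,1\}=1$ in that range. In the remaining regime $2 \leq t \leq n-3$, Theorem~\ref{2n-cycle} gives $(I^t:f) = (x_2) + \sum_{k=4}^{t+4}(x_k) + I$. Passing to $S/(I^t:f)$ kills the variables $x_2,x_4,x_5,\ldots,x_{t+4}$ (with the cycle convention $x_{n+1}=x_1$ when $t=n-3$). The surviving vertices are $x_3$ together with $x_{t+5},\ldots,x_n,x_1$, and the only cycle edges on these vertices are $x_{t+5}x_{t+6},\ldots,x_{n-1}x_n,x_nx_1$, all with trivial weight because ${\bf w}_i=1$ for $i\neq 1,3$; the vertex $x_3$ is isolated. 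For $2\le t\le n-4$ this identifies
\[
S/(I^t:f) \;\cong\; \KK[x_3] \otimes_{\KK} \bigl(\KK[x_1,x_{t+5},\ldots,x_n]/I(P^{n-t-3})\bigr),
\]
so Lemma~\ref{sum1}(1) combined with Lemma~\ref{path} (applied with exponent $1$) yields
\[
\depth(S/(I^t:f)) = 1 + \max\{\lceil (n-t-3)/3\rceil,1\} = \max\{\lceil (n-t)/3\rceil,1\},
\]
using the arithmetic identity $\lceil (m-3)/3\rceil+1 = \lceil m/3\rceil$. The degenerate endpoint $t=n-3$ must be checked separately, but then every variable except $x_3$ is killed by the colon ideal and $\depth(S/(I^t:f)) = 1 = \max\{\lceil 3/3\rceil,1\}$, so the formula still holds.

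The main obstacle has in fact already been cleared by Theorem~\ref{2n-cycle}, which constructs the witness monomial $f$ and performs the hard combinatorial verification of $(I^t:f)$ in both regimes; what remains here is bookkeeping via the depth-lemma toolbox. The only subtleties to watch at this stage are the wrap-around convention $x_{n+1}=x_1$ at the boundary $t=n-3$ of the first regime (which changes $S/(I^t:f)$ from a path quotient tensored with $\KK[x_3]$ to just $\KK[x_3]$), and the comparison of the residual path depth from Lemma~\ref{path} with the target expression $\max\{\lceil (n-t)/3\rceil,1\}$.
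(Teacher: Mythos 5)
Your proposal is correct and follows essentially the same route as the paper: the lower bound comes from Theorem \ref{geq}, and the matching upper bound comes from Lemma \ref{up} applied to the witness monomial $f$ of Theorem \ref{2n-cycle} (with Theorem \ref{4-cycle} covering $n=4$ and Theorem \ref{cycle} covering $t=1$), the depth of $S/(I^t:f)$ being evaluated via Lemmas \ref{sum1} and \ref{path}. Your explicit identification of $S/(I^t:f)$ as $\KK[x_3]$ tensored with a trivially weighted path quotient, together with the checks at the endpoints $t=n-4$ and $t=n-3$, just makes precise the bookkeeping the paper leaves implicit.
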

		\begin{proof}
By Theorem \ref{geq},  $\depth (S/I(C_{\bf w}^n)^t)\geq  \{\max{\lceil \frac{n-t}{3}\rceil, 1}\}$. On the other hand, by Lemmas \ref{up} and \ref{path}, Theorems \ref{4-cycle} and \ref{2n-cycle}, we can obtain that   $\depth (S/I(C_{\bf w}^n)^t)\leq  \{\max{\lceil \frac{n-t}{3}\rceil,1}\}$.
	\end{proof}

\subsection{$C_{\bf w}^n$ has only one edge with non-trivial weights}
In this section, we  suppose ${\bf w}_1\geq 2$ and ${\bf w}_i=1$ for all $i\neq 1$.
We need the following lemmas.
	
	\begin{Lemma}{\em (\cite[Lemma 3.4]{MTV})}\label{path3}
		Let $P_{\bf w}^n$  be a trivially weighted path and  $H$  its subgraph. Then
	$\depth\left(S/(I(P_{\bf w}^n)^t+I(H))\right)\geq \lceil \frac{n-t+1}{3}\rceil$   for any $t<n$.
	\end{Lemma}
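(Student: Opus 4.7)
The plan is to proceed by induction on $t$, using a short exact sequence obtained from a colon/quotient with a suitably chosen variable.

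For the base case $t=1$, since $H$ is a subgraph of $P_{\bf w}^n$ we have $I(H)\subseteq I(P_{\bf w}^n)$, hence $I(P_{\bf w}^n)+I(H)=I(P_{\bf w}^n)$, and Lemma \ref{path} yields $\depth(S/I(P_{\bf w}^n))=\max\{\lceil n/3\rceil,1\}\geq \lceil n/3\rceil$, as required.

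For the inductive step $t\geq 2$, set $J=I(P_{\bf w}^n)^t+I(H)$ and pivot on the endpoint $x_n$, applying the short exact sequence
\[
0\longrightarrow \frac{S}{J:x_n}(-1)\stackrel{\cdot x_n}{\longrightarrow}\frac{S}{J}\longrightarrow\frac{S}{(J,x_n)}\longrightarrow 0
\]
together with Lemma \ref{exact}(1). For the right-hand term, modulo $x_n$ the ideal collapses to $I(P_{\bf w}^{n-1})^t+I(H\setminus x_n)$ on the shorter path, so the inductive hypothesis on $n$ (using that $t<n-1$; the boundary case $t=n-1$, where the target bound is $\lceil 2/3\rceil=1$, is checked separately since $J$ is a proper monomial ideal) gives $\depth(S/(J,x_n))\geq \lceil(n-t)/3\rceil$. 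For the left-hand term, the observation that $x_n$ lies in a single edge $x_{n-1}x_n$ of $P_{\bf w}^n$ yields $I(P_{\bf w}^n)^t:x_n=I(P_{\bf w}^n)^t+x_{n-1}I(P_{\bf w}^n)^{t-1}$; taking a further colon by $x_{n-1}$ and peeling off the two liberated variables $x_{n-1},x_n$ reduces the problem to an ideal of the form $I(P_{\bf w}^{n-2})^{t-1}+I(\tilde H)$ on the further-shortened path, for a suitable induced subgraph $\tilde H$. Applying the inductive hypothesis on $t$ to this smaller piece and adding back the depth contribution of the two free variables produces a bound comfortably exceeding $\lceil(n-t+1)/3\rceil$.

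\textbf{The main obstacle} is twofold. First, the reduction of $J:x_n$ to the clean form $I(P_{\bf w}^{n-2})^{t-1}+I(\tilde H)$ must be carried out carefully when edges of $H$ are incident to $x_n$ or $x_{n-1}$, since those edges contribute extra generators that interfere with the stripping. Second, the bound from $S/(J,x_n)$ is $\lceil(n-t)/3\rceil$, which matches the required $\lceil(n-t+1)/3\rceil$ unless $n-t\equiv 2\pmod 3$; recovering the missing $+1$ in this residue class is expected to require an auxiliary case analysis, possibly by choosing a different pivot vertex $x_k$ (with $k$ depending on the residue of $n-t$ modulo $3$) or by an additional induction on the number of edges of $H$, peeling off one edge of $H$ at a time when the vertex-pivot argument alone is insufficient.
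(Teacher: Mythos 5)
First, note that the paper does not prove this lemma at all: it is quoted verbatim from \cite[Lemma 3.4]{MTV}, so there is no in-paper proof to match; the closest internal analogue is the paper's own proof of Theorem \ref{pathsubgraph}, which shows what a complete argument of this type requires. Measured against that, your sketch has a genuine gap exactly at the crucial point, and you defer it rather than resolve it. Pivoting at the endpoint $x_n$ gives, for the quotient term, only $\depth(S/(J,x_n))\geq\lceil\frac{n-t}{3}\rceil$, which falls short of the target $\lceil\frac{n-t+1}{3}\rceil$ by one precisely when $n-t\equiv 0\pmod 3$ (not $2\pmod 3$ as you state). Recovering that $+1$ is the heart of the proof, and your proposal only says it is ``expected to require an auxiliary case analysis.'' The standard repair — and what both the cited proof and Theorem \ref{pathsubgraph} actually do — is to pivot at the \emph{next-to-last} vertex $x_{n-1}$: then $P^n_{\bf w}\setminus x_{n-1}$ is $P^{n-2}_{\bf w}$ together with the isolated vertex $x_n$, and Lemma \ref{sum1} contributes the extra $+1$, giving $\lceil\frac{(n-2)-t+1}{3}\rceil+1=\lceil\frac{n-t+2}{3}\rceil$ for the quotient term. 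Without this (or some equally concrete substitute), Lemma \ref{exact} only yields the weaker bound $\lceil\frac{n-t}{3}\rceil$, so the proposal does not prove the statement.

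There are also errors in the colon-term analysis. Your claim that coloning $J$ by $x_nx_{n-1}$ ``liberates'' two free variables and reduces to $I(P^{n-2}_{\bf w})^{t-1}+I(\tilde H)$ is incorrect: by Lemma \ref{path1}(1), $(I(P^n_{\bf w})^t:x_{n-1}x_n)=I(P^n_{\bf w})^{t-1}$, an ideal on \emph{all} $n$ vertices (it still has generators involving $x_{n-1},x_n$), so no depth contribution from free variables is available; the correct bound there comes from induction on $t$ on the full path, and in any case one must track how edges of $H$ incident to $x_{n-2},x_{n-1},x_n$ enter the colon, which is exactly the four-case analysis (including the auxiliary chain of ideals $K_\ell$ and repeated exact sequences) carried out in the proof of Theorem \ref{pathsubgraph}. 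Finally, your boundary case is not justified: properness of a monomial ideal does not give $\depth(S/J)\geq 1$ (one needs $\mathfrak{m}\notin\Ass(S/J)$, which requires an argument). So the overall strategy (induction plus colon/quotient exact sequences) is the right family of techniques, but as written the proof is incomplete at the decisive step and incorrect in the reduction of the colon ideal.
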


		\begin{Lemma}{\em (\cite[Lemma 4.10]{ZDCL}}{\em )}\label{path1}
			Let $P_{\bf w}^n$ be a weighted  path and  ${\bf w}_{n-1}=1$. Then for all $t \geq 2$, we have
			\begin{itemize}
			\item[(1)] $(I(P_{\bf w}^n)^t: x_{n-1} x_n)=I(P_{\bf w}^n)^{t-1}$;
	    	\item[(2)] $((I(P_{\bf w}^n)^t: x_n), x_{n-1})=(I(P_{\bf w}^n \backslash x_{n-1})^t, x_{n-1})$;
			\item[(3)] $(I(P_{\bf w}^n)^t, x_n)=(I(P_{\bf w}^n \backslash x_n)^t, x_n)$;
			\item[(4)] $(I(P_{\bf w}^n)^t, x_{n-1})=(I(P_{\bf w}^n \backslash x_{n-1})^t, x_{n-1})$;
			\item[(5)] $((I(P_{\bf w}^n)^t: x_{n-1}), x_n)=((I(P_{\bf w}^n \backslash x_n)^t: x_{n-1}), x_n)$.
			\end{itemize}
		\end{Lemma}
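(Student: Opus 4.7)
The plan is to prove each of the five identities by establishing both inclusions, systematically exploiting the hypothesis ${\bf w}_{n-1}=1$, which makes $x_{n-1}x_n$ itself a minimal generator of $I(P_{\bf w}^n)$ and, moreover, the unique generator in which the variable $x_n$ appears. Throughout, I would use the decomposition
$$I(P_{\bf w}^n)=I(P_{\bf w}^n\backslash x_n)+(x_{n-1}x_n),$$
and the analogous statement that $I(P_{\bf w}^n)=I(P_{\bf w}^n\backslash x_{n-1})+((x_{n-2}x_{n-1})^{{\bf w}_{n-2}})+(x_{n-1}x_n)$, where the last two summands both lie in $(x_{n-1})$.

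I would handle (3) and (4) first, as they are the most transparent. Expanding the $t$-th power via the splitting above, every summand that contains at least one factor from the single-edge contribution $(x_{n-1}x_n)$ is divisible by $x_n$, and those that use the $(x_{n-2}x_{n-1})^{{\bf w}_{n-2}}$ or $x_{n-1}x_n$ contribution are divisible by $x_{n-1}$. Hence such summands become redundant once the appropriate variable is adjoined, reducing the left-hand side to the claimed form on the right.

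Part (1) then follows by a direct exponent-counting argument. The inclusion $I(P_{\bf w}^n)^{t-1}\subseteq (I(P_{\bf w}^n)^t:x_{n-1}x_n)$ is immediate since $x_{n-1}x_n\in I(P_{\bf w}^n)$. For the reverse, take a monomial $u$ with $ux_{n-1}x_n\in I(P_{\bf w}^n)^t$ and write
$$u\,x_{n-1}x_n=h\cdot\prod_{i=1}^{n-1}\bigl((x_ix_{i+1})^{{\bf w}_i}\bigr)^{k_i},\qquad \sum k_i=t.$$
Since $x_n$ appears only in the generator corresponding to $i=n-1$, the exponent of $x_n$ on the left-hand side forces $k_{n-1}\geq 1$, so dividing out this factor shows $u\in I(P_{\bf w}^n)^{t-1}$.

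Parts (2) and (5) are the most delicate, and I expect the bookkeeping in (5) to be the main obstacle. In both, one inclusion is clean: $(I(P_{\bf w}^n\backslash x_{n-1})^t,x_{n-1})\subseteq ((I(P_{\bf w}^n)^t:x_n),x_{n-1})$ in (2), and the analogous statement for (5), follow from (3) and (4) by applying colon termwise. For the opposite direction of (2), if a monomial $u$ lies in $(I(P_{\bf w}^n)^t:x_n)$, then $ux_n$ has a factorization whose only contribution to $x_n$ must come from copies of $x_{n-1}x_n$; once $x_n$ is cancelled, the remaining $x_{n-1}$ absorbs into $(x_{n-1})$, leaving a factor in $I(P_{\bf w}^n\backslash x_{n-1})^t$ modulo $x_{n-1}$. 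For (5), the complication is that $x_{n-1}$ appears in two distinct generators, $(x_{n-2}x_{n-1})^{{\bf w}_{n-2}}$ and $x_{n-1}x_n$, and since ${\bf w}_{n-2}$ is arbitrary one must track how many copies of the high-weight generator are used; however, after adjoining $x_n$ the $x_{n-1}x_n$ contribution collapses, and the colon reduces to the corresponding operation performed inside $I(P_{\bf w}^n\backslash x_n)^t=I(P_{\bf w}^{n-1})^t$. The hypothesis ${\bf w}_{n-1}=1$ is what keeps the $x_{n-1}x_n$ contribution linear and the exponent comparisons manageable throughout.
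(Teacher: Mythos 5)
The paper itself offers no argument here: the lemma is quoted verbatim from \cite[Lemma 4.10]{ZDCL}, so there is no in-paper proof to compare with, and your proposal has to stand on its own. Your overall plan is sound: the decomposition $I(P_{\bf w}^n)=I(P_{\bf w}^n\backslash x_n)+(x_{n-1}x_n)$, the observation that $x_n$ occurs in exactly one generator and that this generator has weight one, and the verification of each identity by factorizations $u\cdot m=h\prod_{i=1}^{n-1}\bigl((x_ix_{i+1})^{{\bf w}_i}\bigr)^{k_i}$ with $\sum k_i=t$ do prove all five parts, and your treatment of (3), (4) and the easy inclusions is fine.

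There is, however, a genuine gap in your argument for (1). From $u\,x_{n-1}x_n=h\prod_{i}\bigl((x_ix_{i+1})^{{\bf w}_i}\bigr)^{k_i}$ you conclude that the $x_n$-exponent on the left ``forces $k_{n-1}\geq 1$''. This is not valid: $h$ is an arbitrary monomial and may supply every occurrence of $x_n$ on the right, so the chosen factorization can perfectly well have $k_{n-1}=0$; in fact the existence of \emph{some} factorization with $k_{n-1}\geq 1$ is equivalent to the conclusion $u\in I(P_{\bf w}^n)^{t-1}$ you are trying to establish, so as written the step assumes what is to be proved. You must treat the case $k_{n-1}=0$ separately: then $x_n\mid h$, so $u\,x_{n-1}=(h/x_n)\prod_{i\leq n-2}g_i^{k_i}$; if $k_{n-2}\geq 1$, cancel the single $x_{n-1}$ against one copy of $(x_{n-2}x_{n-1})^{{\bf w}_{n-2}}$, which leaves $u$ divisible by a product of $t-1$ generators, and if $k_{n-2}=0$ as well, then $x_{n-1}\mid h$ and $u\in I(P_{\bf w}^n)^t\subseteq I(P_{\bf w}^n)^{t-1}$. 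The same imprecision (``the only contribution to $x_n$ must come from copies of $x_{n-1}x_n$'') occurs in your sketch of (2), though there it is harmless because whenever $k_{n-1}\geq1$ or $k_{n-2}\geq1$ one gets $x_{n-1}\mid u$ and lands in $(x_{n-1})$. Also, (5) is not the delicate case you anticipate: for a monomial $u$ with $x_n\nmid u$, counting the $x_n$-degree of $u\,x_{n-1}$ shows that no factorization can involve $x_{n-1}x_n$ at all, so $u\,x_{n-1}\in I(P_{\bf w}^n\backslash x_n)^t$ immediately. A shorter uniform route to all five statements is the standard fact that for a monomial ideal $J$ and a monomial $m$ the colon $(J:m)$ is generated by $g/\gcd(g,m)$ as $g$ runs over the generators of $J=I(P_{\bf w}^n)^t$ (products of $t$ edge generators); the three cases $k_{n-1}\geq 1$; $k_{n-1}=0$ and $k_{n-2}\geq 1$; and $k_{n-1}=k_{n-2}=0$ then dispose of (1), (2) and (5) simultaneously, with (3) and (4) as you argued.
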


  \begin{Theorem}\label{pathsubgraph}
    	Let $P_{\bf w}^n$  be a path with only  one edge of non-trivial weight. Let ${\bf w}_i\ge 2$, ${\bf w}_j=1$ for all $j\neq i$, and let $P_{\bf w}^{n-i+1}$ be the induced subgraph of $P_{\bf w}^n$ on the set $\{x_i,x_{i+1},\ldots,x_n\}$ and $H_{\bf w}$ be any subgraph of $P_{\bf w}^{n-i+1}$. Then for any $1\leq t<\lceil\frac{n+1}{2}\rceil$, we have
    	\begin{itemize}
    		\item[(1)] $\depth\left(S/(I(P_{\bf w}^n)^t+I(H_{\bf w}))\right)\geq
    		\begin{cases}
    			\lceil \frac{n-t+1}{3}\rceil, &\text{if $i=1$,} \\ 	
    			\lceil \frac{n-t}{3}\rceil, &\text{if $i\geq 2$.} \\ 	
    		\end{cases} $
    		\item[(2)] If $i\geq 2$ and  $P_{\bf w}^{i}$ is the induced subgraph of $P_{\bf w}^n$ on  $\{x_1,x_{2},\ldots,x_i\}$, then
    		$$
    		\depth\left(S/(I(P_{\bf w}^n)^t+I(H_{\bf w})+I(P_{\bf w}^{i}))\right)\geq \lceil \frac{n-t}{3}\rceil.
    		$$
    	\end{itemize}
    \end{Theorem}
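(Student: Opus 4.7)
The plan is to prove parts (1) and (2) simultaneously by induction on $t\ge 1$. For the base case $t=1$, observe that $H_{\bf w}$ is a subgraph of $P_{\bf w}^{n-i+1}\subseteq P_{\bf w}^n$, and in part (2) also $P_{\bf w}^i$ is an induced subpath of $P_{\bf w}^n$, so $I(H_{\bf w})\subseteq I(P_{\bf w}^n)$ and $I(P_{\bf w}^i)\subseteq I(P_{\bf w}^n)$; thus the ideal in question collapses to $I(P_{\bf w}^n)$, and the required bound follows from Lemma~\ref{nontrivialpath1} upon checking that the minimum of its two expressions (with $b=0$ since ${\bf w}_{i+2}=1$) dominates $\lceil n/3\rceil$ when $i=1$ and $\lceil(n-1)/3\rceil$ when $i\ge 2$.

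For the inductive step, I would split using the far endpoint $x_n$, which lies on the trivial edge $x_{n-1}x_n$ whenever $i\ne n-1$; the excluded boundary case can be reduced by splitting at $x_1$ instead, or handled directly. Write $J$ for the ideal under consideration in either (1) or (2), and form the two short exact sequences
\begin{gather*}
0\to S/(J:x_n)(-1)\stackrel{\cdot x_n}{\longrightarrow} S/J\to S/(J,x_n)\to 0,\\
0\to S/(J:x_nx_{n-1})(-1)\stackrel{\cdot x_{n-1}}{\longrightarrow} S/(J:x_n)\to S/((J:x_n),x_{n-1})\to 0.
\end{gather*}
Using Lemma~\ref{path1}(1)--(3), one identifies $(J,x_n)$ with $I(P_{\bf w}^{n-1})^t + I(H_{\bf w}\setminus x_n) + (x_n)$, the double colon $(J:x_nx_{n-1})$ with $I(P_{\bf w}^n)^{t-1} + (I(H_{\bf w}):x_nx_{n-1})$, and $((J:x_n),x_{n-1})$ with an ideal attached to $P_{\bf w}^n\setminus x_{n-1}$ plus additional generators coming from $H_{\bf w}$. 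The first is bounded via the inductive hypothesis in $n$; the second via the inductive hypothesis at level $t-1$ (after noting that the colon of $I(H_{\bf w})$ remains a subgraph-type ideal); and the third by Lemma~\ref{sum1} together with Lemma~\ref{path} and Lemma~\ref{path3} applied to the now-disconnected trivially weighted components. Combining via Lemma~\ref{exact}(1) applied twice yields the desired lower bound on $\depth(S/J)$.

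For part (2), the extra ideal $I(P_{\bf w}^i)$ is supported on the disjoint variable set $\{x_1,\dots,x_i\}$, so Lemma~\ref{sum1}(1) cleanly contributes a summand $\depth(S'/I(P_{\bf w}^i))$ to each of the three quotient depths above; together with Lemma~\ref{path}, this matches the drop from $\lceil(n-t+1)/3\rceil$ in the $i=1$ subcase of (1) to $\lceil(n-t)/3\rceil$ in the $i\ge 2$ subcases of (1) and in (2). The main obstacle will be the casework: one must track the bound according to whether $i=1$ or $i\ge 2$ (which give different target formulas), whether the edge $x_{n-1}x_n$ belongs to $H_{\bf w}$ (which can trivialize $(I(H_{\bf w}):x_nx_{n-1})$ in one subcase and force us to use different splits), and boundary situations with small $n-i$ where the assumption $t<\lceil(n+1)/2\rceil$ becomes critical. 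The ceiling arithmetic in $\lceil(n-t+1)/3\rceil$ and $\lceil(n-t)/3\rceil$ must then be verified delicately so that the minimum of the candidate bounds produced by the two sequences realizes, rather than improves upon, the target.
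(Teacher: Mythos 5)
There is a genuine gap, and it is precisely in the choice of splitting vertex. Your first exact sequence splits $J=I(P_{\bf w}^n)^t+I(H_{\bf w})$ at the \emph{endpoint} $x_n$, so the branch $S/(J,x_n)$ is (via $I(P_{\bf w}^n)^t+(x_n)=I(P_{\bf w}^{n-1})^t+(x_n)$) governed by the same statement for the path on $n-1$ vertices, which only yields $\lceil\frac{(n-1)-t+1}{3}\rceil=\lceil\frac{n-t}{3}\rceil$ when $i=1$ (resp.\ $\lceil\frac{n-t-1}{3}\rceil$ when $i\ge 2$). Since Lemma \ref{exact}(1) requires \emph{both} branches to meet the target, this falls one short of $\lceil\frac{n-t+1}{3}\rceil$ (resp.\ $\lceil\frac{n-t}{3}\rceil$) whenever $n-t\equiv 0$ (resp.\ $n-t\equiv 1$) modulo $3$, and these bounds for subgraph-perturbed powers are in general sharp, so the loss cannot be recovered from this sequence. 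The paper's proof splits first at the \emph{second-to-last} vertex $x_{n-1}$ (for $i=1$; at $x_2$ when $i\ge 4$): deleting it isolates the endpoint $x_n$ (resp.\ $x_1$), and Lemma \ref{sum1} then adds the crucial $+1$, giving $\depth(S/(A,x_{n-1}))\geq\lceil\frac{(n-2)-t+1}{3}\rceil+1=\lceil\frac{n-t+2}{3}\rceil$. This choice is not cosmetic; it is what makes the induction close, and your outline as written cannot reach the stated bound.

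Two further points. For part (2) you invoke Lemma \ref{sum1}(1) to "cleanly contribute" $\depth$ of $S'/I(P_{\bf w}^i)$, but its hypotheses are never satisfied: $I(P_{\bf w}^n)^t$ involves every variable $x_1,\ldots,x_n$, so $I(P_{\bf w}^i)$ is not supported on a disjoint variable set. The paper instead proves (2) by its own induction, splitting at $x_1$ (when $i=2$) or $x_2$ (when $i\geq 3$) and reducing to part (1) together with Lemma \ref{path3}. Finally, your remark that $(I(H_{\bf w}):x_nx_{n-1})$ "remains a subgraph-type ideal" hides the hard part: depending on whether $x_{n-1}x_n$, $x_{n-2}x_{n-1}$ (and then $x_{n-3}x_{n-2}$) lie in $E(H_{\bf w})$, the colon acquires variable generators, and the resulting ideals contain powers of edge ideals of \emph{disconnected} graphs, to which the inductive statement does not directly apply; the paper needs a four-way case analysis, an auxiliary filtration (the chain $K_\ell$ with maps given by multiplication by $x_{n-2}x_{n-1}$), and separate arguments for $(i,n,t)=(3,6,2)$ and for $i\ge 4$. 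So the difficulty is not merely "delicate ceiling arithmetic"; the decomposition itself has to be chosen differently.
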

\begin{proof}
Let $I=I(P_{\bf w}^n)$. We first prove the statement (1) by induction on $t$. 
    	
     If $t=1$, then $I(P_{\bf w}^n)^t+I(H_{\bf w})=I(P_{\bf w}^n)$. By Lemma \ref{nontrivialpath1}, we can get $$\depth (S/I(P_{\bf w}^n))\ge
    	\begin{cases}
    		\lceil \frac{n}{3}\rceil, &\text{if $i=1$,} \\ 	
    		\lceil \frac{n-1}{3}\rceil, &\text{if $i\geq 2$.} \\ 	
    	\end{cases} $$
    In the following, we can assume that  $t\geq 2$. Note that $\sqrt{I(P_{\bf w}^n)^t+I(H_{\bf w})}=\sqrt{I(P_{\bf w}^n)}$,
    we have  $\mathfrak{m}\notin Ass_S(I(P_{\bf w}^n)^t+I(H_{\bf w}))$ for all $i$, $t$ and $H_{\bf w}$, where $Ass_S(I(P_{\bf w}^n)^t+I(H_{\bf w}))$ is the set of  associated prime ideals of the $S$-module $S/(I(P_{\bf w}^n)^t+I(H_{\bf w}))$. So $\depth (S/(I(P_{\bf w}^n)^t+I(H_{\bf w})))\geq 1\geq  \lceil \frac{n-t+1}{3}\rceil$ if $n\leq 4$.

Now we assume that   $n\geq 5$ and $A=I(P_{\bf w}^n)^t+I(H_{\bf w})$.
First, we consider the case where  $i=1$ and divide into the following four cases:

       {\it Case $1$}:  If $x_{n-2}x_{n-1}, x_{n-1}x_{n}\notin E(H_{\bf w})$, then, by Lemma \ref{path1}, we see that
        \begin{align*}
        	(A,x_{n-1})&=I(P_{\bf w}^n\backslash x_{n-1})^t+I(H_{\bf w})+(x_{n-1}),\\
        	(A:x_{n-1}x_n)&=I(P_{\bf w}^n)^{t-1}+I(H_{\bf w}),\\
        ((A:x_{n-1}),x_n)&=(I(P_{\bf w}^n\backslash x_n)^{t}:x_{n-1})+I(H_{\bf w})+(x_n),\\
        (((A:x_{n-1}),x_n),x_{n-2})&=I(P_{\bf w}^n\backslash \{x_{n-2},x_n\})^{t}+I(H_{\bf w}\backslash x_{n-2})+(x_{n-2},x_n),\\
        	(((A:x_{n-1}),x_n):x_{n-2})&=
        	\begin{cases}
        		I(P_{\bf w}^n\backslash x_n)^{t-1}+I(H_{\bf w})+(x_n),  \text{if $x_{n-3}x_{n-2}\notin E(H_{\bf w})$,} \\ 	
        		B,\ \  \text{otherwise,} 	
        	\end{cases}
        \end{align*}
      where $B=I(P_{\bf w}^n\backslash \{x_{n-3},x_n\})^{t-1}+I(H_{\bf w}\backslash x_{n-3})+(x_{n-3},x_n)$.  Therefore, by  induction and   Lemma \ref{sum1}, we obtain that
        \begin{align*}
        	\depth(S/(A,x_{n-1}))&\geq \lceil \frac{(n-2)-t+1}{3}\rceil+1=\lceil \frac{n-t+2}{3}\rceil,\\
        	\depth(S/(A:x_{n-1}x_n))&\geq \lceil \frac{n-(t-1)+1}{3}\rceil=\lceil \frac{n-t+2}{3}\rceil,\\
        	\depth(S/(((A:x_{n-1}),x_n),x_{n-2}))&\geq \lceil \frac{(n-3)-t+1}{3}\rceil+1=\lceil \frac{n-t+1}{3}\rceil.
        \end{align*}
       Furthermore,   if $x_{n-3}x_{n-2}\notin E(H_{\bf w})$, then $\depth(S/(((A:x_{n-1}),x_n):x_{n-2}))\geq  \lceil \frac{(n-1)-(t-1)+1}{3}\rceil=\lceil \frac{n-t+1}{3}\rceil$;
       otherwise, we let $K_\ell=I(P_{\bf w}^n\backslash \{x_{n-3},x_n\})^{\ell}+I(H_{\bf w}\backslash x_{n-3})+(x_{n-3},x_n)$  for all $\ell\in  [t-1]$, then $K_1=I(P_{\bf w}^n\backslash \{x_{n-3},x_n\})+I(H_{\bf w}\backslash x_{n-3})+(x_{n-3},x_n)$,
$(K_\ell,x_{n-2}x_{n-1})=I(P_{\bf w}^n\backslash \{x_{n-3},x_{n-2},x_{n-1},x_n\})^{\ell}+I(H_{\bf w}\backslash x_{n-3})+(x_{n-3},x_{n-2}x_{n-1},x_n)$ for all $\ell\in [t-1]$,  $(K_\ell:x_{n-2}x_{n-1})=K_{\ell-1}$ for all $2\leq \ell\leq t-1$,
and $K_{t-1}=(((A:x_{n-1}),x_n):x_{n-2})$.
By induction, Lemmas \ref{sum1} and \ref{nontrivialpath1}, we get that $\depth(S/K_1)\ge \lceil \frac{n-4}{3}\rceil+1=\lceil \frac{n-1}{3}\rceil$ and
$\depth(S/(K_\ell,x_{n-2}x_{n-1}))\geq \lceil \frac{(n-4)-(t-1)+1}{3}\rceil+1=\lceil \frac{n-t+1}{3}\rceil$ for any $2\leq \ell\leq t-1$.
Applying  Lemma \ref{exact} to the following short exact sequences
\begin{gather*}
       	\begin{matrix}
       		0 & \longrightarrow & \frac{S}{K_{t-2}}(-2) & \xrightarrow[]{\cdot x_{n-2}x_{n-1}} & \frac{S}{K_{t-1}} & \longrightarrow & \frac{S}{(K_{t-1},x_{n-2}x_{n-1})} & \longrightarrow & 0,\\
       		0 & \longrightarrow & \frac{S}{K_{t-3}}(-2) & \xrightarrow[]{\cdot x_{n-2}x_{n-1}} & \frac{S}{K_{t-2}} & \longrightarrow & \frac{S}{(K_{t-2},x_{n-2}x_{n-1})} & \longrightarrow & 0,\\
       	   	&  &\vdots&  &\vdots&  &\vdots&  &\\
       		0 & \longrightarrow & \frac{S}{K_1}(-2) & \xrightarrow[]{\cdot x_{n-2}x_{n-1}} & \frac{S}{K_2} & \longrightarrow & \frac{S}{(K_2,x_{n-2}x_{n-1})} & \longrightarrow & 0,\\
       	\end{matrix}
       \end{gather*}
we deduce  $\depth(S/K_{t-1})\geq \lceil\frac{n-t+1}{3}\rceil$. In both cases, we always have $\depth(S/K_{t-1})\geq \lceil\frac{n-t+1}{3}\rceil$.

 Using Lemma \ref{exact} to the  following short exact sequences
   \begin{gather}\label{eqn: equality10}
       	\begin{matrix}
       	0 \!&\!\longrightarrow \!&\! \frac{S}{A:x_{n-1}}(-1) \!&\! \stackrel{ \cdot x_{n-1}}\longrightarrow \!&\!\frac{S}{A} \!&\!\longrightarrow\!&\! \frac{S}{(A,x_{n-1})} \!&\! \longrightarrow \!&\! 0,\\
       	0 \!&\! \longrightarrow \!&\! \frac{S}{A:x_{n-1}x_{n}}(-1) \!&\! \stackrel{ \cdot x_n}\longrightarrow \!&\!\frac{S}{A:x_{n-1}}\!&\!\longrightarrow \!&\! \frac{S}{((A:x_{n-1}),x_{n})} \!&\! \longrightarrow \!&\! 0,\\
       	0 \!&\! \longrightarrow \!&\! \frac{S}{((A:x_{n-1}),x_{n}):x_{n-2}}(-1) \!&\! \stackrel{ \cdot x_{n-2}}\longrightarrow  \!&\! \frac{S}{((A:x_{n-1}),x_{n})} \!&\! \longrightarrow\!&\! \frac{S}{(((A:x_{n-1}),x_{n}),x_{n-2})} \!&\! \longrightarrow \!&\! 0,
       	\end{matrix}
       \end{gather}	
we obtain that $\depth (S/A)\geq \lceil\frac{n-t+1}{3}\rceil$.

 {\it Case $2$}:  If $x_{n-2}x_{n-1}, x_{n-1}x_{n}\in E(H_{\bf w})$, then by Lemma \ref{path1}, we have
\begin{align*}
	(A,x_{n-1})&=I(P_{\bf w}^n\backslash x_{n-1})^t+I(H_{\bf w}\backslash x_{n-1})+(x_{n-1}),\\
	(A:x_{n-1})&=I(P_{\bf w}^n\backslash \{x_{n-2},x_n\})^{t}+I(H_{\bf w}\backslash \{x_{n-2},x_n\})+(x_{n-2},x_n).
\end{align*}
By induction  and Lemma \ref{sum1}, we get  that
\begin{align*}
	\depth(S/(A,x_{n-1}))&\geq\lceil \frac{(n-2)-t+1}{3}\rceil+1=\lceil \frac{n-t+2}{3}\rceil,\\
	\depth(S/(A:x_{n-1}))&\geq \lceil \frac{(n-3)-t+1}{3}\rceil+1=\lceil \frac{n-t+1}{3}\rceil.
\end{align*}
It follows from Lemma \ref{exact} and the following exact sequence
\begin{gather*}
	\begin{matrix}
		0&\longrightarrow&\frac{S}{A:x_{n-1}}(-1)&\stackrel{ \cdot x_{n-1}} \longrightarrow&\frac{S}{A}&\longrightarrow&\frac{S}{(A,x_{n-1})}&\longrightarrow&0
	\end{matrix}
\end{gather*}
that $\depth (S/A)\geq \lceil\frac{n-t+1}{3}\rceil$.

 {\it Case $3$}:  If $x_{n-2}x_{n-1}\in E(H_{\bf w})$ and $x_{n-1}x_{n}\notin E(H_{\bf w})$, then by Lemma \ref{path1}, we have
     \begin{align*}
     	(A,x_{n-1})&=I(P_{\bf w}^n\backslash x_{n-1})^t+I(H_{\bf w}\backslash x_{n-1})+(x_{n-1}),\\
     	(A:x_{n-1}x_n)&=I(P_{\bf w}^n\backslash \{x_{n-2}\})^{t-1}+I(H_{\bf w}\backslash x_{n-2})+(x_{n-2}),\\
     	((A:x_{n-1}),x_n)&=(I(P_{\bf w}^n\backslash \{x_{n-2},x_n\})^{t})+I(H_{\bf w}\backslash x_{n-2})+(x_{n-2},x_n).
     \end{align*}
   Therefore, by  induction and  Lemma \ref{sum1},  we can deduce
     that
     \begin{align*}
     	\depth(S/(A,x_{n-1}))&\geq \lceil \frac{(n-2)-t+1}{3}\rceil+1=\lceil \frac{n-t+2}{3}\rceil,\\
     	\depth(S/((A:x_{n-1}),x_n))&\geq \lceil \frac{(n-3)-t+1}{3}\rceil+1=\lceil \frac{n-t+1}{3}\rceil.
     \end{align*}
Similarly to the calculation of the depth of $S/K_{t-1}$ in {\it Case $1$},  we obtain $\depth(S/(A:x_{n-1}x_n))\geq \lceil \frac{n-t+1}{3}\rceil$. Applying Lemma \ref{exact} to the first two exact sequences of the sequences (\ref{eqn: equality10}),  we can get $\depth (S/A)\geq \lceil\frac{n-t+1}{3}\rceil$.

 {\it Case $4$}:  If $x_{n-2}x_{n-1}\notin E(H_{\bf w})$ and $x_{n-1}x_{n}\in E(H_{\bf w})$, then by Lemma \ref{path1}, we have
    \begin{align*}
    	(A,x_{n-1})&=I(P_{\bf w}^n\backslash x_{n-1})^t+I(H_{\bf w}\backslash x_{n-1})+(x_{n-1}),\\
    	((A:x_{n-1}),x_{n-2})&=(I(P_{\bf w}^n\backslash \{x_{n-2},x_n\})^{t})+I(H_{\bf w}\backslash \{x_{n-2},x_n\})+(x_{n-2},x_n),\\
    (A:x_{n-1}x_{n-2})&=
    	\begin{cases}
I(P_{\bf w}^n\backslash x_{n})^{t-1}+I(H_{\bf w}\backslash x_{n})+(x_{n}), \ \text{if $x_{n-3}x_{n-2}\notin E(H_{\bf w})$,} \\ 	
C, \ \ \text{otherwise,} \\ 	
        \end{cases}
    \end{align*}
  where $C=I(P_{\bf w}^n\backslash \{x_{n-3},x_n\})^{t-1}+I(H_{\bf w}\backslash \{x_{n-3},x_n\})+(x_{n-3},x_n)$.

Therefore, from  induction and Lemma \ref{sum1}, we can deduce  that
\begin{align*}
	\depth(S/(A,x_{n-1}))&\geq \lceil \frac{(n-2)-t+1}{3}\rceil+1=\lceil \frac{n-t+2}{3}\rceil,\\
	\depth(S/((A:x_{n-1}),x_{n-2}))&\geq \lceil \frac{(n-3)-t+1}{3}\rceil+1=\lceil \frac{n-t+1}{3}\rceil.
\end{align*}
Similarly to the calculation of the depth of $S/(((A:x_{n-1}),x_n):x_{n-2})$ in {\it Case $1$},
we can get $\depth(S/(A:x_{n-1}x_{n-2}))\geq \lceil \frac{n-t+1}{3}\rceil$.
Again, using Lemma \ref{exact} to the first two exact sequences of the sequence (\ref{eqn: equality10}),  we can get
 $\depth (S/A)\geq \lceil\frac{n-t+1}{3}\rceil$.

Now,  we consider the case where $i\ge 2$. If  $n\leq 5$ and $t\geq 2$, or   $i=3$, $n\leq 6$ and $t\geq 3$, then
 $\mathfrak{m}\notin Ass_S(I(P_{\bf w}^n)^t+I(H_{\bf w}))$ for all $i$, $t$ and $H_{\bf w}$. So $\depth (S/(I(P_{\bf w}^n)^t+I(H_{\bf w})))\geq 1\geq  \lceil \frac{n-t}{3}\rceil$.
  Therefore, in the following, we consider the four remaining cases: (I) $i=2$, $n\geq 6$ and  $t\geq 2$; (II) $i=3$, $n=6$ and  $t=2$; (III) $i=3$, $n\geq 7$ and  $t\geq 2$;  (IV) $i\ge 4$,   $n\ge 6$ and  $t\ge 2$.

First, if the case (I) or (III) occurs, then, by similar arguments as in the
case where $i=1$, we can deduce
 $\depth (S/A)\geq \lceil \frac{n-t}{3}\rceil$.

If the case  (II) occurs,  then it remains to show that $\depth (S/A)\geq 2$. If $x_{4}x_{5}\in E(H_{\bf w})$, then $\depth (S/A)\geq 2$ by similar arguments as in  {\it Case $2$} and {\it Case $3$} for $i=1$. Otherwise, there are two cases to consider:

(a) If $x_{5}x_{6}\in E(H_{\bf w})$, then by Lemma \ref{path1}, we have
    	\begin{align*}
    		(A,x_{5})&=I(P_{\bf w}^6\backslash x_{5})^2+I(H_{\bf w}\backslash x_5)+(x_{5}),\\
    		(A:x_{5}x_4)&=\begin{cases}
    			I(P_{\bf w}^6\backslash x_{6})+(x_{6}), &\text{if $x_{3}x_{4}\notin E(H_{\bf w})$,} \\ 	
    			I(P_{\bf w}^6\backslash x_6)+(x_{3}^{\bf w}x_4^{{\bf w}-1},x_6), &\text{if $x_{3}x_{4}\in E(H_{\bf w})$,} \\ 	
    		\end{cases}\\
    		((A:x_{5}),x_4)&=I(P_{\bf w}^6\backslash \{x_4,x_6\})^{2}+(x_4,x_6).
    	\end{align*}
  By  the inductive hypothesis and Lemma \ref{sum1}, we get that  $\depth(S/((A:x_5),x_4))\geq 2$ and $\depth(S/(A,x_5))\geq 2$.
  To compute  depth of $S/A$, we need to compute  $\depth(S/(A:x_{5}x_4))$ and  distinguish between two subcases: (i) if  $x_{3}x_{4}\notin E(H_{\bf w})$, then $\depth(S/(A:x_{5}x_4))\geq 2$
  by Lemma \ref{nontrivialpath1}(3).  (ii) If  $x_{3}x_{4}\in E(H_{\bf w})$, then $(A:x_{5}x_4x_3)=(x_2,(x_3x_4)^{{\bf w}-1},x_4x_5,x_6)$ and $((A:x_{5}x_4),x_3)=I(P_{\bf w}^6\backslash \{x_3,x_6\})+(x_3,x_6)$ by Lemma \ref{path1}. It follows
  from Lemmas \ref{sum1} and \ref{nontrivialpath1} that $\depth(S/(A:x_{5}x_4x_3))=2$ and $\depth(S/((A:x_{5}x_4),x_3))=2$.
  Applying  Lemma \ref{exact} to the  following exact sequences
  \begin{gather}\label{eqn: equality11}
	\begin{matrix}
		0 & \longrightarrow & \frac{S}{A:x_{5}x_4x_3}(-1)  & \stackrel{ \cdot x_3} \longrightarrow  & \frac{S}{A:x_{5}x_4} & \longrightarrow & \frac{S}{((A:x_{5}x_4),x_3)} & \longrightarrow & 0,\\
	0 & \longrightarrow & \frac{S}{A:x_{5}x_{4}}(-1)  & \stackrel{ \cdot x_4} \longrightarrow  & \frac{S}{A:x_{5}} & \longrightarrow & \frac{S}{((A:x_{5}),x_{4})} & \longrightarrow & 0,\\
0 & \longrightarrow & \frac{S}{A:x_{5}}(-1)  & \stackrel{ \cdot x_5} \longrightarrow  & \frac{S}{A} & \longrightarrow & \frac{S}{(A,x_{5})} & \longrightarrow & 0,
\end{matrix}
\end{gather}
we  deduce that $\depth (S/A)\geq 2$.

(b)  If $x_{5}x_{6}\notin E(H_{\bf w})$, then $(A:x_{5}x_6)=I(P_{\bf w}^6)+I(H_{\bf w})$ and $(A,x_{5}x_6)=I(P_{\bf w}^6)^{2}+I(H_{\bf w})+(x_5x_6)$ by Lemma \ref{path1}. It follows  from Lemma \ref{nontrivialpath1} and the above (a) that
$\depth(S/(A:x_{5}x_6))\geq 2$ and $\depth(S/(A,x_{5}x_6))\geq 2$. This forces $\depth (S/A)\geq 2$ by using Lemma \ref{exact} to the following short exact sequence
\begin{gather*}\label{eqn: equality28}
	\begin{matrix}
		0 & \longrightarrow & \frac{S}{A:x_{5}x_6}(-2)  & \xrightarrow[]{\cdot x_5x_6}  & \frac{S}{A} & \longrightarrow & \frac{S}{(A,x_{5}x_6)} & \longrightarrow & 0.
	\end{matrix}
\end{gather*}

If the case (IV) occurs,  then
\begin{align*}
	(A,x_{2})&=I(P_{\bf w}^n\backslash x_{2})^t+I(H_{\bf w})+(x_{2}),\\
	(A:x_{2}x_1)&=I(P_{\bf w}^n)^{t-1}+I(H_{\bf w}),\\
	((A:x_{2}),x_1)&=(I(P_{\bf w}^n\backslash x_1)^{t}:x_2)+I(H_{\bf w})+(x_1),\\
	(((A:x_{2}),x_1):x_3)&=I(P_{\bf w}^n\backslash x_1)^{t-1}+I(H_{\bf w})+(x_1),\\
	(((A:x_{2}),x_1),x_3)&=I(P_{\bf w}^n\backslash \{x_1,x_3\})^{t}+I(H_{\bf w})+(x_1,x_3).
\end{align*}
By  induction and Lemma \ref{sum1}, we can deduce  that
\begin{align*}
	\depth(S/(A,x_2))&\geq \lceil \frac{(n-2)-t}{3}\rceil+1=\lceil \frac{n-t+1}{3}\rceil,\\
	\depth(S/(A:x_{2}x_1))&\geq \lceil \frac{n-(t-1)}{3}\rceil=\lceil \frac{n-t+1}{3}\rceil,\\
	\depth(S/(((A:x_{2}),x_1):x_3))&\geq \lceil \frac{(n-1)-(t-1)}{3}\rceil=\lceil \frac{n-t}{3}\rceil,\\
	\depth(S/(((A:x_{2}),x_1),x_3))&\geq \lceil \frac{(n-3)-t}{3}\rceil+1=\lceil \frac{n-t}{3}\rceil.
\end{align*}
By  Lemma \ref{exact} and  the following short exact sequences
\begin{gather*}
	\begin{matrix}
		0 & \longrightarrow & \frac{S}{A:x_{2}}(-1)  & \stackrel{ \cdot x_2} \longrightarrow  & \frac{S}{A} & \longrightarrow & \frac{S}{(A,x_{2})} & \longrightarrow & 0,\\
		0 & \longrightarrow & \frac{S}{A:x_{2} x_{1}}(-1)  & \stackrel{ \cdot x_1} \longrightarrow  & \frac{S}{A:x_{2}} & \longrightarrow & \frac{S}{((A:x_{2}),x_{1})} & \longrightarrow & 0,\\
		0 & \longrightarrow & \frac{S}{((A:x_{2}),x_{1}):x_3}(-1)  & \stackrel{ \cdot x_3} \longrightarrow  & \frac{S}{((A:x_{2}),x_{1})} & \longrightarrow & \frac{S}{(((A:x_{2}),x_{1}),x_{3})} & \longrightarrow & 0,\\
	\end{matrix}
\end{gather*}
we can get $\depth (S/A)\geq \lceil\frac{n-t}{3}\rceil$.

\medskip
Final, we will prove the statement (2) by induction on $t$, its proof  is similar as in the previous case.

 If $t=1$, then $I(P_{\bf w}^n)^t+I(P_{\bf w}^i)+I(H_{\bf w})=I(P_{\bf w}^n)$.  So $\depth (S/I(P_{\bf w}^n))=\lceil \frac{n-1}{3}\rceil$ by Lemma \ref{nontrivialpath1}, since $i\geq 2$.

Next, we assume that  $t\geq 2$. Since
 $\sqrt{I(P_{\bf w}^n)^t+I(P_{\bf w}^i)+I(H_{\bf w})}=\sqrt{I(P_{\bf w}^n)}$, $\mathfrak{m}\notin Ass_S(I(P_{\bf w}^n)^t+I(P_{\bf w}^i)+I(H_{\bf w}))$ for all $i$, $t$ and $H_{\bf w}$. So $\depth (S/(I(P_{\bf w}^n)^t+I(P_{\bf w}^i)+I(H_{\bf w}))\geq 1\geq  \lceil \frac{n-t}{3}\rceil$ if $n\leq 5$.

In the following, we assume that   $n\geq 6$ and $D=I(P_{\bf w}^n)^t+I(P_{\bf w}^i)+I(H_{\bf w})$.

(i) If $i=2$, then $(D:x_1)=I(P_{\bf w}^n\backslash x_2)^t+I(H_{\bf w}\backslash x_2)+(x_2)$ and $(D,x_1)=I(P_{\bf w}^n\backslash x_1)^t+I(H_{\bf w})+(x_1)$
 by Lemma \ref{path1}, it follows that $\depth(S/(D:x_1))\geq \lceil\frac{(n-2)-t+1}{3}\rceil+1=\lceil\frac{n-t+2}{3}\rceil$  and $\depth(S/(D,x_1))\geq \lceil\frac{(n-1)-t+1}{3}\rceil=\lceil\frac{n-t}{3}\rceil$ by  Lemmas \ref{sum1}, \ref{path3} and  the statement (1). By  Lemma \ref{exact} and  the  exact sequence for $s=1$
\begin{gather}\label{eqn: equality12}
	\begin{matrix}
		0 & \longrightarrow & \frac{S}{D:x_s}(-1)  & \stackrel{ \cdot x_s} \longrightarrow  & \frac{S}{D} & \longrightarrow & \frac{S}{(D,x_s)} & \longrightarrow & 0,
	\end{matrix}
\end{gather}
we can get $\depth (S/D)\geq \lceil\frac{n-t}{3}\rceil$.

(ii) If $i\geq 3$, then $(D:x_2)=I(P_{\bf w}^n\backslash \{x_1,x_3\})^t+I(P_{\bf w}^i\backslash \{x_1,x_3\})+I(H_{\bf w}\backslash x_3)+(x_1,x_3)$ and $(D,x_2)=I(P_{\bf w}^n\backslash x_2)^t+I(P_{\bf w}^i\backslash x_2)+I(H_{\bf w})+(x_2)$ by Lemma \ref{path1}. By  induction,  Lemmas \ref{sum1},  \ref{path3} and the statement (1), we can obtain that
 $\depth(S/(D:x_2))\geq \lceil\frac{(n-3)-t}{3}\rceil+1=\lceil\frac{n-t}{3}\rceil$ and  $\depth(S/(D,x_2))\geq \lceil\frac{(n-2)-t}{3}\rceil+1=\lceil\frac{n-t+1}{3}\rceil$.
Using  Lemma \ref{exact} to the  short exact sequence (\ref{eqn: equality12}) for $s=2$,
we can get $\depth (S/D)\geq \lceil\frac{n-t}{3}\rceil$.
\end{proof}

	    \begin{Lemma}{\em (\cite[Lemma 3.10]{MTV})}\label{cycle3}
    		Let $C_{\bf w}^n$  be a trivially weighted $n$-cycle and  $H$  its non-empty subgraph. Then
		\[
\depth\left(S/(I(C_{\bf w}^n)^t+I(H))\right)\geq \lceil \frac{n-t+1}{3}\rceil \ \text{\ for all\ }t\ge 2.
\]
    \end{Lemma}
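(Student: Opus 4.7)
The plan is to prove the statement by induction on $t$, in parallel with the strategy used for paths in Lemma \ref{path3} and for mixed ideals in Theorem \ref{pathsubgraph}. For the base case $t = 2$, a direct short exact sequence argument combined with Lemma \ref{trivialcycle} (which gives $\depth(S/I(C_{\bf w}^n)) = \lceil \frac{n-1}{3}\rceil$) delivers the required bound $\lceil \frac{n-1}{3}\rceil$.

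For the inductive step $t \geq 3$, set $A = I(C_{\bf w}^n)^t + I(H)$. Since $H$ is non-empty, fix an edge $e = x_i x_{i+1} \in E(H)$ and let $v = x_i$, $u = x_{i+1}$. I would first apply the short exact sequence $0 \to S/(A:v)(-1) \xrightarrow{\cdot v} S/A \to S/(A, v) \to 0$ together with Lemma \ref{exact}(1), which reduces the problem to lower bounds on $\depth(S/(A, v))$ and $\depth(S/(A:v))$. Deleting $v$ opens the cycle into a path $P^{n-1}$, so $(A, v) = I(P^{n-1})^t + I(H \setminus v) + (v)$; Lemmas \ref{sum1} and \ref{path3} yield only $\depth(S/(A, v)) \geq \lceil \frac{n-t}{3}\rceil$, which needs to be upgraded by a nested short exact sequence against the vertex $x_{i+2}$ at distance two from $v$ along the cycle. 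The quotient $((A, v), x_{i+2})$ has both $v$ and $x_{i+2}$ removed from $C_{\bf w}^n$, producing an isolated vertex $x_{i+1}$ alongside a path $P^{n-3}$; by Lemmas \ref{sum1} and \ref{path3}, this quotient has depth at least $1 + \lceil \frac{n-t-2}{3}\rceil = \lceil \frac{n-t+1}{3}\rceil$, and the complementary colon $((A, v) : x_{i+2})$ is then controlled by another round of the inductive hypothesis.

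For the outer colon $(A : v)$, I would nest a short exact sequence against $u = x_{i+1}$. Since $uv = e \in E(C_{\bf w}^n)$, one obtains $(A : uv) = I(C_{\bf w}^n)^{t-1} + I(H)$, to which the inductive hypothesis applies and yields $\depth \geq \lceil \frac{n-t+2}{3}\rceil$; the complementary quotient $((A : v), u)$ removes both endpoints of $e$ from the cycle and reduces to a path-plus-subgraph situation bounded below via Lemma \ref{path3}.

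The main obstacle is the case analysis needed to describe the various ideals precisely: the exact forms of $(A, v)$, $((A, v), x_{i+2})$, $((A, v) : x_{i+2})$, $(A : v)$, $(A : uv)$, and $((A : v), u)$ all depend on which further edges of $H$ are incident to $v$, $u$, and $x_{i+2}$. Following the pattern of Theorem \ref{pathsubgraph}, one splits into subcases based on these local configurations of $H$ and checks in each case that every resulting ideal either satisfies the hypotheses of Lemma \ref{path3} or reduces to a smaller instance of the present lemma; the bounds are then assembled via successive applications of Lemma \ref{exact} to yield $\depth(S/A) \geq \lceil \frac{n-t+1}{3}\rceil$.
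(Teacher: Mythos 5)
The paper offers no proof of this lemma -- it is quoted from \cite{MTV} -- so your argument has to stand on its own, and as written it has a fatal gap in the branch $(A:v)$. You fix $e=uv$ to be an edge of $H$; since the weights are trivial, the monomial $uv$ is a generator of $I(H)\subseteq A$. Hence $(A:uv)=S$, not $I(C_{\bf w}^n)^{t-1}+I(H)$, and $u\in(A:v)$, so $((A:v),u)=(A:v)$: the nested sequence you propose degenerates to $0\to 0\to S/(A:v)\to S/(A:v)\to 0$ and gives no bound whatsoever on $\depth(S/(A:v))$, which is one of the two terms you must control after your very first exact sequence. The inductive drop from $t$ to $t-1$ -- the engine of the whole induction -- therefore never takes place. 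Moreover, even if you repaired this by coloning with an edge of $C_{\bf w}^n$ \emph{not} in $H$, the identification $(I(C_{\bf w}^n)^t:uv)=I(C_{\bf w}^n)^{t-1}$ is false for odd cycles because of even-connected generators: for the trivially weighted $5$-cycle, $x_3x_5\cdot x_1x_2=(x_2x_3)(x_5x_1)$ gives $x_3x_5\in(I^2:x_1x_2)\setminus I$, and for the $3$-cycle $x_3^2\in(I^2:x_1x_2)$. The remedy used in \cite{MTV} and in the paper's own Theorem \ref{cyclesubgraph} is to colon by an edge $x_jx_{j+1}$ of the cycle adjacent to an edge already contained in the non-power part, so that the neighboring variable absorbs the extra generators (compare Lemma \ref{path1}), and to induct by \emph{adding} edges to $H$ until the cycle closes up, not by working at an edge inside $H$.

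Two further steps are weaker than you suggest. The base case $t=2$ is not a ``direct'' consequence of Lemma \ref{trivialcycle}: that lemma concerns $S/I(C_{\bf w}^n)^t$, whereas here the ideal is $I(C_{\bf w}^n)^2+I(H)$, and this case is essentially as hard as the general one; note also that an induction on $t$ cannot be anchored at $t=1$, since the corresponding bound $\lceil\frac{n}{3}\rceil$ fails for $I(C_{\bf w}^n)$ when $n\equiv 1\ (\mathrm{mod}\ 3)$, so the $t=2$ case genuinely needs its own argument. Likewise, $((A,v):x_{i+2})$ is not ``another round of the inductive hypothesis'': it is a power of a path ideal coloned by a single vertex, which neither lowers the power nor returns a cycle; handling it requires a further colon by the adjacent vertex (as in Lemma \ref{path1} and the iterated $K_\ell$-type sequences in the proof of Theorem \ref{pathsubgraph}), with the attendant case analysis on which edges of $H$ meet $x_{i+1},x_{i+2},x_{i+3}$. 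So the proposal's overall scheme -- split at a vertex of $H$, then reduce the power by coloning along the chosen edge of $H$ -- does not go through as stated and would have to be rebuilt around exact sequences that multiply by whole edges $x_jx_{j+1}\notin E(H)$.
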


  \begin{Theorem}\label{cyclesubgraph}
   	Let  $n,t$ be two integers with $n\geq 4$ and  $1\leq t<\lceil \frac{n+1}{2}\rceil$. Let $C_{\bf w}^n$ be an $n$-cycle with only one edge of non-trivial weight and  $H_{\bf w}$ its subgraph with $V(H_{\bf w})=\{x_1,\ldots,x_{2t-2}\}$ and $E(H_{\bf w})=\bigcup\limits_{j=1}^{t-1}\{x_{2j-1}x_{2j}\}$. Assume that $H_{\bf w}^{'}$ is a non-empty subgraph of $H_{\bf w}$. Then
   	\[
   \depth(S/(I(C_{\bf w}^n)^t+I(H_{\bf w}^{'})))\geq \lceil \frac{n-t}{3}\rceil.
   \]
    \end{Theorem}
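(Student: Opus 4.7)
The plan is to prove the bound by induction on $t$. Since $V(H_{\bf w})=\emptyset$ when $t=1$, the non-emptiness of $H_{\bf w}^{'}$ forces the base case $t=2$. Throughout, set $A=I(C_{\bf w}^n)^t+I(H_{\bf w}^{'})$. The key structural observation is that $V(H_{\bf w}^{'})\subseteq\{x_1,\ldots,x_{2t-2}\}$ and $2t-2\le n-2$, so both $x_{n-1}$ and $x_n$ lie outside $V(H_{\bf w}^{'})$; in particular $(I(H_{\bf w}^{'}):x_n)=(I(H_{\bf w}^{'}):x_nx_{n-1})=I(H_{\bf w}^{'})$.

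I would chain two short exact sequences cutting the cycle at $x_n$ and at $x_{n-1}$,
\begin{gather*}
\begin{matrix}
0 & \longrightarrow & \frac{S}{A:x_n}(-1) & \stackrel{\cdot x_n}\longrightarrow & \frac{S}{A} & \longrightarrow & \frac{S}{(A,x_n)} & \longrightarrow & 0,\\
0 & \longrightarrow & \frac{S}{A:x_nx_{n-1}}(-1) & \stackrel{\cdot x_{n-1}}\longrightarrow & \frac{S}{A:x_n} & \longrightarrow & \frac{S}{((A:x_n),x_{n-1})} & \longrightarrow & 0,
\end{matrix}
\end{gather*}
and apply Lemma \ref{exact} to reduce the theorem to depth estimates on three auxiliary modules. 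For $S/(A,x_n)$, the cycle analogues of Lemma \ref{path1} from \cite{ZCLY} give $(A,x_n)=I(P_{\bf w}^{n-1})^t+I(H_{\bf w}^{'})+(x_n)$, where $P_{\bf w}^{n-1}$ is the path on $\{x_1,\ldots,x_{n-1}\}$ whose non-trivial edge sits at position $i=1$. Since $V(H_{\bf w}^{'})\subseteq V(P_{\bf w}^{n-1})$, Theorem \ref{pathsubgraph}(1) yields $\depth(S/(A,x_n))\ge\lceil((n-1)-t+1)/3\rceil=\lceil(n-t)/3\rceil$. The cofactor $S/((A:x_n),x_{n-1})$ is handled analogously: removing $x_{n-1}$ opens the colon into a path ideal whose depth is bounded by Theorem \ref{pathsubgraph}(1) or (2).

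For $S/(A:x_nx_{n-1})$, the weight-$1$ edge $x_{n-1}x_n$ satisfies $(I(C_{\bf w}^n)^t:x_nx_{n-1})=I(C_{\bf w}^n)^{t-1}$, so $A:x_nx_{n-1}=I(C_{\bf w}^n)^{t-1}+I(H_{\bf w}^{'})$. I would split into subcases according to whether the edge $x_{2t-3}x_{2t-2}$ lies in $H_{\bf w}^{'}$. If it does not, then $H_{\bf w}^{'}$ is already a non-empty subgraph of the matching $H_{\bf w}$ associated with $t-1$, and the inductive hypothesis gives $\depth\ge\lceil(n-t+1)/3\rceil\ge\lceil(n-t)/3\rceil$. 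If it does, a further short exact sequence in $x_{2t-2}$ peels off this edge: the quotient becomes a path ideal (cutting the cycle at $x_{2t-2}$) controlled by Theorem \ref{pathsubgraph}, while the new colon reduces to the previous subcase after a shift of the matching index.

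Combining the three bounds through Lemma \ref{exact}(1) produces $\depth(S/A)\ge\lceil(n-t)/3\rceil$, completing the induction. The main obstacle is precisely the last subcase: $H_{\bf w}^{'}$ need not be a subgraph of the matching $H_{\bf w}$ attached to $t-1$, so the additional cut at $x_{2t-2}$ must be arranged carefully so that the resulting path cofactors genuinely satisfy the hypotheses of Theorem \ref{pathsubgraph}, with the non-trivial edge at a position $i\ge 2$ and the residual matching $H_{\bf w}^{'}\setminus\{x_{2t-3}x_{2t-2}\}$ confined to the correct vertex range, in order to extract the sharp $\lceil(n-t)/3\rceil$ bound rather than a weaker one.
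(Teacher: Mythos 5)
Your skeleton (induction on $t$, cutting the cycle open near $x_{n-1},x_n$, using $(I(C_{\bf w}^n)^t:x_{n-1}x_n)=I(C_{\bf w}^n)^{t-1}$ and Theorem \ref{pathsubgraph}) is reasonable, and the term $(A,x_n)=I(P_{\bf w}^{n-1})^t+I(H_{\bf w}')+(x_n)$ is indeed of the right shape; but two of your three depth estimates are not actually established, and they are exactly where the work lies. First, the cokernel $((A:x_n),x_{n-1})$ is \emph{not} a path power plus a subgraph ideal: by the commutation lemma it equals $(I(C_{\bf w}^n\backslash x_{n-1})^t:x_n)+I(H_{\bf w}')+(x_{n-1})$, and the colon of a path power by the leaf $x_n$ produces mixed terms such as $x_1I(C_{\bf w}^n\backslash x_{n-1})^{t-1}$, so Theorem \ref{pathsubgraph}(1)--(2) does not apply "analogously"; resolving such vertex-colons is precisely what forces the extra chains of exact sequences in the paper's proofs. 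Second, and more seriously, your treatment of $A:x_nx_{n-1}=I(C_{\bf w}^n)^{t-1}+I(H_{\bf w}')$ when $x_{2t-3}x_{2t-2}\in E(H_{\bf w}')$ does not close: the inductive hypothesis for $t-1$ only allows matching edges inside $\{x_1,\ldots,x_{2t-4}\}$, and your proposed exact sequence in the single variable $x_{2t-2}$ has colon term $(I(C_{\bf w}^n)^{t-1}:x_{2t-2})+(x_{2t-3})+I(H_{\bf w}'\backslash x_{2t-2})$, which again contains $x_{2t-1}I(C_{\bf w}^n)^{t-2}$ and is covered neither by the induction nor by Theorem \ref{pathsubgraph}; if instead you peel the edge off as a quotient, Lemma \ref{exact}(3) costs you $1$ in depth and the bound degrades below $\lceil\frac{n-t}{3}\rceil$. (There is also a boundary issue: applying Theorem \ref{pathsubgraph} with exponent $t$ to the $(n-1)$-vertex path requires $t<\lceil\frac{n}{2}\rceil$, which fails for $n$ even and $t=\frac{n}{2}$, a value allowed in the theorem you are proving.)

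For comparison, the paper avoids both problems by never coloning by a lone vertex at this stage: after separating the special case $E(H_{\bf w}')=\{x_1x_2\}$, it colons by whole edges $x_jx_{j+1}$ starting immediately after the last matching edge of $H_{\bf w}'$ and continuing around the cycle, so every colon term has the clean form $I(C_{\bf w}^n\backslash x_j)^{t-1}+I(H_{\bf w}'\backslash x_j)+(x_j)$, to which Theorem \ref{pathsubgraph} applies with exponent $t-1$; the accumulated quotient $K_n$ (the power of the cycle plus almost all cycle edges) is then handled by a second chain of edge-colons ($\frakP_i$) terminating in an ideal of the form $I(C_{\bf w}^n\backslash e)+(\text{one high power})$, which is computed directly. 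Reworking your argument along those lines — replacing the vertex cuts at $x_{n-1},x_n$ and at $x_{2t-2}$ by edge cuts that keep every colon within the scope of Theorem \ref{pathsubgraph} or of the induction — is needed before the proposal can be considered a proof.
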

 \begin{proof}
	 We prove the statement by induction on $t$. Let $J=I(C_{\bf w}^n)^t+I(H_{\bf w}^{'})$. If $t=1$,
 then $J=I(C_{\bf w}^n)$.  By Theorem \ref{cycle},  $\depth(S/J)=\lceil \frac{n-1}{3}\rceil$. Therefore, in the following,  we can assume that $t\geq 2$. There are two cases:
	 	
 {\it Case $1$}:  If $E(H_{\bf w}^{'})=\{x_1x_2\}$, then $J=I(C_{\bf w}^n\backslash e_1)^t+(u_1)$. In this case, let $J_1=(J:x_2x_3)$ and $J_2=(J,x_2x_3)$.
	 First we  compute $\depth(S/J_1)$. Note that
	 	$(J_1,x_1)=I(C_{\bf w}^n\backslash x_1)^{t-1}+(x_1)$ by Lemma \ref{path1}, so $\depth(S/(J_1,x_1))=\lceil \frac{(n-1)-(t-1)+1}{3}\rceil=\lceil \frac{n-t+1}{3}\rceil$ by Lemmas \ref{sum1} and \ref{path}. Again using  Lemma \ref{path1}, we  can get
\[
(J_1:x_1)=\begin{cases}
	 		I(C_{\bf w}^n\backslash\{x_n,x_1\})+((x_1x_2)^{{\bf w}_1-1},x_n), &\text{if $t=2$,} \\ 	
	 		(I(C_{\bf w}^n\backslash e_1)^{t-1}:x_1)+((x_1x_2)^{{\bf w}_1-1}), &\text{if $t\geq 3$.}
	 	\end{cases}
\]
Consequently, if $t=2$, then $\depth(S/(J_1:x_1))\geq \lceil \frac{n-2}{3}\rceil$ by Lemmas \ref{sum1}, \ref{path} and \ref{nontrivialpath1}. Applying Lemma \ref{exact} to  the  short exact sequence
\begin{gather}\label{eqn: equality13}
	 		\begin{matrix}
	 			0 & \longrightarrow & \frac{S}{J_1:x_1}(-1)  & \stackrel{ \cdot x_1} \longrightarrow  & \frac{S}{J_1} & \longrightarrow & \frac{S}{(J_1,x_1)} & \longrightarrow & 0,
	 		\end{matrix}
	 	\end{gather}
we obtain  	$\depth (S/J_1)\geq \lceil \frac{n-2}{3}\rceil$.

If $t\geq 3$, then $(J_1:x_1x_n)=I(C_{\bf w}^n\backslash e_1)^{t-2}+((x_1x_2)^{{\bf w}_1-1})$, $((J_1:x_1),x_n)=I(C_{\bf w}^n\backslash \{x_1,x_n\})^{t-1}+((x_1x_2)^{{\bf w}_1-1},x_n)$ by Lemma \ref{path1}. By the induction and Lemmas \ref{sum1}, \ref{path3},  \ref{cycle3} and  Theorem \ref{pathsubgraph}, we get that $\depth(S/(J_1:x_1x_n))\geq \lceil \frac{n-(t-2)}{3}\rceil=\lceil \frac{n-t+2}{3}\rceil$ and
 $\depth(S/((J_1:x_1),x_n))\geq \lceil \frac{(n-1)-(t-1)+1}{3}\rceil=\lceil \frac{n-t+1}{3}\rceil$.
 It follows from the exact sequence (\ref{eqn: equality13}) and
\begin{gather*}
	 		\begin{matrix}
	 			0 & \longrightarrow & \frac{S}{J_1:x_1x_n}(-1)  & \stackrel{ \cdot x_n} \longrightarrow  & \frac{S}{J_1:x_1} & \longrightarrow & \frac{S}{((J_1:x_1),x_n)} & \longrightarrow & 0
	 		\end{matrix}
	 	\end{gather*}
that  $\depth (S/(J_1:x_1))\geq \lceil \frac{n-t}{3}\rceil$ and $\depth (S/J_1)\geq \lceil \frac{n-t}{3}\rceil$.

	 	Next, we compute $\depth(S/J_2)$. Note that  $(J_2:x_3x_4)=I(C_{\bf w}^n\backslash x_2)^{t-1}+(x_2)$,  $(J_2,x_3)=I(C_{\bf w}^n\backslash \{x_2,x_3\})^t+(u_1,x_3)$ and $((J_2:x_3),x_4)=I(C_{\bf w}^n\backslash \{x_2,x_4\})^t+(x_2,x_4)$ by Lemma \ref{path1}, we have that
 $\depth(S/(J_2,x_3))\geq \lceil \frac{n-t+1}{3}\rceil$, $\depth(S/(J_2:x_3x_4))= \lceil \frac{(n-1)-(t-1)+1}{3}\rceil=\lceil \frac{n-t+1}{3}\rceil$ and $\depth(S/((J_2:x_3),x_4))=1+\lceil \frac{(n-3)-t+1}{3}\rceil=\lceil \frac{n-t+1}{3}\rceil$ by  Lemmas \ref{sum1}, \ref{path} and Theorem \ref{pathsubgraph}. Using the   exact sequences
	 	\begin{gather*}
	 		\begin{matrix}
	 			0 & \longrightarrow & \frac{S}{J_2:x_3}(-1)  & \stackrel{ \cdot x_3} \longrightarrow  & \frac{S}{J_2} & \longrightarrow & \frac{S}{(J_2,x_3)} & \longrightarrow & 0,\\
	 			0 & \longrightarrow & \frac{S}{J_2:x_3x_4}(-1)  & \stackrel{ \cdot x_3} \longrightarrow  & \frac{S}{J_2:x_3} & \longrightarrow & \frac{S}{((J_2:x_3),x_4)} & \longrightarrow & 0,
	 		\end{matrix}
	 	\end{gather*}
	 	we can get $\depth(S/J_2)\geq \lceil \frac{n-t}{3}\rceil$.
	 	
	 	As a result,  using the short exact sequence
	 	\begin{gather*}
	 		\begin{matrix}
	 			0 & \longrightarrow & \frac{S}{J_1}(-2)  & \stackrel{ \cdot x_2x_3} \longrightarrow  & \frac{S}{J} & \longrightarrow & \frac{S}{J_2} & \longrightarrow & 0,
	 		\end{matrix}
	 	\end{gather*}
	 	we can obtain  $\depth(S/J)\geq \lceil \frac{n-t}{3}\rceil$.
	 	
	 {\it Case $2$}:   If $E(H_{\bf w}^{'})\neq \{x_1x_2\}$, then we let $m=\max\{ i\in [t-2]\mid x_{2i+1}x_{2i+2}\in E(H_{\bf w}^{\prime})\}$ and  $K_j=J+\sum\limits_{s=2m+2}^{j}(x_sx_{s+1})$ for $2m+2\leq j\leq n$.
	 	Thus $(J:x_{2m+2}x_{2m+3})=I(C_{\bf w}^n\backslash x_{2m+1})^{t-1}+I(H_{\bf w}^{\prime}\backslash x_{2m+1})+(x_{2m+1})$ and $(K_{j}:x_{j+1}x_{j+2})=I(C_{\bf w}^n\backslash x_j)^{t-1}+I(H_{\bf w}^{\prime}\backslash x_j)+(x_j)$ for  $2m+2\leq j\leq n-1$. By Theorem \ref{pathsubgraph}, we get that $\depth(S/(J:x_{2m+2}x_{2m+3}))\geq \lceil \frac{(n-1)-(t-1)}{3}\rceil=\lceil \frac{n-t}{3}\rceil$, $\depth(S/(K_{j}:x_{j+1}x_{j+2}))\geq \lceil \frac{(n-1)-(t-1)}{3}\rceil=\lceil \frac{n-t}{3}\rceil$ for  $2m+2\leq j\leq n-1$. It follows from the exact sequences
	 	\begin{gather*}
	 		\begin{matrix}
	 			0 & \longrightarrow & \frac{S}{J:x_{2m+2}x_{2m+3}}(-2)  & \xrightarrow[]{ \cdot x_{2m+2}x_{2m+3}}  & \frac{S}{J} & \longrightarrow & \frac{S}{K_{2m+2}} & \longrightarrow & 0,\\
	 			0 & \longrightarrow & \frac{S}{K_{2m+2}:x_{2m+3}x_{2m+4}}(-2)  & \xrightarrow[]{\cdot x_{2m+3}x_{2m+4}} & \frac{S}{K_{2m+2}} & \longrightarrow & \frac{S}{K_{2m+3}} & \longrightarrow & 0,\\
	 			0 & \longrightarrow & \frac{S}{K_{2m+3}:x_{2m+4}x_{2m+5}}(-2)  & \xrightarrow[]{ \cdot x_{2m+4}x_{2m+5}} & \frac{S}{K_{2m+3}} & \longrightarrow & \frac{S}{K_{2m+4}} & \longrightarrow & 0,\\
	 			&  &\vdots&  &\vdots&  &\vdots&  &\\
	 			0 & \longrightarrow & \frac{S}{K_{n-1}:x_nx_1}(-2)  & \xrightarrow[]{ \cdot x_{n}x_{1}} & \frac{S}{K_{n-1}} & \longrightarrow & \frac{S}{K_{n}} & \longrightarrow & 0,\\
	 		\end{matrix}
	 	\end{gather*}
	 that $\depth(S/J)\geq \lceil \frac{n-t}{3}\rceil$ if $\depth(S/K_n)\geq \lceil \frac{n-t}{3}\rceil$. It remains to prove that
$\depth(S/K_n)\geq \lceil \frac{n-t}{3}\rceil$.

Let $H_{\bf w}^1$ be a subgraph of $C_{\bf w}^n$ with $V(H_{\bf w}^{1})=V(H_{\bf w}^{'})\cup\{x_{2m+3},\ldots,x_n,x_1\}$, $E(H_{\bf w}^{1})=E(H_{\bf w}^{'})\cup(\bigcup\limits_{i=2m+2}^{n}\{x_ix_{i+1}\})$, $p_1=\max\{ j\in [n]\mid x_jx_{j+1}\notin E(H_{\bf w}^{1})\}$ and $\frakP_1=I(C_{\bf w}^n)^t+I(H_{\bf w}^{1})$.
For $i\ge 2$, let $H_{\bf w}^i$ be a subgraph of $C_{\bf w}^n$ with $V(H_{\bf w}^{i})=V(H_{\bf w}^{i-1})\cup\{x_{p_{i-1}},x_{p_{i-1}+1}\}$, $E(H_{\bf w}^{i})=E(H_{\bf w}^{i-1})\cup \{x_{p_{i-1}}x_{p_{i-1}+1}\}$, $p_i=\max\{j\in [n]\mid x_jx_{j+1}\notin E(H_{\bf w}^{i})\}$ and $\frakP_i=I(C_{\bf w}^n)^t+I(H_{\bf w}^{i})$. Since $x_2x_3\notin E(H_{\bf w}^{'})$, $p_i\ge 2$. By the definition of $p_i$ in above iterative process of $H_{\bf w}^{i}$, we know that $p_\ell=2$ for some  $\ell\geq 2$.

By Lemma \ref{path1},  we obtain that   for any $i\in [\ell-1]$,
\[
(\frakP_i:x_ix_{i+1})=\begin{cases}
	I(C_{\bf w}^n\backslash x_{i+2})^{t-1}+I(G_{\bf w}\backslash x_{i+2})+(x_{i+2}), &\text{if $x_{i-1}x_{i}\notin E(H_{\bf w}^{i})$,} \\ 	
	E, &\text{if $x_{i-1}x_{i}\in E(H_{\bf w}^{i})$,}
\end{cases}
\]
where $E=I(C_{\bf w}^n\backslash \{x_{i-1},x_{i+2}\})^{t-1}+I(G_{\bf w}\backslash \{x_{i-1},x_{i+2}\})+(x_{i-1},x_{i+2})$.

Similarly  calculation of  $\depth(S/(((A:x_{n-1}),x_n):x_{n-2}))$  as in   {\it Case $1$} of the proof of Theorem \ref{pathsubgraph}(1), we deduce
$\depth(S/(\frakP_i:x_ix_{i+1}))\geq \lceil \frac{n-t}{3}\rceil$ for any $i\in [\ell-1]$.
To prove that  $\depth(S/K_n)\geq \lceil \frac{n-t}{3}\rceil$, by the following sequences
\begin{gather}\label{eqn: equality14}
	\begin{matrix}
		0 & \longrightarrow & \frac{S}{\frakP_1:x_{p_1}x_{p_1+1}}(-2)  & \xrightarrow[]{ \cdot x_{p_1}x_{p_1+1}}  & \frac{S}{K_n} & \longrightarrow & \frac{S}{\frakP_2} & \longrightarrow & 0,\\
		0 & \longrightarrow & \frac{S}{\frakP_2:x_{p_2}x_{p_2+1}}(-2)  & \xrightarrow[]{ \cdot x_{p_2}x_{p_2+1}}   & \frac{S}{\frakP_2} & \longrightarrow & \frac{S}{\frakP_3} & \longrightarrow & 0,\\
		&  &\vdots&  &\vdots&  &\vdots&  &\\
		0 & \longrightarrow & \frac{S}{\frakP_{\ell-1}:x_{p_{\ell-1}}x_{p_{\ell-1}+1}}(-2)  & \xrightarrow[]{ \cdot x_{p_{\ell-1}}x_{p_{\ell-1}+1}}   & \frac{S}{\frakP_{\ell-1}} & \longrightarrow & \frac{S}{\frakP_{\ell}} & \longrightarrow & 0,
	\end{matrix}
\end{gather}
we need to compute  $\depth(S/\frakP_\ell)$. We consider the following two cases:

(a) If ${x_1x_2}\notin E(H_{\bf w}^{'})$, then $(\frakP_\ell:x_2x_3)=I(C_{\bf w}^n\backslash x_4)^{t-1}+I(H^{\ell}_{\bf w}\backslash x_4)+(x_4)$ and $(\frakP_\ell,x_2x_3)=I(C_{\bf w}^n\backslash e_1)+((x_1x_2)^{t{\bf w}_1})$ by Lemma \ref{path1}. Thus $\depth(S/(\frakP_\ell:x_2x_3))\geq \lceil \frac{(n-1)-(t-1)}{3}\rceil=\lceil \frac{n-t}{3}\rceil$, $\depth(S/(\frakP_\ell,x_2x_3))=\lceil \frac{n-1}{3}\rceil$  by Lemma \ref{cycle} and Theorem \ref{pathsubgraph}.  We can get $\depth(S/\frakP_\ell)\geq \lceil\frac{n-t}{3}\rceil$ by   the exact sequence
\[
0  \longrightarrow \frac{S}{\frakP_\ell:x_2x_3}(-2)   \stackrel{ \cdot x_2x_3} \longrightarrow   \frac{S}{\frakP_\ell} \longrightarrow  \frac{S}{(\frakP_\ell,x_2x_3)}  \longrightarrow  0.
\]

(b) If ${x_1x_2}\in E(H_{\bf w}^{'})$, then $\frakP_\ell=I(C_{\bf w}^n\backslash e_2)+((x_2x_3)^t)$. It follows that  $(\frakP_\ell,x_2x_3)=I(C_{\bf w}^n)$, $(\frakP_\ell:x_2x_3x_1)=I(C_{\bf w}^n\backslash \{x_2,x_4,x_n\})+((x_1x_2)^{{\bf w}_1-1},(x_2x_3)^{t-1})+(x_4,x_n)$, $((\frakP_\ell:x_2x_3),x_1)=I(C_{\bf w}^n\backslash \{x_1,x_2,x_4\})+((x_2x_3)^{t-1})+(x_1,x_4)$ by Lemma \ref{path1}. Thus  $\depth(S/(\frakP_\ell,x_2x_3))=\lceil \frac{n-1}{3}\rceil$, $\depth(S/(\frakP_\ell:x_2x_3x_1))=\lceil \frac{n-5}{3}\rceil+1=\lceil \frac{n-2}{3}\rceil$, $\depth(S/((\frakP_\ell:x_2x_3),x_1))=\lceil \frac{n-4}{3}\rceil+1=\lceil \frac{n-1}{3}\rceil$ by Lemmas \ref{sum1}, \ref{path}, \ref{nontrivialpath1} and Theorem \ref{cycle}. By  Lemma \ref{exact} and  the following short exact sequences
\begin{gather*}
	\begin{matrix}
		0 & \longrightarrow & \frac{S}{\frakP_\ell:x_2x_3}(-2)  & \stackrel{ \cdot x_2x_3} \longrightarrow  & \frac{S}{\frakP_\ell} & \longrightarrow & \frac{S}{(\frakP_\ell,x_2x_3)} & \longrightarrow & 0,\\
		0 & \longrightarrow & \frac{S}{\frakP_\ell:x_2x_3x_1}(-1)  & \stackrel{ \cdot x_1} \longrightarrow  & \frac{S}{\frakP_\ell:x_2x_3} & \longrightarrow & \frac{S}{((\frakP_\ell:x_2x_3),x_1)} & \longrightarrow & 0,
	\end{matrix}
\end{gather*}
we can get $\depth(S/\frakP_\ell)\geq \lceil\frac{n-t}{3}\rceil$.
Again using the exact  sequences  (\ref{eqn: equality14}), we obtain	$\depth(S/K_n)\geq \lceil\frac{n-t}{3}\rceil$
   and finish the proof.
\end{proof}

  	 \begin{Theorem}\label{1n-cyclef1}
	 	Let  $C_{\bf w}^n$ be an $n$-cycle with $n\geq 4$, and ${\bf w}_1\geq 2$ and ${\bf w}_i=1$ for any $i\neq 1$. Let $2\leq t < \lceil\frac{n+1}{2}\rceil$ be an integer  and $f=(x_1x_2)^{{\bf w}_1}\prod\limits_{k=3}^{2t-2}x_k$, then
\[
\depth(S/(I(C_{\bf w}^n)^t,f))\geq \lceil\frac{n-t}{3}\rceil.
\]
	 \end{Theorem}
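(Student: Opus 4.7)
The plan is to establish the bound $\depth(S/J) \ge \lceil (n-t)/3 \rceil$, where $J = (I(C_{\bf w}^n)^t, f)$, by iterating the depth lemma (Lemma \ref{exact}) through short exact sequences that reduce the problem to ideals controlled by Theorem \ref{pathsubgraph} and Theorem \ref{cyclesubgraph}. I would proceed by induction on $t$, with the base case $t = 2$ (where $f = u_1$) handled by the same SES machinery, absorbing the weighted generator $u_1$ into $I^2$ layer by layer.

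The first step is to apply the short exact sequence
\[
0 \longrightarrow \frac{S}{J:x_1}(-1) \stackrel{\cdot\,x_1}\longrightarrow \frac{S}{J} \longrightarrow \frac{S}{(J,x_1)} \longrightarrow 0.
\]
Since $x_1 \mid f$, the right-hand term becomes $(J,x_1) = (I(C_{\bf w}^n)^t, x_1) = I(P^{n-1})^t + (x_1)$, where $P^{n-1}$ is the trivially weighted path on $x_2, \ldots, x_n$ obtained by deleting $x_1$ from $C_{\bf w}^n$ (which kills both $u_1$ and $u_n$). By Lemmas \ref{sum1} and \ref{path}, $\depth(S/(J,x_1)) = \max\{\lceil (n-t)/3 \rceil, 1\} \ge \lceil (n-t)/3 \rceil$, giving the easier side of the inequality at once.

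The core of the argument lies in bounding $\depth(S/(J:x_1))$, where $(J:x_1) = (I(C_{\bf w}^n)^t : x_1) + (f/x_1)$. I would iterate short exact sequences by successively colon-ing and quotienting by $x_2, x_3, \ldots, x_{2t-2}$, each of which divides the current residual principal part of $f$. At every step, the quotient side $((J:m), x_j) = ((I(C_{\bf w}^n)^t : m), x_j)$, since the current principal part remains divisible by $x_j$ and thus vanishes modulo $x_j$; the resulting ideal, after killing $x_j$, has the form of a power of a path edge ideal augmented by subgraph edges, and Theorem \ref{pathsubgraph}(1) or (2) (or Theorem \ref{cyclesubgraph} when a cycle fragment remains) supplies the required lower bound $\lceil (n-t)/3 \rceil$. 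The colon sides feed into further iterations, and when the residual principal part of $f$ is fully divided out, the ideal collapses to one handled either by Theorem \ref{cyclesubgraph} directly or by the induction hypothesis on $t-1$.

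The main obstacle is the non-trivial exponent ${\bf w}_1 \ge 2$: the colon $(I(C_{\bf w}^n)^t : x_1^a x_2^b)$ behaves qualitatively differently depending on how $(a,b)$ compare with $({\bf w}_1, {\bf w}_1)$, and absorbing the weighted generator $u_1 = (x_1 x_2)^{{\bf w}_1}$ requires passing through $\sim {\bf w}_1$ successive layers of colons on each of $x_1$ and $x_2$. Keeping the bookkeeping consistent — namely, aligning each intermediate ideal $(I(C_{\bf w}^n)^t : x_1^a x_2^b \cdot x_3 \cdots x_j) + (f/(x_1^a x_2^b \cdot x_3 \cdots x_j))$ with the precise form required by Theorem \ref{pathsubgraph}(2) or Theorem \ref{cyclesubgraph}, and matching the surviving principal part with an edge of the subgraph $H_{\bf w}'$ — is the technical heart of the argument and will likely require auxiliary lemmas describing these iterated colons explicitly.
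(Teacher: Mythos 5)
There is a genuine gap, and it sits exactly where you flag it yourself. Your plan hinges on colons of $I(C_{\bf w}^n)^t$ by single variables that divide the non-trivial edge, i.e.\ ideals of the form $(I(C_{\bf w}^n)^t:x_1^ax_2^b\,x_3\cdots x_j)+(f/(x_1^ax_2^bx_3\cdots x_j))$. None of the tools available in the paper control these: Lemma \ref{path1} and the colon identities used elsewhere require the edge being coloned to be \emph{trivial} (so $(I^t:x_1x_2)\neq I^{t-1}$ here, since ${\bf w}_1\geq 2$), and Theorems \ref{pathsubgraph} and \ref{cyclesubgraph} only bound the depth of a \emph{power plus the edge ideal of a subgraph}, whereas already $((I^t:x_1),x_3)$ contains mixed-degree generators such as $x_1^{{\bf w}_1-1}x_2^{{\bf w}_1}\cdot(\text{product of }t-1\text{ edges})$ that are not of that shape. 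So the claim that each quotient side ``has the form of a power of a path edge ideal augmented by subgraph edges'' is unsupported, and deferring the ${\bf w}_1$-layer bookkeeping to unspecified ``auxiliary lemmas'' leaves the hardest part of the statement unproved. (A further symptom: for $t=2$ one has $f=(x_1x_2)^{{\bf w}_1}$, so $(I^2,f)$ is \emph{directly} an instance of Theorem \ref{cyclesubgraph} with $E(H_{\bf w}')=\{x_1x_2\}$; no layer-by-layer absorption of $u_1$ is needed or intended.)

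The paper's proof avoids coloning by $x_1,x_2$ altogether. Writing $f_j=(x_1x_2)^{{\bf w}_1}\prod_{k=3}^{2t-2j}x_k$, it uses the monomial-ideal identity
$(A,f_j)=(A,f_{j+1})\cap(A,x_{2t-2j-1}x_{2t-2j})$ with $A=I(C_{\bf w}^n)^t+I(H_{\bf w})$, and the associated Mayer--Vietoris-type sequence
$0\to S/(A,f_j)\to S/(A,f_{j+1})\oplus S/(A,x_{2t-2j-1}x_{2t-2j})\to S/(A,f_{j+1},x_{2t-2j-1}x_{2t-2j})\to 0$,
peeling off one \emph{trivial} edge $x_{2t-2j-1}x_{2t-2j}$ of $f$ at a time. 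Every term that appears is of the form $I(C_{\bf w}^n)^t+I(H_{\bf w}')$ (possibly plus a shorter $f_{j+1}$, handled by the same iteration, terminating at $f_{t-1}=u_1$), with $H_{\bf w}'$ a subgraph of the matching $\{x_1x_2,x_3x_4,\ldots,x_{2t-3}x_{2t-2}\}$; these are precisely the ideals covered by Theorem \ref{cyclesubgraph}, and Lemma \ref{exact} then yields the bound. If you want to salvage your route, you would have to prove explicit structure results for $(I(C_{\bf w}^n)^t:x_1^ax_2^b)$ with $0<a,b<{\bf w}_1$ and show their depths still meet $\lceil\frac{n-t}{3}\rceil$; switching to the intersection decomposition above removes that burden entirely.
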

\begin{proof}
	 	Let $f_j=(x_1x_2)^{{\bf w}_1}\prod\limits_{k=3}^{2t-2j}x_k$ for each $j\in [t-2]$, and let $H_{\bf w}$ be any  subgraph of $C_{\bf w}^n$ whose edges are taken from the set
 $\{e_{2t-2j+1},e_{2t-2j+3},\ldots,e_{2t-1}\}$. then $f_1=f$,  $f_j=x_{2t-2j-1}x_{2t-2j}f_{j+1}$ and
$I(C_{\bf w}^n)^t+I(H_{\bf w})+(f_j)=(I(C_{\bf w}^n)^t+I(H_{\bf w})+(f_{j+1}))\cap (I(C_{\bf w}^n)^t+I(H_{\bf w})+(x_{2t-2j-1}x_{2t-2j}))$.
The desired results follow from exact sequence
\[
0  \longrightarrow  \frac{S}{(A,f_j)}  \longrightarrow   \frac{S}{(A,f_{j+1})}\oplus\frac{S}{(A,x_{2t-1-2j}x_{2t-2j})} \longrightarrow \frac{S}{(A,f_{j+1},x_{2t-1-2j}x_{2t-2j})}  \longrightarrow  0,
\]
where $A=I(C_{\bf w}^n)^t+I(H_{\bf w})$ for  any $1\leq j\leq t-2$.
 \end{proof}	

   \begin{Theorem}\label{1n-cyclef2}
     	Let  $t$, $f$ and $C_{\bf w}^n$ be  as in Theorem \ref{1n-cyclef1}.  Then
     	\[
     (I(C_{\bf w}^n)^t:f)=I(C_{\bf w}^n)+(x_n^2)+(\bigcup\limits_{i=0}^{t-2}\{x_3x_{2i+3}\})+(\bigcup\limits_{j=1}^{t-1}\{x_{2j+1}x_{n}\})+(\bigcup\limits_{p=1}^{t-2}\bigcup\limits_{q=0}^{t-p-2}\{x_{2p}x_{2p+2q+3}\}).
     \]
     \end{Theorem}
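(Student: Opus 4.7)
The plan is to prove the two containments separately. The key structural observation is that
\[
f = u_1 \cdot u_3 \cdot u_5 \cdots u_{2t-3}
\]
(with the sub-product interpreted as $1$ when $t=2$), which exhibits $f$ as a product of $t-1$ edge generators. Consequently $f \in I(C_{\bf w}^n)^{t-1}$, so the containment $I(C_{\bf w}^n) \subseteq (I(C_{\bf w}^n)^t : f)$ is automatic, and the real content of the theorem lies in analyzing the four additional families of monomials on the right-hand side.

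For the inclusion $\supseteq$, I would verify each remaining generator $g$ by exhibiting an explicit factorization of $g \cdot f$ into $t$ edge generators. The building blocks are the reshuffling identity
\[
x_{2p} x_{2p+2q+3} \cdot \prod_{\ell=p}^{p+q} u_{2\ell+1} \;=\; \prod_{\ell=p}^{p+q+1} u_{2\ell},
\]
its boundary variant $x_3 x_{2i+3} \cdot \prod_{\ell=1}^{i} u_{2\ell+1} = x_3^2 \cdot \prod_{\ell=2}^{i+1} u_{2\ell}$, and the weight-absorbing rewrites
\[
x_n^2 \cdot u_1 = u_n^2 \cdot x_1^{{\bf w}_1-2} x_2^{{\bf w}_1}, \qquad x_3^2 \cdot u_1 = u_2^2 \cdot x_1^{{\bf w}_1} x_2^{{\bf w}_1-2},
\]
both of which require the hypothesis ${\bf w}_1 \ge 2$. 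For the family $x_{2j+1} x_n$ one combines these in a chained form, pairing $x_n$ with $u_1$ via $u_n$ and then reshuffling the odd-index chain $u_3, u_5, \ldots, u_{2j-1}$ into the staircase $u_2, u_4, \ldots, u_{2j}$. A direct count shows that after every such shuffle the edge count is exactly $t$, placing $g \cdot f$ in $I(C_{\bf w}^n)^t$.

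For the reverse inclusion $\subseteq$, take a monomial $u \in \mathcal{G}((I(C_{\bf w}^n)^t : f))$ and write
\[
u \cdot f \;=\; h \cdot \prod_{i=1}^n u_i^{k_i}, \qquad \sum_{i=1}^n k_i = t, \quad k_i \ge 0,
\]
for some monomial $h$. The proof then proceeds by case analysis on the support of $(k_1, \ldots, k_n)$, comparing on each variable $x_j$ the exponent contributed by $\prod u_i^{k_i}$ with that of $u f$. Because $f$ carries only a single power of each of $x_3, \ldots, x_{2t-2}$ and exactly ${\bf w}_1$ copies of $x_1$ and $x_2$, the exponent budget forces the distribution of the $k_i$'s into a short list of patterns; in each pattern a specific generator from one of the four listed families must divide $u$.

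The main obstacle lies in organizing this case analysis cleanly. I would stratify it first by the value of $k_1$ (if $k_1 \ge 2$, then $u_1 \mid uf$ and one obtains $u \in I(C_{\bf w}^n)$ immediately), next by whether $k_n \ge 1$ (which tends to extract factors such as $x_n^2$ or $x_{2j+1} x_n$), and finally by the placement of the remaining nonzero $k_i$ inside the window $\{2, 3, \ldots, n-1\}$. The reasoning mirrors the degree-counting argument already used in the proof of Theorem \ref{2n-cycle}, but the nontrivially weighted edge $u_1$ introduces ${\bf w}_1$ units of exponent slack in $x_1$ and $x_2$ that can be redistributed among $u_n$ and $u_2$ in more than one way, and this redistribution must be carefully tracked to pin down the exact generator of the right-hand side that contains $u$.
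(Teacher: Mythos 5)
Your containment ``$\supseteq$'' is complete and is essentially the paper's own computation: the factorization $f=u_1\prod_{\ell=1}^{t-2}u_{2\ell+1}$ (so $f\in I(C_{\bf w}^n)^{t-1}$), the reshuffling identity turning the odd chain $u_{2p+1}\cdots u_{2p+2q+1}$ into the even chain $u_{2p}\cdots u_{2p+2q+2}$, the weight-absorbing rewrites $x_n^2u_1=u_n^2x_1^{{\bf w}_1-2}x_2^{{\bf w}_1}$ and $x_3^2u_1=u_2^2x_1^{{\bf w}_1}x_2^{{\bf w}_1-2}$, and the chained version for $x_{2j+1}x_n$ are exactly the identities the paper uses, and your edge counts come out to $t$ as required.

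The reverse containment is where the theorem actually lives, and your proposal stops at the point where the work begins. Writing $uf=h\prod_{i=1}^{n}u_i^{k_i}$ with $\sum_i k_i=t$ and stratifying by $k_1$, by $k_n$, and by the placement of the remaining $k_i$ is the correct setup (it is also the paper's), but the sentence ``the exponent budget forces the distribution of the $k_i$'s into a short list of patterns; in each pattern a specific generator from one of the four listed families must divide $u$'' is precisely the assertion to be proved, and you give no argument for it. The missing idea is the counting (pigeonhole) step that drives the paper's proof: after the easy reductions ($k_i\geq 1$ for some $2t-1\le i\le n-1$ gives $u\in I(C_{\bf w}^n)$; $k_1\geq 2$, $k_n\geq 2$, or $k_i\geq 2$ with $3\le i\le 2t-2$ give $u\in I(C_{\bf w}^n)+(x_n^2)$; $k_2\geq 3$ gives $u\in(x_3^2)$), one is left with $\sum_i k_i=t$ supported on $\{1,2,\ldots,2t-2,n\}$ with $k_i\le 1$ there (and $k_2\le 2$), and one must show this forces an adjacent pair $k_{2i-1}=k_{2i}=1$ --- and, in the subcases with $k_2\le 1$, a second adjacent pair $k_{2b}=k_{2b+1}=1$ --- since otherwise $\sum_i k_i\le t-1$, a contradiction. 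Only after locating these ``corners'' (taking $a=\min\{i:\ k_{2i-1}=k_{2i}=1\}$) do the degree comparisons at $x_{2a}$ and $x_{2b+1}$, variables that occur in $f$ only to the first power, force $u$ into the specific double-indexed family $\{x_{2p}x_{2p+2q+3}\}$, while the boundary patterns $k_2=2$, $k_n=1$, $k_{2t-2}=1$ account for the families $x_3x_{2i+3}$, $x_{2j+1}x_n$ and the edge generators. Without carrying out this count and the attendant case analysis you cannot conclude that every minimal generator of $(I(C_{\bf w}^n)^t:f)$ lies in one of the five listed families, so as written the equality is not established; completing it would reproduce the paper's argument rather than give a different route.
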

 \begin{proof}
     	Let $I=I(C_{\bf w}^n)$.  First we prove that the inclusion relation ``$\supseteq$" holds. It is easy to see that $(I^t:f)\supseteq I$ and $(x_n^2)\in(I^t:f)$, since $f=(x_1x_2)^{{\bf w}_1}\prod\limits_{k=3}^{2t-2}x_k=u_1\prod\limits_{\ell=1}^{t-2}u_{2\ell+1}\in I^{t-1}$,
  and $x_nf=x_1^{{\bf w}_1-2}x_2^{{\bf w}_1}u_n^2(\prod\limits_{\ell=1}^{t-2}u_{2\ell+1})
     \in I^t$.
     Now we divide into three cases:
 (i) If $p\in [t-2]$ and $q\in [t-p-2]\cup \{0\}$, then
$ x_{2p}x_{2p+2q+3}f= u_1(\prod\limits_{\ell=1}^{p-1}u_{2\ell+1})(\prod\limits_{\ell=p}^{p+q+1}u_{2\ell})(\prod\limits_{\ell=p+q+1}^{t-2}u_{2\ell+1})\in I^t$;
(ii) If $i\in [t-2]\cup \{0\}$, then $x_3x_{2i+3}f=x_1^{{\bf w}_1}x_2^{{\bf w}_1-2}u_2^2(\prod\limits_{\ell=2}^{i+1}u_{2\ell})(\prod\limits_{\ell=i+1}^{t-2}u_{2\ell+1})\in I^t$;
(iii) If $j\in [t-1]$, then $x_{2j+1}x_nf=(x_1x_2)^{{\bf w}_1-1}u_2\cdot u_n(\prod\limits_{\ell=2}^{j}u_{2\ell})(\prod\limits_{\ell=j}^{t-2}u_{2\ell+1})\in I^t$.
Therefore, the  inclusion relation ``$\supseteq$" holds.

 Next, we prove  the inclusion relation ``$\subseteq$".
Let $u \in \mathcal{G}(I^n: f)$ be a monomial, then $uf=h\prod\limits_{i=1}^{n}u_i^{k_i}$ for some monomial $h$, where $\sum\limits_{i=1}^{n}k_i=t$ and each $k_i\geq 0$.
Thus
     	\begin{align}\label{eqn: equality15}
     		u(x_1x_2)^{{\bf w}_1}\prod\limits_{k=3}^{2t-2}x_k=h\prod\limits_{i=1}^{n}u_i^{k_i}.
     	\end{align}
   Therefore, if $k_i\geq 1$ for some $2t-1\leq i\leq n-1$, then $u\in(x_ix_{i+1})\subseteq I(C_{\bf w}^n)$, and if $k_2\geq 3$, then $u\in (x_3^2)$   by comparing
the degree of $x_3$ in Eq. (\ref{eqn: equality15}).  If $k_i\geq 2$ for some $i\in \{3,4,\ldots,2t-2\}\cup \{1,n\}$, then $u\in (x_n^2)+I(C_{\bf w}^n)$. It remain to consider the cases:
 $k_i=0$ for all $2t-1\leq i\leq n-1$ and  $k_2\leq 2$ and  $k_i\leq 1$ for all $i\in \{3,4,\ldots,2t-2\}\cup \{1,n\}$.  In this case, Eq. (\ref{eqn: equality15}) becomes
     	\begin{align}\label{eqn: equality16}
     		u(x_1x_2)^{{\bf w}_1}\prod\limits_{k=3}^{2t-2}x_k=hu_n^{k_n}\prod\limits_{i=1}^{2t-2}u_i^{k_i},
     	\end{align}
     	where  $k_n+\sum\limits_{i=1}^{2t-2}k_i=t$. We distinguish into two subcases:
     	
     	(a) When $k_2=2$,  Eq. (\ref{eqn: equality16}) becomes
     $u(x_1x_2)^{{\bf w}_1}\prod\limits_{k=3}^{2t-2}x_k=hu_2^2u_1^{k_1}u_n^{k_n}\prod\limits_{i=3}^{2t-2}u_i^{k_i}$. Furthermore,
     if $k_1=1$, then  $u\prod\limits_{k=3}^{2t-2}x_k=hu_2^2u_n^{k_n}\prod\limits_{i=3}^{2t-2}u_i^{k_i}$, which forces $u\in (x_2x_3)$. It is easy to see that $u\in (x_3x_{2t-1})$ if $k_{2t-2}=1$, and
     $u\in (x_3x_n)$ if $k_n=1$.
   If $k_1=k_{2t-2}=k_n=0$, then Eq. (\ref{eqn: equality16}) becomes
     $u(x_1x_2)^{{\bf w}_1}\prod\limits_{k=3}^{2t-2}x_k=h\prod\limits_{i=2}^{2t-3}u_i^{k_i}$.  In this case,   $k_3=1$ or $k_{2i}=k_{2i+1}=1$ for some $i\in [t-2]\backslash\{1\}$. In fact, if
      $k_3=0$, and
      $k_{2i}=0$ or $k_{2i+1}=0$ for any $i\in [t-2]\backslash\{1\}$, then  $\sum\limits_{i=2}^{2t-3}k_i\le (2t-3)-(t-2)= t-1$, a contradiction.  By comparing the degrees of $x_3$ and $x_{2i+1}$ in the above expression, we get that $u=\frac{h\prod\limits_{i=2}^{2t-3}u_i^{k_i}}{(x_1x_2)^{{\bf w}_1}\prod\limits_{k=3}^{2t-2}x_k}\in (\bigcup\limits_{i=0}^{t-3}\{x_3x_{2i+3}\})$.

     	(b) If $0\leq k_2\leq 1$, then there are three subcases:
     	
     	(i) When $k_n=1$,   Eq. (\ref{eqn: equality16}) becomes $u(x_1x_2)^{{\bf w}_1}\prod\limits_{k=3}^{2t-2}x_k=hu_n\prod\limits_{i=1}^{2t-2}u_i^{k_i}$. Furthermore,  if $k_1=1$ then $u\in(x_nx_1)\subseteq I(C_{\bf w}^n)$, and if $k_{2t-2}=1$ then $u\in (x_{2t-1}x_n)\subseteq (\bigcup\limits_{j=1}^{t-1}\{x_{2j+1}x_{n}\})$.
   If $k_1=k_{2t-2}=0$, then Eq. (\ref{eqn: equality16}) becomes
     	$u(x_1x_2)^{{\bf w}_1}\prod\limits_{k=3}^{2t-2}x_k=hu_n\prod\limits_{i=2}^{2t-3}u_i^{k_i}$.
     	In this case,  $k_{2i}=k_{2i+1}=1$ for some $i\in [t-2]$. In fact, if $k_{2i}=0$ or $k_{2i+1}=0$ for any $i\in [t-2]$, then  $\sum\limits_{i=2}^{2t-3}k_i\leq (2t-4)-(t-2)=t-2$, a contradiction. By comparing the degrees of $x_n$ and $x_{2i+1}$ in the above expression, we get that $u=\frac{hu_n\prod\limits_{i=2}^{2t-3}u_i^{k_i}}{(x_1x_2)^{{\bf w}_1}\prod\limits_{k=3}^{2t-2}x_k}\in (\bigcup\limits_{i=1}^{t-2}\{x_{2i+1}x_{n}\})$.

     	(ii) When $k_n=0$ and $k_{2t-2}=1$,   Eq. (\ref{eqn: equality16}) becomes $u(x_1x_2)^{{\bf w}_1}\prod\limits_{k=3}^{2t-2}x_k=hu_{2t-2}\prod\limits_{i=1}^{2t-3}u_i^{k_i}$. By comparing the
degrees of $x_{2t-2}$  and $x_{2t-1}$, we can get that  $u\in(x_{2t-2}x_{2t-1})\subseteq I(C_{\bf w}^n)$ if $k_{2t-3}=1$.
   If $k_{2t-3}=0$, then Eq. (\ref{eqn: equality16}) becomes
   $u(x_1x_2)^{{\bf w}_1}\prod\limits_{k=3}^{2t-2}x_k=hu_{2t-2}\prod\limits_{i=1}^{2t-4}u_i^{k_i}$.  By arguments similar to the case (a) above, we deduce
$k_{2i-1}=k_{2i}=1$  for some $i\in [t-2]$. Again comparing the
degrees of $x_{2i}$ and $x_{2t-1}$ in the above expression, we  can get that $u\in \bigcup\limits_{i=1}^{t-2}(x_{2i}x_{2t-1})\subseteq (\bigcup\limits_{p=1}^{t-2}\bigcup\limits_{q=0}^{t-p-2}\{x_{2p}x_{2p+2q+3}\})$.

     	(iii) When $k_n=k_{2t-2}=0$,  Eq. (\ref{eqn: equality16}) becomes
     	\begin{align}\label{eqn: equality17}
     		u(x_1x_2)^{{\bf w}_1}\prod\limits_{k=3}^{2t-2}x_k=h\prod\limits_{i=1}^{2t-3}u_i^{k_i}.
     	\end{align}
     	Likewise,   $k_{2i-1}=k_{2i}=1$ for some $i\in [t-2]$.  Let $a=\min\{i\in [t-2]\mid k_{2i-1}=k_{2i}=1\}$. We divide into two subcases:
     	
     	(a)	If $a=1$, then  $k_{2b}=k_{2b+1}=1$ for some $b\in [t-2]$. Otherwise,  $\sum\limits_{j=1}^{2t-3}k_i\leq  (2t-3)-(t-2)=t-1$, a contradiction.
     By comparing the degrees of $x_2$ and $x_{2b+1}$ in Eq. (\ref{eqn: equality17}), we can get 	$u\in (x_2x_{2b+1})\subseteq I(C_{\bf w}^n)+ (\bigcup\limits_{p=1}^{t-2}\bigcup\limits_{q=0}^{t-p-2}\{x_{2p}x_{2p+2q+3}\})$.
       	
     	(b) If $2\leq a\leq t-2$, then  $k_{2i-1}=0$ or $k_{2i}=0$ for any $i\in [a-1]$. So
      $k_{2b}=k_{2b+1}=1$ for some $b\in [t-2]\backslash[a-1]$. Otherwise,  $\sum\limits_{j=1}^{2t-3}k_i\leq(2t-3)-(a-1)-(t-a-1)=t-1$, a contradiction.
     		By comparing the degrees of $x_{2a}$ and $x_{2b+1}$ in Eq. (\ref{eqn: equality17}), we can get that $u\in (x_{2a}x_{2b+1})\subseteq I(C_{\bf w}^n)+ (\bigcup\limits_{p=1}^{t-2}\bigcup\limits_{q=0}^{t-p-2}\{x_{2p}x_{2p+2q+3}\})$.
     \end{proof}

\begin{Theorem}\label{1n-cycle}
	Let  $t$  and $C_{\bf w}^n$ be  as in Theorem \ref{1n-cyclef1}, and let
	\[
	f=
	\begin{cases}
		x_3(x_1x_2)^{{\bf w}_1}, &\text{if $t=2$,} \\ 	
		x_3(x_1x_2)^{{\bf w}_1}\prod\limits_{k=3}^{t}(x_kx_{k+1}), &\text{if $3 \leq t<\lceil\frac{n+1}{2}\rceil$,} \\
	\end{cases}
	\]
	then   $(I(C_{\bf w}^n)^t:f)=I(C_{\bf w}^n)+\sum\limits_{k=2}^{t+2}(x_k)+(x_n)$.
\end{Theorem}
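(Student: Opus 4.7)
The plan is to verify both inclusions directly, following the template of Theorem \ref{1n-cyclef2}. For the direction $\supseteq$, I first note that
\[
f \;=\; x_3\cdot u_1\cdot u_3 u_4\cdots u_t \;\in\; I(C_{\bf w}^n)^{\,t-1},
\]
since $u_k=x_kx_{k+1}$ for $k\neq 1$; the case $t=2$ is similar. Hence $I(C_{\bf w}^n)\cdot f\subseteq I(C_{\bf w}^n)^{t}$, giving $I(C_{\bf w}^n)\subseteq (I^{t}:f)$. For each variable $x_k$ in the asserted generating set on the right, I write an explicit factorization of $x_kf$ as a monomial times a product of $t$ generators of $I$. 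The key identities are $x_2f=u_1u_2u_3\cdots u_t$, $x_3f=x_1^{{\bf w}_1}x_2^{{\bf w}_1-2}\cdot u_2^{2}u_3\cdots u_t$ (using $u_1x_3^{2}=x_1^{{\bf w}_1}x_2^{{\bf w}_1-2}u_2^{2}$, which requires ${\bf w}_1\ge 2$), and $x_nf=x_1^{{\bf w}_1-1}x_2^{{\bf w}_1-1}\cdot u_n u_2 u_3\cdots u_t$ (re-routing $x_1$ through the edge $u_n=x_nx_1$). For $4\le k\le t+2$, a parity-dependent factorization of the form $u_1\cdot u_3^{2}u_5^{2}\cdots u_{k-1}u_k\cdots u_t$ or $x_1^{{\bf w}_1}x_2^{{\bf w}_1-2}\cdot u_2^{2}u_4^{2}\cdots$ works, again using ${\bf w}_1\ge 2$ whenever absorption via $u_2^{2}$ is needed.

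For the direction $\subseteq$, I use the fact that the colon of a sum of monomial ideals by a monomial is the sum of the colons: writing $M_{(k_i)}=\prod_{i=1}^{n}u_i^{k_i}$,
\[
(I(C_{\bf w}^n)^{t}:f) \;=\; \sum_{\substack{k_1,\ldots,k_n\ge 0\\ \sum_i k_i=t}}\bigl(M_{(k_i)}/\gcd(f,M_{(k_i)})\bigr).
\]
It therefore suffices to show that for every such tuple, the principal generator $g:=M/\gcd(f,M)$ lies in $I(C_{\bf w}^n)+\sum_{k=2}^{t+2}(x_k)+(x_n)$. Since $\operatorname{supp}(f)=\{x_1,x_2,\ldots,x_{t+1}\}$, a short case analysis disposes of the ``boundary'' cases: if $k_i\ge 1$ for some $t+2\le i\le n-1$, then $u_i\mid g$ (so $g\in I$); if $k_{t+1}\ge 1$, then $\gcd(f,u_{t+1})=x_{t+1}$ and $x_{t+2}\mid g$; and if $k_n\ge 1$, then $\gcd(f,u_n)=x_1$ and $x_n\mid g$. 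In the remaining case $k_i=0$ for all $i\in\{t+1,\ldots,n\}$ with $\sum_{i=1}^{t}k_i=t$, I proceed by subcases on $(k_1,k_2)$, mirroring the proof of Theorem \ref{1n-cyclef2}; a careful comparison of the $x_k$-exponents of $g$ and $f$ (using that $f$ carries exponent ${\bf w}_1$ on $x_1,x_2$, exponent $2$ on $x_3,\ldots,x_t$, and exponent $1$ on $x_{t+1}$), together with the pigeonhole constraint that $t$ units are distributed among $t$ indices, forces $x_k\mid g$ for some $k\in\{2,\ldots,t+1\}$, or $u_j\mid g$ for some $j$.

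I expect the main obstacle to be the last subcase above, namely tracking the ``overflow'' exponent vector of $M$ over $\gcd(f,M)$ to pin down which variable or edge survives. The bookkeeping is essentially identical in spirit to Theorem \ref{1n-cyclef2}, but the new $f$ has a doubled exponent at $x_3$ and a run of consecutive $u_3u_4\cdots u_t$ factors, so each subcase requires a small modification, typically redistributing one $x_3$-power between $\gcd(f,M)$ and $g$ via the identity $u_1 x_3^2 = x_1^{{\bf w}_1} x_2^{{\bf w}_1-2} u_2^2$ used already in the forward direction.
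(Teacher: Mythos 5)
Your proposal is correct and takes essentially the same route as the paper: explicit edge factorizations of $x_kf$ (using ${\bf w}_1\ge 2$ to absorb powers via $u_2^2$) for the inclusion $\supseteq$, and an exponent-vector case analysis of the equations $uf=h\prod_{i=1}^{n}u_i^{k_i}$ for $\subseteq$, which is exactly how the paper argues (it treats $t=2$ in detail and defers $3\le t<\lceil\frac{n+1}{2}\rceil$ to the argument of Theorem \ref{2n-cycle}(1)). The one subcase you leave sketched, namely $k_i=0$ for all $i\in\{t+1,\dots,n\}$, does close by the degree comparison you indicate: the conditions $\deg_{x_j}\bigl(\prod_{i=1}^{t}u_i^{k_i}\bigr)\le\deg_{x_j}(f)$ for all $j\in\{2,\dots,t+1\}$ are incompatible with $\sum_{i=1}^{t}k_i=t$, so some $x_j$ with $2\le j\le t+1$ must divide the colon generator.
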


\begin{proof}
	Let $I=I(C_{\bf w}^n)$.
 If $t=2$, then it is trival that $I\subseteq (I^2:f)$. Since $x_2f=(x_1x_2)^{{\bf w}_1}(x_2x_3)\in I^2$, $x_3f=x_1^{{\bf w}_1}x_2^{{{\bf w}_1}-2}(x_2x_3)^2\in I^2$, $x_4f=(x_1x_2)^{{\bf w}_1}(x_3x_4)\in I^2$ and $x_nf=(x_1x_2)^{{\bf w}_1-1}(x_2x_3)(x_nx_1)\in I^2$, it follows that $\sum\limits_{k=2}^{4}(x_k)+(x_n)\subseteq(I^2:f)$.
	
	On the other hand, let  $u \in \mathcal{G}(I^2: f)$ be a monomial, then $uf=h\prod\limits_{i=1}^{n}u_i^{k_i}$ for some monomial $h$, where $\sum\limits_{i=1}^{n}k_i=2$ and each $k_i\geq 0$. Thus
	\begin{align}\label{43}
		ux_3(x_1x_2)^{{\bf w}_1}=h\prod\limits_{i=1}^{n}u_i^{k_i}.
	\end{align}
	If  $k_i\geq 1$ for some $i\ge 3$, then $u\in I(C_{\bf w}^n)+(x_4,x_n)$. Next, we assume that $k_i=0$ for any $i\ge 3$. Then  Eq. (\ref{43}) becomes $ux_3(x_1x_2)^{{\bf w}_1}=h\prod\limits_{i=1}^{2}u_i^{k_i}$. It is easy to see that $u\in (x_2)$ if $k_1=2$ and $k_2=0$, or  $k_1=1$ and $k_2=1$, and that $u\in (x_3)$ if $k_1=0$ and $k_2=2$.
	
If $3 \leq t<\lceil\frac{n+1}{2}\rceil$, then $(I^t:f)=I(C_{\bf w}^n)+\sum\limits_{k=2}^{t+2}(x_k)+(x_n)$ as  similarly discussed in the proof of  Theorem \ref{2n-cycle}(1).
\end{proof}

\begin{Theorem}\label{}
 Let $1 \leq t<\lceil\frac{n+1}{2}\rceil$  be an integer and    $C_{\bf w}^n$    as in Theorem \ref{1n-cyclef1}, then $\depth (S/I(C_{\bf w}^n)^t)=\lceil\frac{n-t}{3}\rceil$.
\end{Theorem}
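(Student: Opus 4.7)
The case $t=1$ is Theorem~\ref{cycle}, so I assume $t\ge 2$. The strategy is to establish matching upper and lower bounds using two different auxiliary monomials supplied by earlier results.

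\medskip
\noindent\emph{Upper bound.} I plan to apply Lemma~\ref{up} with the monomial $f$ from Theorem~\ref{1n-cycle}, which gives
\[
(I(C_{\bf w}^n)^t:f)=I(C_{\bf w}^n)+\sum_{k=2}^{t+2}(x_k)+(x_n).
\]
Killing the variables $x_2,\ldots,x_{t+2},x_n$ annihilates every generator of $I(C_{\bf w}^n)$ except the edges of the induced path on $\{x_{t+3},\ldots,x_{n-1}\}$, and leaves $x_1$ as an isolated variable. Lemma~\ref{sum1} combined with Lemma~\ref{path} then yields
\[
\depth\bigl(S/(I(C_{\bf w}^n)^t:f)\bigr)=1+\max\{\lceil(n-t-3)/3\rceil,\,1\}=\lceil(n-t)/3\rceil,
\]
and Lemma~\ref{up} gives $\depth(S/I(C_{\bf w}^n)^t)\le \lceil(n-t)/3\rceil$.

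\medskip
\noindent\emph{Lower bound.} Take the monomial $f'=(x_1x_2)^{{\bf w}_1}\prod_{k=3}^{2t-2}x_k$ appearing in Theorems~\ref{1n-cyclef1} and~\ref{1n-cyclef2}, and apply Lemma~\ref{exact} to
\[
0\to \frac{S}{(I(C_{\bf w}^n)^t:f')}(-s)\xrightarrow{\cdot f'}\frac{S}{I(C_{\bf w}^n)^t}\to \frac{S}{(I(C_{\bf w}^n)^t,f')}\to 0.
\]
Theorem~\ref{1n-cyclef1} already provides $\depth(S/(I(C_{\bf w}^n)^t,f'))\ge \lceil(n-t)/3\rceil$ for free, so the lower bound reduces to proving $\depth(S/(I(C_{\bf w}^n)^t:f'))\ge \lceil(n-t)/3\rceil$.

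\medskip
\noindent\emph{Bounding $\depth(S/(I(C_{\bf w}^n)^t:f'))$ from below.} I would use the explicit description of $(I(C_{\bf w}^n)^t:f')$ from Theorem~\ref{1n-cyclef2} and peel off the extra generators one at a time through a chain of short exact sequences of the form $0\to S/(J:x_i)\to S/J\to S/(J,x_i)\to 0$, choosing at each stage a variable from $\{x_2,x_3,\ldots,x_{2t-1},x_n\}$ so that the resulting ideal falls into one of the classes $I(C_{\bf w}^n)^s+I(H_{\bf w})$ or $I(P_{\bf w}^n)^s+I(H_{\bf w})$ with $s<t$. Each such intermediate quotient has depth at least $\lceil(n-t)/3\rceil$ by Theorem~\ref{cyclesubgraph}, Theorem~\ref{pathsubgraph}, Lemma~\ref{path3}, or the inductive hypothesis on $t$, and Lemma~\ref{exact} then propagates the bound through the chain, giving the desired inequality. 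Combined with the upper bound, this yields the equality $\depth(S/I(C_{\bf w}^n)^t)=\lceil(n-t)/3\rceil$.

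\medskip
\noindent\emph{Main obstacle.} The hard part is the combinatorial bookkeeping for $\depth(S/(I(C_{\bf w}^n)^t:f'))\ge \lceil(n-t)/3\rceil$: the extra quadratic generators in Theorem~\ref{1n-cyclef2} form an intricate web of relations among $\{x_2,\ldots,x_{2t-1},x_n\}$ (involving $x_n^2$, the pencil $x_3x_{2i+3}$, the pencil $x_{2j+1}x_n$, and the double-indexed family $x_{2p}x_{2p+2q+3}$), so the order in which one colons out or kills variables must be chosen very carefully to ensure that every intermediate ideal genuinely matches the hypotheses of Theorem~\ref{pathsubgraph} or Theorem~\ref{cyclesubgraph} and that the minimum taken by Lemma~\ref{exact} at each step is in fact $\lceil(n-t)/3\rceil$ rather than something smaller.
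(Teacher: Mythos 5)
Your skeleton is the same as the paper's: the upper bound via Lemma \ref{up} applied to the $f$ of Theorem \ref{1n-cycle}, and the lower bound via the exact sequence (\ref{eqn: equality1}) for $g=(x_1x_2)^{{\bf w}_1}\prod_{k=3}^{2t-2}x_k$, using Theorem \ref{1n-cyclef1} for $(I^t,g)$ and reducing everything to $\depth(S/(I^t:g))\geq\lceil\frac{n-t}{3}\rceil$. But that last inequality, which you yourself label the ``main obstacle,'' is precisely where the paper does all of its remaining work, and your proposal leaves it unexecuted; as written this is a genuine gap, not bookkeeping. Moreover, your guess about how it should be executed is off target: by Theorem \ref{1n-cyclef2} the ideal $\frakQ=(I^t:g)$ already contains the full edge ideal $I(C_{\bf w}^n)$ (first power) together with $x_n^2$ and quadrics in $x_2,\ldots,x_{2t-1},x_n$, so no intermediate ideal of the form $I(C_{\bf w}^n)^s+I(H_{\bf w})$ or $I(P_{\bf w}^n)^s+I(H_{\bf w})$ with $s\geq 2$ can ever appear when you colon out or add variables; Theorems \ref{pathsubgraph} and \ref{cyclesubgraph}, Lemma \ref{path3} and induction on $t$ are not the relevant tools at this stage (they are consumed inside the proof of Theorem \ref{1n-cyclef1}, which you take as given). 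What is actually needed, and what the paper does, is an explicit computation: $(\frakQ:x_3)=I(P_{\bf w}^{n-2t})+(\bigcup_{i=0}^{t-2}\{x_{2i+3}\})+(x_2,x_4,x_n)$ and $((\frakQ,x_3):x_n)=I(P_{\bf w}^{n-2t-1})+(\bigcup_{j=1}^{t-1}\{x_{2j+1}\})+(x_1,x_{n-1},x_n)$, handled by Lemmas \ref{sum1}, \ref{path}, \ref{nontrivialpath1}, followed (for $t\geq 3$) by a chain of exact sequences colonning out the odd variables $x_{2t-1},x_{2t-3},\ldots,x_5$ from $(\frakQ,x_3,x_n)$ (the ideals $M_j$ and $H_j$), each intermediate ideal again being a path edge ideal plus variables. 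Your plan would have to be replaced by, or fleshed out into, this concrete computation before it constitutes a proof.

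There is also a slip in your upper bound. In the degenerate range $n-t\leq 3$ (which occurs for $(n,t)=(4,2),(5,2),(6,3)$, all allowed by $t<\lceil\frac{n+1}{2}\rceil$), killing $x_2,\ldots,x_{t+2},x_n$ leaves no residual path at all, so there is no path ideal and no ``$\max\{\cdot,1\}$'' term: the correct count is $\depth\bigl(S/(I^t:f)\bigr)=1+\lceil\frac{(n-1)-(t+2)}{3}\rceil=\lceil\frac{n-t}{3}\rceil$ in all cases, as in the paper. Your displayed chain $1+\max\{\lceil\frac{n-t-3}{3}\rceil,1\}$ evaluates to $2$ in those cases, while the bound you need is $\lceil\frac{n-t}{3}\rceil=1$, so the claimed equalities are false there and the upper bound would not close the theorem for those $(n,t)$. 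This is easy to repair (drop the max and treat the empty or edgeless residual path separately), but it should be fixed.
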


\begin{proof}
Let $I=I(C_{\bf w}^n)$.  If $t=1$, then the desired result  follows from Theorem \ref{cycle}. Now we assume that $2\leq t<\lceil\frac{n+1}{2}\rceil$. If $t=2$, we  choose $f=x_3(x_1x_2)^{{\bf w}_1}$, otherwise, we  choose $f=x_3(x_1x_2)^{{\bf w}_1}\prod\limits_{k=3}^{t}(x_kx_{k+1})$. Thus, by Lemma \ref{up} and Theorem \ref{1n-cycle}, $\depth (S/I^t)\leq\depth (S/(I^t:f))=1+\lceil\frac{(n-1)-(t+2)}{3}\rceil=\lceil\frac{n-t}{3}\rceil$.

 It remains to  prove  $\depth (S/I^t)\geq \lceil\frac{n-t}{3}\rceil$. Let $g=(x_1x_2)^{{\bf w}_1}\prod\limits_{k=3}^{2t-2}x_k$ and  $\frakQ=(I^t:g)$.
 By the sequence (\ref{eqn: equality1}) in Lemma \ref{up} and Theorem \ref{1n-cyclef1}, it suffices to prove that  $\depth(S/\frakQ)\geq \lceil\frac{n-t}{3}\rceil$.
By Lemma \ref{1n-cyclef2}, we deduce
	\begin{align*}
		(\frakQ:x_3)&=[I+(x_n^2)+(\bigcup\limits_{i=0}^{t-2}\{x_3x_{2i+3}\})+(\bigcup\limits_{j=1}^{t-1}\{x_{2j+1}x_{n}\})+(\bigcup\limits_{p=1}^{t-2}\bigcup\limits_{q=0}^{t-p-2}\{x_{2p}x_{2p+2q+3}\})]:x_3\\
		&=I(C_{\bf w}^n\backslash\{x_2,x_4\})+(\bigcup\limits_{i=0}^{t-2}\{x_{2i+3}\})+(\bigcup\limits_{p=1}^{t-2}\bigcup\limits_{q=0}^{t-p-2}\{x_{2p}x_{2p+2q+3}\})+(x_2,x_4,x_n)\\
		&=I(P_{\bf w}^{n-2t})+(\bigcup\limits_{i=0}^{t-2}\{x_{2i+3}\})+(x_2,x_4,x_n)
	\end{align*}
	where $P_{\bf w}^{n-2t}$ is an induced subgraph of $C_{\bf w}^n$ on the set $\{x_{2t},\ldots,x_{n-1}\}$, and
 \begin{align*}
 	((\frakQ,x_3):x_n)&=[I+(x_3,x_n^2)+(\bigcup\limits_{j=2}^{t-1}\{x_{2j+1}x_{n}\})+(\bigcup\limits_{p=1}^{t-2}\bigcup\limits_{q=0}^{t-p-2}\{x_{2p}x_{2p+2q+3}\})]:x_n\\
 	&=I(C_{\bf w}^n\backslash\{x_1,x_{n-1}\})+(\bigcup\limits_{j=2}^{t-1}\{x_{2j+1}\})+(\bigcup\limits_{p=1}^{t-2}\bigcup\limits_{q=0}^{t-p-2}\{x_{2p}x_{2p+2q+3}\})\\
 	&+(x_1,x_3,x_{n-1},x_n)\\
 	&=I(P_{\bf w}^{n-2t-1})+(\bigcup\limits_{j=1}^{t-1}\{x_{2j+1}\})+(x_1,x_{n-1},x_n),
 \end{align*}
 where $P_{\bf w}^{n-2t-1}$ is an induced subgraph of $C_{\bf w}^n$ on the set $\{x_{2t},\ldots,x_{n-2}\}$.
 Therefore, by Lemmas \ref{sum1}, \ref{path} and \ref{nontrivialpath1}, we obtain that  $\depth(S/(\frakQ:x_3))\geq \lceil\frac{n-t}{3}\rceil$ and $\depth(S/((\frakQ,x_3):x_n))\geq \lceil\frac{n-t}{3}\rceil$. We consider the following two cases:

 (1) If $t=2$, then  $(\frakQ,x_3,x_n)=I(C_{\bf w}^n)+(x_3,x_n)$ by Lemma \ref{1n-cyclef2}. Thus, we have  $\depth(S/(\frakQ,x_3,x_n))=\lceil\frac{n-4}{3}\rceil+1=\lceil\frac{n-1}{3}\rceil$. Using Lemma \ref{exact} to  the following  exact sequences
\begin{gather}\label{eqn: equality19}
		\begin{matrix}
			0 & \longrightarrow & \frac{S}{\frakQ:x_3}(-1) & \stackrel{\cdot x_3}\longrightarrow & \frac{S}{\frakQ} & \longrightarrow & \frac{S}{(\frakQ,x_3)} & \longrightarrow & 0,\\
			0 & \longrightarrow & \frac{S}{(\frakQ,x_3):x_n}(-1) & \stackrel{\cdot x_n} \longrightarrow & \frac{S}{(\frakQ,x_3)} & \longrightarrow & \frac{S}{(\frakQ,x_3,x_n)} & \longrightarrow & 0,
		\end{matrix}
	\end{gather}
  we can get $\depth(S/\frakQ)\geq \lceil\frac{n-t}{3}\rceil$.
	
	(2) If $3 \leq t<\lceil\frac{n+1}{2}\rceil$, then let $M_{j}=(((\frakQ,x_3,x_n)+(\bigcup\limits_{s=j+1}^{t}\{x_{2s-1}\})):x_{2j-1})$ and $H_{j}=(\frakQ,x_3,x_n)+(\bigcup\limits_{s=j}^{t}\{x_{2s-1}\})$ for any $3\leq j\leq t$, where $(\bigcup\limits_{s=j}^{t}\{x_{2s-1}\})=0$ if $j=t$.  By Lemma \ref{1n-cyclef2}, we get that $H_3=I(P_{\bf w}^{n-2t})+(\bigcup\limits_{j=3}^{t}\{x_{2j-1}\})+(u_1,x_3,x_n)$ and $M_j=I(P_{\bf w}^{n-2t})+(\bigcup\limits_{s=j+1}^{t}\{x_{2s-1}\})+(\bigcup\limits_{m=1}^{j-2}\{x_{2m}\})+(x_3,x_{2j-2},x_{2j},x_n)$ for any $3\leq j\leq t$,
	where $P_{\bf w}^{n-2t}$ is an induced subgraph of $C_{\bf w}^n$ on $\{x_{2t},\ldots,x_{n-1}\}$.
 Therefore, we can deduce that $\depth(S/H_3)\geq \lceil\frac{n-t}{3}\rceil$ and  $\depth(S/M_j)\geq \lceil\frac{n-t}{3}\rceil$.
 By the following  exact sequences
 \begin{gather*}
		\begin{matrix}
			0 & \longrightarrow & \frac{S}{M_t}(-1) & \stackrel{\cdot x_{2t-1}} \longrightarrow & \frac{S}{(\frakQ,x_3,x_n)} & \longrightarrow & \frac{S}{H_t} & \longrightarrow & 0,\\
			0 & \longrightarrow & \frac{S}{M_{t-1}}(-1) & \stackrel{\cdot x_{2t-3}} \longrightarrow & \frac{S}{H_t} & \longrightarrow & \frac{S}{H_{t-1}} & \longrightarrow & 0,\\
			&  &\vdots&  &\vdots&  &\vdots&  &\\
			0 & \longrightarrow & \frac{S}{M_3}(-1) & \stackrel{\cdot x_5}\longrightarrow & \frac{S}{M_4} & \longrightarrow & \frac{S}{H_3} & \longrightarrow & 0,
		\end{matrix}
	\end{gather*}
we obtain  $\depth(S/(\frakQ,x_3,x_n))\geq \lceil\frac{n-t}{3}\rceil$. Again using the  exact sequence (\ref{eqn: equality19}), we get that $\depth (S/\frakQ)\geq \lceil\frac{n-t}{3}\rceil$.
\end{proof}

\begin{Theorem}\label{odd-cycle}
Let  $C_{\bf w}^n$ be an $n$-cycle, where $n$ is odd. Then
\[
\depth (S/I(C_{\bf w}^n)^t)= 0 \text{\ for all\ } t\geq \frac{n+1}{2}.
\]
\end{Theorem}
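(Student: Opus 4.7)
The plan is to show that the maximal ideal $\mathfrak{m}=(x_1,\ldots,x_n)$ lies in $\Ass(S/I(C_{\bf w}^n)^t)$ for every $t\geq (n+1)/2$, which forces $\depth(S/I(C_{\bf w}^n)^t)=0$. Equivalently, I will produce for each such $t$ a monomial $f_t\notin I^t$ with $(I^t:f_t)=\mathfrak{m}$, and then Lemma~\ref{up} gives $\depth(S/I^t)\leq \depth(S/\mathfrak{m})=0$. Write $I=I(C_{\bf w}^n)$, $t_0=(n+1)/2$, and $s=t-t_0\geq 0$. My candidate is
\[
f_t=(x_1x_2)^{{\bf w}_1}(x_2x_3)^{s}\,x_3x_4\cdots x_n.
\]

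I first verify $\mathfrak{m}\cdot f_t\subseteq I^t$. Since $x_2x_3\in I$, it suffices to check $x_i\cdot f_{t_0}\in I^{t_0}$ for every $i\in[n]$, for then $x_i f_t=(x_i f_{t_0})(x_2x_3)^s\in I^{t_0}\cdot I^s=I^t$. For each index $i$ I will produce an explicit product of $t_0$ generators of $I$ that divides $x_i f_{t_0}$; the precise selection depends on the parity of $i$ and is based on a matching of the cycle with one vertex doubled. When $i$ is even, the product uses $(x_1x_2)^{{\bf w}_1}$ together with $e_{i-1},e_i$ (doubling $x_i$) and alternating cycle edges matching the paths $\{x_3,\ldots,x_{i-2}\}$ and $\{x_{i+2},\ldots,x_n\}$, both of which have even length because $n$ is odd. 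When $i$ is odd, the weighted edge is avoided entirely: $x_1$ is paired with $x_n$ via $e_n$, the vertex $x_i$ is doubled via $e_{i-1},e_i$, and the remaining even-length paths are matched by alternating cycle edges. The boundary indices $i\in\{1,2\}$ are handled similarly using $(x_1x_2)^{{\bf w}_1}$ plus a matching of the rest.

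The non-membership $f_t\notin I^t$ is the more delicate step. Suppose $g_1\cdots g_t\mid f_t$ with each $g_j\in\mathcal{G}(I)$, and let $k$ be the number of $g_j$'s equal to $(x_1x_2)^{{\bf w}_1}$. Since $\deg_{x_1}f_t={\bf w}_1$, one has $k\leq 1$; the case $k=1$ is ruled out by $\deg(g_1\cdots g_t)\geq 2{\bf w}_1+2(t-1)>2{\bf w}_1+2t-3=\deg f_t$. For $k=0$, writing $a_j$ for the multiplicity of $e_j$ in the product, divisibility forces $a_2+a_3\leq 1+s$ and $a_j+a_{j+1}\leq 1$ for $3\leq j\leq n-1$. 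Combined with $\sum_{j=2}^n a_j=t$, these first force $a_2=1+s$ and $a_3=0$; then $\sum_{j=4}^n a_j$ is bounded above by the maximum independent set on a path of $n-3$ vertices, namely $(n-3)/2$ since $n-3$ is even, so $\sum a_j\leq (1+s)+(n-3)/2<t$, a contradiction. The main obstacle is this tight combinatorial count, and the odd parity of $n$ is decisive at exactly this step: in the even-$n$ analogue the corresponding bound $(n-2)/2$ permits equality and no contradiction emerges, consistent with the fact that in that case $\depth$ drops only to $1$ rather than $0$.
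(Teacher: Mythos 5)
Your proposal is correct and takes essentially the same route as the paper: both arguments exhibit a monomial $f\notin I^t$ with $(I^t:f)=\mathfrak{m}$, so that $\mathfrak{m}\in\Ass_S(S/I^t)$ and hence $\depth(S/I^t)=0$; the only differences are that your witness carries the surplus $t-\frac{n+1}{2}$ as extra factors $(x_2x_3)^s$ on a trivial edge while the paper uses extra powers of $(x_1x_2)^{{\bf w}_1}$, and that you actually prove the non-membership $f_t\notin I^t$ (via the $a_2+a_3\le s+1$ and matching-count bound), which the paper only asserts. A purely cosmetic point: at $i=n$ the edge $e_i=e_n$ already pairs $x_1$ with $x_n$, so no separate use of $e_n$ is needed there.
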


\begin{proof}
	Let $I=I(C_{\bf w}^n)$ and $\mathfrak{m}=(x_1,x_2,\ldots,x_n)$. It suffices to prove $\mathfrak{m}\in Ass_S(I^t)$ by Lemma \ref{up}.
Choose $f=(x_1x_2)^{(t-\frac{n-1}{2}){\bf w}_1}\prod\limits^{n}_{j=3}x_j$, then $f\notin I^t$ and
 $(I^t:f)\subseteq \mathfrak{m}$. On the other hand, $x_1f=u_1^{t-\frac{n-1}{2}}\prod\limits^{\frac{n-1}{2}}_{k=1}u_{2k+1}\in I^t$ and $x_2f=u_1^{t-\frac{n-1}{2}}\prod\limits^{\frac{n-1}{2}}_{k=1}u_{2k}\in I^t$.
If $i$ is an integer with  $3\leq i\leq n$. We distinguish between two cases:
\begin{itemize}
\item[(a)] If $i$ is even, then
	$x_if=u_1^{t-\frac{n-1}{2}}(\prod\limits^{\frac{i}{2}-1}_{k=1}u_{2k+1})(\prod\limits^{\frac{n-1}{2}}_{m=\frac{i}{2}}u_{2m})\in I^t$.
\item[(b)] If $i$ is odd, then
$x_if=(x_1x_2)^{{\bf w}_1-1}u_1^{t-\frac{n-1}{2}-1}(\prod\limits^{\frac{i-1}{2}}_{k=1}u_{2k})(\prod\limits^{\frac{n-1}{2}}_{m=\frac{i-1}{2}}u_{2m+1})\in I^t$.
\end{itemize}
Therefore, $(I^t:f)\supseteq \mathfrak{m}$, which implies that $(I^t:f)=\mathfrak{m}$ and so $\mathfrak{m}\in Ass_S(I^t)$. The desired result follows.
\end{proof}

\begin{Theorem}\label{even1}
	Let  $C_{\bf w}^n$ be an $n$-cycle, where $n$ is even,  and ${\bf w}_1\geq 2$ and ${\bf w}_i=1$ for any $i\neq 1$. Let $t\geq\frac{n}{2}+1$ be an integer and  $f=(x_1x_2)^{(t-\frac{n}{2}){\bf w}_1}\prod\limits_{k=3}^{n}x_k$, then
	\[
	(I(C_{\bf w}^n)^t:f)=(\bigcup\limits_{k=1}^{\frac{n}{2}-1}\{x_1x_{2k}\})+(\bigcup\limits_{i=2}^{\frac{n}{2}-1}\bigcup\limits_{j=1}^{\frac{n}{2}-1}\{x_{2i+1}x_{2j}\})+(x_3,x_n).
	\]
\end{Theorem}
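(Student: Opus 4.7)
The plan is to establish the two containments separately, adapting the strategy used in the proofs of Theorem~\ref{2n-cycle} and Theorem~\ref{1n-cyclef2}. Throughout we use the shorthand $u_i=(x_ix_{i+1})^{{\bf w}_i}$, so that $u_1=(x_1x_2)^{{\bf w}_1}$ and $u_i=x_ix_{i+1}$ for $2\le i\le n$ (with $x_{n+1}=x_1$). Since $n$ is even, we have the key factorization $f=u_1^{t-n/2}\,u_3u_5\cdots u_{n-1}\in I(C_{\bf w}^n)^{t-1}$, which immediately yields $I(C_{\bf w}^n)\subseteq (I(C_{\bf w}^n)^t:f)$; in particular every edge of the cycle is in the colon ideal and is subsumed by some generator on the right-hand side $R$.

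For the containment $R\subseteq (I(C_{\bf w}^n)^t:f)$ we verify, for each listed generator $g$, an explicit factorization $g\cdot f=h\cdot\prod u_i^{k_i}$ into $t$ generators of $I(C_{\bf w}^n)$. For $k=1$ we have $x_1x_2\cdot f=u_1^{t-n/2}u_2u_4\cdots u_n$, and for $2\le k\le n/2-1$,
\[
x_1x_{2k}\cdot f=u_1^{t-n/2}(u_3u_5\cdots u_{2k-3})(u_{2k-1}u_{2k})(u_{2k+2}u_{2k+4}\cdots u_n).
\]
For $x_{2i+1}x_{2j}$ an analogous detour pairing doubles the edges near $x_{2i+1}$ and $x_{2j}$, with the precise form depending on whether $2j<2i+1$ or $2j>2i+1$ and occasionally using $u_n$ to borrow from the weight reserve. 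The two exceptional generators $x_3$ and $x_n$ require using one fewer copy of $u_1$ and doubling an edge adjacent to the weighted edge:
\[
x_3\cdot f=x_1^{{\bf w}_1-1}x_2^{{\bf w}_1-2}\cdot u_1^{t-n/2-1}u_2^2u_4u_6\cdots u_{n-2}u_n,
\]
\[
x_n\cdot f=x_1^{{\bf w}_1-2}x_2^{{\bf w}_1-1}\cdot u_1^{t-n/2-1}u_2u_4\cdots u_{n-2}u_n^2.
\]
Both are well-defined factorizations precisely because ${\bf w}_1\ge 2$.

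For the reverse containment, take a monomial $u\in\mathcal{G}((I(C_{\bf w}^n)^t:f))$ and write $u\cdot f=h\cdot u_1^{k_1}\prod_{i=2}^{n}u_i^{k_i}$ with $\sum k_i=t$ and $h$ a monomial. We split on the value of $k_1$ and on the support of the remaining $k_i$'s, comparing the $x_\ell$-degrees on both sides for each $\ell$. When $k_1=t-n/2$ the weight budget is tight, forcing $k_2=k_n=0$; then walking the cycle and using the parity that $n$ is even, one shows that $u$ must be divisible by a product $x_1x_{2k}$ or $x_{2i+1}x_{2j}$ for suitable indices. When $k_1\le t-n/2-1$ the surplus weight $(x_1x_2)^{(t-n/2-k_1){\bf w}_1}$ in $f$ permits a pairing that doubles either $u_2$ or $u_n$, which forces $x_3\mid u$ or $x_n\mid u$, so $u\in (x_3,x_n)\subseteq R$.

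The main obstacle will be making the case analysis in the $\subseteq$ direction exhaustive and tidy, especially the boundary situations where the standard even pairing $u_2u_4\cdots u_n$ just barely fits. As in Theorem~\ref{1n-cyclef2}, the bookkeeping of the $x_1$- and $x_2$-degrees of the product $u_1^{k_1}\prod_{i\ge 2}u_i^{k_i}$ (whose slack relative to the supply side is controlled by ${\bf w}_1$ and by $k_2,k_n$) will be the decisive accounting, together with the parity observation that any walk in the even cycle from an odd-indexed vertex to an even-indexed vertex traverses an odd number of edges.
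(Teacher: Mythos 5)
Your forward containment is essentially sound where you make it explicit: the factorizations for $x_1x_2$, $x_1x_{2k}$, $x_3$ and $x_n$ agree with the paper's. But you never verify the generators $x_{2i+1}x_{2j}$, and your description of that case is off: when $i<j$ the paper borrows from the weighted edge, writing $x_{2i+1}x_{2j}f=(x_1x_2)^{{\bf w}_1-1}u_1^{t-\frac n2-1}\prod_{\ell=1}^{i}u_{2\ell}\prod_{\ell=i}^{j-1}u_{2\ell+1}\prod_{\ell=j}^{\frac n2}u_{2\ell}$, i.e.\ the slack comes from $u_1$, not from $u_n$. The serious problem, however, is the reverse inclusion, which is the heart of the theorem and which you reduce to two claims that are both false. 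First, ``$k_1=t-\frac n2$ forces $k_2=k_n=0$'' fails: for $n=6$, $t=4$, ${\bf w}_1=2$, the generator $u=x_2x_5$ of the colon satisfies $uf=u_1u_2u_4u_5$, a factorization with $k_1=t-\frac n2=1$ and $k_2=1$ (and no factorization with $k_1=1$, $k_2=0$ exists). Second, ``$k_1\le t-\frac n2-1$ forces $x_3\mid u$ or $x_n\mid u$'' also fails: with the same data the generator $u=x_4x_5$ satisfies $uf=(x_1x_2)\cdot u_2u_4^2u_6$, a factorization with $k_1=0\le t-\frac n2-1$, yet $u\notin(x_3,x_n)$; it lies in the right-hand side only through the quadratic generators $x_{2i+1}x_{2j}$. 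Since a generator $u$ of the colon comes with an arbitrary factorization $uf=h\prod_i u_i^{k_i}$, your dichotomy neither exhausts the possibilities (you never address $k_1>t-\frac n2$, which has to be excluded or handled) nor draws valid conclusions in the two branches it does treat, and ``walking the cycle and using parity'' is not a proof of the decisive step.

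For comparison, the paper proves the inclusion $(I(C_{\bf w}^n)^t:f)\subseteq R$ by induction on $t$. For $t=\frac n2+1$ it first disposes of any $k_i\ge 2$ (which immediately yields $u\in(x_3,x_n)$ or $u\in(u_i)\subseteq R$); then, with all $k_i\le 1$, it splits on $k_1\in\{0,1\}$ and on $k_2,k_n$, and uses a counting argument (comparing $\sum_i k_i=t$ with the number of admissible indices) to force the existence of consecutive pairs $k_{2a-1}=k_{2a}=1$ and $k_{2b}=k_{2b+1}=1$; comparing the degrees of $x_{2a}$ and $x_{2b+1}$ on both sides then shows that a listed generator such as $x_{2a}x_{2b+1}$ divides $u$. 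For $t\ge\frac n2+2$ the case $k_1\ge 1$ is reduced to $t-1$ via $(I^t:f)\subseteq(I^{t-1}:f/(x_1x_2)^{{\bf w}_1})$, and the case $k_1=0$ repeats the counting argument. Some mechanism of this kind, converting an arbitrary factorization into membership in the quadratic part of $R$ rather than only in $(x_3,x_n)$, is exactly what your proposal is missing.
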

\begin{proof}
	Let $I=I(C_{\bf w}^n)$. First note that $x_3f=x_1^{{\bf w}_1-1}x_2^{{\bf w}_1-2}u_1^{t-\frac{n}{2}-1}u_2^2\prod\limits_{\ell=2}^{\frac{n}{2}}u_{2\ell}\in I^t$ and
$x_nf=x_1^{{\bf w}_1-2}x_2^{{\bf w}_1-1}u_1^{t-\frac{n}{2}-1}u_n^2\prod\limits_{\ell=1}^{\frac{n}{2}-1}u_{2\ell}\in I^t$.
Furthermore, if $k\in [\frac{n}{2}-1]$, then $x_1x_{2k}f=u_1^{t-\frac{n}{2}}\prod\limits_{\ell=1}^{k-1}u_{2\ell+1}\prod\limits_{\ell=k}^{\frac{n}{2}}u_{2\ell}\in I^t$, and if
$i\in [\frac{n}{2}-1]\backslash\{1\}$ and $j\in [\frac{n}{2}-1]$, then
	\[
	x_{2i+1}x_{2j}f=
	\begin{cases}
		u_1^{t-\frac{n}{2}}\prod\limits_{\ell=1}^{j-1}u_{2\ell+1}\prod\limits_{\ell=j}^{i}u_{2\ell}\prod\limits_{\ell=i}^{\frac{n}{2}-1}u_{2\ell+1}, &\text{if $i\geq j$,} \\ 	
		(x_1x_2)^{{\bf w}_1-1}u_1^{t-\frac{n}{2}-1}\prod\limits_{\ell=1}^{i}u_{2\ell}\prod\limits_{\ell=i}^{j-1}u_{2\ell+1}\prod\limits_{\ell=j}^{\frac{n}{2}}u_{2\ell}, &\text{if $i<j$.}
	\end{cases}
	\]
This implies that $x_{2i+1}x_{2j}f\in I^t$. Therefore, the inclusion relation ``$\supseteq$" holds.

On the other hand, let $u \in \mathcal{G}(I^n: f)$ be a monomial,
 then  $uf=h\prod\limits_{i=1}^{n}u_i^{k_i}$ for some  monomial $h$, where $\sum\limits_{i=1}^{n}k_i=t$ and   each $k_i\geq 0$.
	Thus
	\begin{align}\label{eqn: equality20}
		u(x_1x_2)^{(t-\frac{n}{2}){\bf w}_1}\prod\limits_{k=3}^{n}x_k=h\prod\limits_{i=1}^{n}u_i^{k_i}.
	\end{align}
	
	Now we prove that the inclusion relation ``$\subseteq$" holds by induction on $t$.

 If $t=\frac{n}{2}+1$,  then Eq. (\ref{eqn: equality20}) becomes
	\begin{align}\label{eqn: equality21}
		u(x_1x_2)^{{\bf w}_1}\prod\limits_{k=3}^{n}x_k=h\prod\limits_{i=1}^{n}u_i^{k_i}.
	\end{align}
If $k_2\geq 2$ or  $k_n\geq 2$,  then $u\in(x_3,x_n)$. If $k_i\geq 2$ for some $i\in [n-1]\backslash\{2\}$, then  $u\in (u_i)\subseteq (\bigcup\limits_{k=1}^{\frac{n}{2}-1}\{x_1x_{2k}\})+(\bigcup\limits_{i=2}^{\frac{n}{2}-1}\bigcup\limits_{j=1}^{\frac{n}{2}-1}\{x_{2i+1}x_{2j}\})$.
In the following, we consider the case where $0\leq k_i\leq 1$ for any $i\in [n]$ and divide into two cases:

 (I) If $k_1=1$, then Eq. (\ref{eqn: equality21}) becomes $u\prod\limits_{k=3}^{n}x_k=h\prod\limits_{i=2}^{n}u_i^{k_i}$ with $\sum\limits_{i=2}^{n}k_i=t-1$. There are four subcases:

(a) If $k_2=k_n=1$, then $u\in(x_1x_2)$.

 (b) If $k_2=1$ and $k_n=0$, then Eq. (\ref{eqn: equality21}) becomes  $u\prod\limits_{k=4}^{n}x_k=hx_2\prod\limits_{i=3}^{n-1}u_i^{k_i}$ with  $\sum\limits_{i=3}^{n-1}k_i=t-2$.
 Hence $k_{2i}=k_{2i+1}=1$ for some $i\in [\frac{n}{2}-1]$. Indeed, if $k_{2i}=0$ or $k_{2i+1}=0$ for any $i\in [\frac{n}{2}-1]$, then  $\sum\limits_{i=3}^{n-1}k_i\le (n-3)-(\frac{n}{2}-1)=\frac{n}{2}-2=t-3$, a contradiction. By comparing the degrees of $x_{2}$ and $x_{2i+1}$ in the above expression, we  can get $u\in(\bigcup\limits_{i=1}^{\frac{n}{2}-1}\{x_2x_{2i+1}\})\subseteq(\bigcup\limits_{i=2}^{\frac{n}{2}-1}\bigcup\limits_{j=1}^{\frac{n}{2}-1}\{x_{2i+1}x_{2j}\})+(x_3)$.

(c) If $k_2=0$ and $k_n=1$, then similar to the discussion of the case (b), we deduce that $u\in(\bigcup\limits_{i=1}^{\frac{n}{2}}\{x_1x_{2i}\})\subseteq(\bigcup\limits_{i=1}^{\frac{n}{2}-1}\{x_1x_{2i}\})+(x_n)$.

(d) If $k_2=k_n=0$,  then  Eq. (\ref{eqn: equality21}) becomes
 	\begin{align}\label{eqn: equality22}
 		u\prod\limits_{k=3}^{n}x_k=h\prod\limits_{i=3}^{n-1}u_i^{k_i},
 	\end{align}
 where $\sum\limits_{i=3}^{n-1}k_i=t-1$. Likewise,   we can get that $k_{2i-1}=k_{2i}=1$ for some $2\leq i\leq \frac{n}{2}-1$. Let $a_1=\min\{2\leq i\leq \frac{n}{2}-1\mid k_{2i-1}=k_{2i}=1\}$. We consider two subcases:

(i) If $a_1=2$, then  $k_{2b_1}=k_{2b_1+1}=1$ for  some $2\leq b_1\leq\frac{n}{2}-1$ by arguments similar to case (b) above.  By comparing the degrees of $x_{2a_1}$ and $x_{2b_1+1}$ in Eq. (\ref{eqn: equality22}), we can get that $u\in (x_{2a_1}x_{2b_1+1})\subseteq(\bigcup\limits_{i=2}^{\frac{n}{2}-1}\bigcup\limits_{j=1}^{\frac{n}{2}-1}\{x_{2i+1}x_{2j}\})$.

(ii) If $3\leq a_1\leq \frac{n}{2}-1$, then by the definition of $a_1$, we have $k_{2i-1}=0$ or $k_{2i}=0$ for any $2\leq i\leq a_1-1$, and $k_{2b_2}=k_{2b_2+1}=1$ for some $b_2\in [\frac{n}{2}-1]\backslash[a_1-1]$. Indeed, if
$k_{2b_2}=0$ or $k_{2b_2+1}=0$, then $\sum\limits_{i=3}^{n-1}k_i\leq (n-3)-(a_1-2)-(\frac{n}{2}-a_1)=\frac{n}{2}-1=t-2$, a contradiction.
By comparing the degrees of $x_{2a_1}$ and $x_{2b_2+1}$ in Eq. (\ref{eqn: equality22}), we can get that $u\in(x_{2a_1}x_{2b_2+1})\subseteq(\bigcup\limits_{i=2}^{\frac{n}{2}-1}\bigcup\limits_{j=1}^{\frac{n}{2}-1}\{x_{2i+1}x_{2j}\})$.

(II) If $k_1=0$, then Eq. (\ref{eqn: equality21}) becomes
\begin{align}\label{eqn: equality23}
	u(x_1x_2)^{{\bf w}_1}\prod\limits_{k=3}^{n}x_k=h\prod\limits_{i=2}^{n}u_i^{k_i},
\end{align}
where $\sum\limits_{i=2}^{n}k_i=t$. Likewise,  we have $k_{2i-1}=k_{2i}=1$ for some $2\leq i\leq \frac{n}{2}$. Let $a_2=\min\{2\leq i\leq \frac{n}{2}\mid k_{2i-1}=k_{2i}=1\}$. we divide into two subcases:

(i) If $a_2=2$, then  $k_{2b_3}=k_{2b_3+1}=1$ for  some $b_3\in [\frac{n}{2}-1]$ by arguments similar
to case (b) above. By comparing the degrees of $x_{2a_2}$ and $x_{2b_3+1}$ in Eq. (\ref{eqn: equality23}), we can deduce that $u\in (x_{2a_2}x_{2b_3+1})\subseteq(\bigcup\limits_{i=2}^{\frac{n}{2}-1}\bigcup\limits_{j=1}^{\frac{n}{2}-1}\{x_{2i+1}x_{2j}\})+(x_3)$.

(ii) If $3\leq a_2\leq \frac{n}{2}$, then  by the definition of $a_2$, we have  $k_{2i-1}=0$ or $k_{2i}=0$ for any $2\leq i\leq a_2-1$, and $k_{2b_4}=k_{2b_4+1}=1$ for some $b_4\in [\frac{n}{2}-1]\backslash[a_2-2]$.
Indeed, if $k_{2b_4}=0$ or $k_{2b_4+1}=0$, then
$\sum\limits_{i=2}^{n}k_i\leq (n-1)-(a_2-2)-(\frac{n}{2}-a_2+1)=\frac{n}{2}=t-1$, a contradiction.
 By comparing the degrees of $x_{2a_2}$ and $x_{2b_4+1}$ in Eq. (\ref{eqn: equality23}), we can get that $u\in(x_{2a_2}x_{2b_4+1})\subseteq(\bigcup\limits_{i=2}^{\frac{n}{2}-1}\bigcup\limits_{j=1}^{\frac{n}{2}-1}\{x_{2i+1}x_{2j}\})+(x_n)$.

Therefore, if $t=\frac{n}{2}+1$, then the desired conclusion holds.

In the following, we assume that  $t\geq \frac{n}{2}+2$. We consider two cases:

(i) If $k_1\geq 1$, then
$(I^t:f)\subseteq (I^{t-1}:\frac{f}{(x_1x_2)^{{\bf w}_1}})\subseteq (\bigcup\limits_{k=1}^{\frac{n}{2}-1}\{x_1x_{2k}\})+ (\bigcup\limits_{i=2}^{\frac{n}{2}-1}\bigcup\limits_{j=1}^{\frac{n}{2}-1}\{x_{2i+1}x_{2j}\})+(x_3,x_n)$ by Eq.  (\ref{eqn: equality20}) and  induction.

(ii) If $k_1=0$,
	then Eq. (\ref{eqn: equality20}) becomes
	\begin{align}\label{eqn: equality24}
		u(x_1x_2)^{(t-\frac{n}{2}){\bf w}_1}\prod\limits_{k=3}^{n}x_k=h\prod\limits_{i=2}^{n}u_i^{k_i},
	\end{align}
	where $\sum\limits_{i=2}^{n}k_i=t$. Likewise,  we have $k_{2i-1}=k_{2i}=1$ for some $2\leq i\leq \frac{n}{2}$. Let $a_3=\min\{2\leq i\leq \frac{n}{2}\mid k_{2i-1}=k_{2i}=1\}$. We divide into two subcases:

(i) If $a_3=2$, then  $k_{2b_5}=k_{2b_5+1}=1$ for some $b_5\in [\frac{n}{2}-1]$  by arguments similar
to the case $a_2=2$ above. By comparing the degrees of $x_{2a_3}$ and $x_{2b_5+1}$ in Eq. (\ref{eqn: equality24}), we can deduce that $u\in (x_{2a_3}x_{2b_5+1})\subseteq(\bigcup\limits_{i=2}^{\frac{n}{2}-1}\bigcup\limits_{j=1}^{\frac{n}{2}-1}\{x_{2i+1}x_{2j}\})+(x_3)$.

(ii) If $3\leq a_3\leq \frac{n}{2}$, then,  by arguments similar
to the case $3\leq a_2\leq \frac{n}{2}$ above, we deduce that $k_{2i-1}=0$ or $k_{2i}=0$ for any $2\leq i\leq a_3-1$, and $k_{2b_6}=k_{2b_6+1}=1$ for some $b_6\in [\frac{n}{2}-1]\backslash[a_3-2]$.
By comparing the degrees of $x_{2a_3}$ and $x_{2b_6+1}$ in Eq. (\ref{eqn: equality24}), we can get that $u\in(x_{2a_3}x_{2b_6+1})\subseteq(\bigcup\limits_{i=2}^{\frac{n}{2}-1}\bigcup\limits_{j=1}^{\frac{n}{2}-1}\{x_{2i+1}x_{2j}\})+(x_n)$.
\end{proof}

 For a monomial $u=x_1^{a_1}\cdots x_n^{a_n}$, we set $\deg_{x_i}(u)=a_i$ and $\supp(u)=\{x_i\mid a_i>0\}$. For a monomial ideal $I$, we set $\supp(I)=\bigcup\limits_{u\in \mathcal{G}(I)}\supp(u)$.

     \begin{Theorem}\label{even2}
		Let $t$ and $C_{\bf w}^n$ be  as in Theorem \ref{even1}. Then
$\depth(S/I(C_{\bf w}^n)^t)=1$.
 \end{Theorem}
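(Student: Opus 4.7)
The plan is to establish $\depth(S/I(C_{\bf w}^n)^t) = 1$ by proving matching upper and lower bounds. For the upper bound, I would apply Lemma \ref{up} to the monomial $f$ from Theorem \ref{even1}. That theorem identifies
\[
(I(C_{\bf w}^n)^t : f) = (x_3, x_n) + \Bigl(\bigcup\limits_{k=1}^{\frac{n}{2}-1}\{x_1 x_{2k}\}\Bigr) + \Bigl(\bigcup\limits_{i=2}^{\frac{n}{2}-1}\bigcup\limits_{j=1}^{\frac{n}{2}-1}\{x_{2i+1} x_{2j}\}\Bigr).
\]
Modulo $(x_3, x_n)$, the remaining monomial generators factor as the product ideal $(x_1, x_5, x_7, \ldots, x_{n-1}) \cdot (x_2, x_4, \ldots, x_{n-2})$, which is the edge ideal of the complete bipartite graph $K_{\frac{n}{2}-1, \frac{n}{2}-1}$ on all surviving variables of $S/(x_3, x_n)$. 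This ideal has depth one (its Stanley--Reisner complex is the disjoint union of two full simplices on the two bipartition classes, so the reduced $0$-th homology is nontrivial and forces $\depth = 1$), hence $\depth(S/(I(C_{\bf w}^n)^t : f)) = 1$, and Lemma \ref{up} yields $\depth(S/I(C_{\bf w}^n)^t) \leq 1$.

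For the lower bound, it suffices to show $\mathfrak{m} \notin \Ass_S(S/I^t)$, where $I = I(C_{\bf w}^n)$. I would proceed by induction on $t \geq \frac{n}{2}+1$, with the inductive hypothesis supplying $\depth(S/I^{t-1}) \geq 1$. For the inductive step, I apply Lemma \ref{exact}(1) to
\[
0 \longrightarrow S/(I^t : x_3)(-1) \stackrel{\cdot x_3}{\longrightarrow} S/I^t \longrightarrow S/(I^t, x_3) \longrightarrow 0.
\]
By \cite[Lemma 4.1]{ZCLY}, $(I^t, x_3) = I(C_{\bf w}^n \setminus x_3)^t + (x_3)$, and $C_{\bf w}^n \setminus x_3$ is a path $P_{\bf w}^{n-1}$ whose unique non-trivial edge is at the end, so Lemma \ref{nontrivialpath2}(3) gives $\depth(S/(I^t, x_3)) \geq \max\{\lceil \frac{n-t}{3}\rceil, 1\} \geq 1$. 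To bound $(I^t : x_3)$ from below, I would peel off $x_2$ in a second short exact sequence and reduce, via standard colon identities interacting with the weighted edge $u_1$, to the power $I^{t-1}$ (covered by the inductive hypothesis) and to edge ideals of smaller paths handled by Lemma \ref{nontrivialpath2} and the induced-subgraph bounds of Lemma \ref{pathsubgraph}. Combining these via Lemma \ref{exact}(1) then gives $\depth(S/I^t) \geq 1$.

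The main obstacle is the analysis of $(I^t : x_3)$: because of the weighted edge $u_1 = (x_1 x_2)^{{\bf w}_1}$, the naive identity $(I^t : x_2 x_3) = I^{t-1}$ can fail, and extra generators such as $x_1^{{\bf w}_1-1} x_2^{{\bf w}_1-1}$ or hybrid terms coming from powers of $u_1$ appear in the colon. A delicate case analysis, in the spirit of the computations in the proofs of Theorems \ref{6-cycle} and \ref{geq}, is required to verify that every intermediate quotient arising in the resulting chain of short exact sequences has depth at least one, so that Lemma \ref{exact}(1) can propagate the bound $\depth \geq 1$ to $S/I^t$.
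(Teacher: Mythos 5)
Your upper bound is fine and is essentially the paper's argument: the same $f$, the same colon ideal from Theorem \ref{even1}, and the observation that modulo $(x_3,x_n)$ it is the product of the two ``parity'' variable ideals; the paper concludes $\depth(S/(I^t:f))=1$ via Lemma \ref{sum1}(2) instead of your Stanley--Reisner remark, a cosmetic difference.

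The lower bound, however, contains a genuine gap, and the route you sketch cannot be completed as described. First, the reduction of $(I^t:x_3)$ to $I^{t-1}$ is not available here: when only $e_1$ is non-trivial, $(I^t:x_2x_3)\neq I^{t-1}$. For instance, for $n\ge 6$ and $t=2$ the monomial $u=x_1^{{\bf w}_1}x_2^{{\bf w}_1-1}x_4$ satisfies $ux_2x_3=(x_1x_2)^{{\bf w}_1}(x_3x_4)\in I^2$ but $u\notin I$; the analogous identities used in Theorems \ref{3-cycle}, \ref{6-cycle} and \ref{geq} work only because there both edges adjacent to the trivial edge being coloned out are non-trivial, so the leftover factor is again divisible by that trivial edge. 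You acknowledge that ``hybrid'' generators appear and defer the verification that all intermediate quotients have depth at least one to ``a delicate case analysis,'' so the decisive step is simply not carried out. Second, and more seriously, the scheme you describe (peel $x_3$, then $x_2$, and bound the pieces by $I^{t-1}$, by powers of paths via Lemmas \ref{path}, \ref{nontrivialpath2}, \ref{pathsubgraph}, and by Lemma \ref{up}) nowhere uses that $n$ is even. If it worked, the same chain would give $\depth(S/I^t)\geq 1$ for odd $n$ and $t\geq\frac{n+1}{2}$, contradicting Theorem \ref{odd-cycle}. Indeed, for odd $n$ the witness $f=(x_1x_2)^{(t-\frac{n-1}{2}){\bf w}_1}\prod_{j=3}^{n}x_j$ of Theorem \ref{odd-cycle} is divisible by $x_2x_3$, so $\mm\in\Ass_S(S/(I^t:x_3))$ and $\mm\in\Ass_S(S/(I^t:x_2x_3))$, i.e.\ exactly the colon quotients in your chain have depth $0$ in the odd case; the parity must therefore be exploited inside the analysis of these colon ideals, which is precisely the part you leave open. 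The paper's proof of $\depth(S/I^t)\geq 1$ is structurally different: it filters $I^t$ by the totally ordered generators $L_i^{(t)}$ of \cite[Setting 4.5]{ZCLY}, identifies the last term of the filtration with $I(P_{\bf w}^n)^t$ for a trivially weighted path (Lemma \ref{path}), and shows $\depth(S/(J_i:L_i^{(t)}))\geq 2$ for every colon in the chain using the explicit description of $(J_i:L_i^{(t)})$ from \cite[Theorem 4.10]{ZCLY}; the evenness of $n$ enters there, in ruling out $x_2\in\supp(J_i:L_i^{(t)})$ and the near-maximal case $(x_1,\hat{x}_2,x_3,\ldots,x_n)$. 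Without either that machinery or a complete direct analysis of $(I^t:x_3)$ that genuinely uses parity, your lower bound does not stand.
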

	\begin{proof}
   Let $I=I(C_{\bf w}^n)$ and $f=(x_1x_2)^{(t-\frac{n}{2}){\bf w}_1}\prod\limits_{k=3}^{n}x_k$. Then  by Theorem \ref{even1}, we have
    \begin{align*}
(I^t:f)&=(\bigcup\limits_{k=1}^{\frac{n}{2}-1}\{x_1x_{2k}\})+(\bigcup\limits_{i=2}^{\frac{n}{2}-1}\bigcup\limits_{j=1}^{\frac{n}{2}-1}\{x_{2i+1}x_{2j}\})+(x_3,x_n)\\
&=(x_1,\hat{x}_3,x_5,\ldots,x_{n-1})(x_2,x_4,\ldots,x_{n-2},\hat{x}_n)+(x_3,x_n).
\end{align*}
where $\widehat{x_3}$ denotes the element $x_3$ omitted from  $\{x_1,x_3,x_5,\ldots,x_{n-1}\}$. Thus
	\[
	\depth(S/I^t)\leq \depth(S/(I^t:f))=1
	\]
by Lemmas \ref{sum1} and \ref{up}.
	
Next, we will prove that $\depth(S/I^t)\geq 1$.
Let $L^{(t)}=\{L_1^{(t)}, L_2^{(t)}, \ldots, L_r^{(t)}\}$ be a specific totally ordered set of all elements in $\mathcal{G}(I^t)$ as defined  in \cite[Setting 4.5]{ZCLY}. For simplicity, we set $L_i=L_i^{(1)}$. Let
  $C=\{L_i^{(t)}: L_1|^{edge}L_i^{(t)}\ \text{for\ }i\in[r]\}$ be a set of $L_i^{(t)}$   divided by  $L_1$ as an edge and $c=|C|$.  For each $i\in [c]$, we write $L_i^{(t)}$ as $L_{i}^{(t)}= L_{i_1}^{a_{i_1}}L_{i_2}^{a_{i_2}}\cdots L_{i_{k_i}}^{a_{i_{k_i}}}$ with $i_1=1$ and $2 \leq  i_2< \cdots < i_{k_{i}} \leq n$, where $\sum\limits_{j=1}^{k_i}a_{i_{j}}=t$  with  $a_{i_{j}}>0$ for all $j \in[k_i]$.
Assume that $J_i=(L_{i+1}^{(t)}, \ldots, L_r^{(t)})$ for all $i\in [c]$.
From the proof of \cite[Theorem 4.11]{ZCLY}, we obtain  $J_c=I(P_{\bf w}^n)^t$, where $I(P_{{\bf w}}^n)=(x_2x_3,x_3x_4,\ldots,x_{n-1}x_n,x_nx_1)$. So
	\begin{align*}
		\depth(S/J_c)=\depth(S/I(P_{\bf w}^n)^t)=\max\{\lceil\frac{n-t+1}{3}\rceil,1\}\geq 1
	\end{align*}
   by Lemma \ref{path}.  Let $s_i$ be the degree of $L_{i}^{(t)}$ for each $i\in [c]$.
 By the exact sequences
\begin{gather*}
	\begin{matrix}
		0 & \longrightarrow & \frac{S}{(J_1:L_{1}^{(t)})}(-s_1)  & \stackrel{ \cdot L_{1}^{(t)}} \longrightarrow  & \frac{S}{J_1} & \longrightarrow & \frac{S}{I^t} & \longrightarrow & 0 \\
		0 & \longrightarrow & \frac{S}{(J_2:L_{2}^{(t)})}(-s_2)  & \stackrel{ \cdot L_{2}^{(t)}} \longrightarrow  & \frac{S}{J_2} & \longrightarrow & \frac{S}{J_1} & \longrightarrow & 0  \\
		0 & \longrightarrow & \frac{S}{(J_3:L_{3}^{(t)})}(-s_3) & \stackrel{ \cdot L_{3}^{(t)}} \longrightarrow & \frac{S}{J_3} &\longrightarrow & \frac{S}{J_2} & \longrightarrow & 0 \\
		&  &\vdots&  &\vdots&  &\vdots&  \\
		0&  \longrightarrow & \frac{S}{(J_{c}:L_{c}^{(t)})}(-s_c) & \stackrel{ \cdot L_{c-1}^{(t)}} \longrightarrow & \frac{S}{J_{c}}& \longrightarrow & \frac{S}{J_{c-1}}& \longrightarrow & 0,
	\end{matrix}
\end{gather*}
we know that $\depth(S/I^t)\geq 1$ if $\depth(S/(J_i:L_i^{(t)}))\geq 2$ for any $i\in[c]$. It remains to prove that $\depth(S/(J_i:L_i^{(t)}))\geq 2$ for any $i\in[c]$.

 By \cite[Theorems 4.10]{ZCLY},  we obtain that   for each $i\in [c]$, $(J_i:L_{i}^{(t)})=K_i+Q_i$, where
	 $K_i=(L_4,L_5,\ldots,L_{n-2})+\sum\limits_{j=2}^{p_i}(x_{{i_j}+2})+(x_3,x_n)$,  $Q_{i}=\sum\limits_{j=0}^{q_i}(x_{n-2j})$, $p_i=\begin{cases}
		k_i-1,& \text{if $i_{k_i}=n$,}\\
		k_i,& \text{otherwise,}\\
	\end{cases}$ and $q_i=\max\{\ell_i:L_{n+1-2s}|^{edge}L_{i}^{(t)}$ for all $0\leq s\leq \ell_i\}$.

If $k_i=1$, or $k_i=2$ and $i_{k_i}=n$, then  $(J_1:L_{1}^{(t)})=(L_4,L_5,\ldots,L_{n-2})+(x_3,x_n)$. Thus $\depth(S/(J_1:L_1^{(t)}))\geq 2$ by Lemma \ref{path}. In the following, we consider the case $k_i\geq 3$,  or $k_i=2$ and $i_{k_i}<n$.

Note that if $i_{k_i}=n$, then $p_i={k_i}-1$, thus $2\le i_{p_i}<i_{k_i}=n$ by the choice of $k_i$, and if $i_{k_i}<n$, then  $2\le i_{p_i}=i_{k_i}<n$.
So $2\leq i_j\leq n-1$ for any $2\leq j\leq p_i$, which forces $i_j+2\in \{4,5,\ldots,n\}\cup\{n+1\}$. It follows that  $x_2\notin\mathcal{G}(K_i)$.
Next we prove that  $x_2\notin\mathcal{G}(Q_i)$, which forces $x_2\notin\mathcal{G}(J_i:L_i^{(t)})$.

Conversely, if $x_2\in\mathcal{G}(Q_i)$, then  $q_i=\frac{n}{2}-1$ by the expression of $Q_i$. By the definition of $q_i$,
 $L_{n+1-2s}\mid^{edge}L_i^{(t)}$ for all $0\leq s \leq \frac{n}{2}-1$. So we can write $L_i^{(t)}$ as
$L_i^{(t)}=h_1\prod\limits^{\frac{n}{2}}_{s=1}L_{n+1-2s}$ for some monomial $h_1\in \mathcal{G}(I^{t-\frac{n}{2}})$. This implies $h_1\prod\limits^{\frac{n}{2}}_{k=1}L_{2k}\in \mathcal{G}(I^t)$.
By the expression of $L_i^{(t)}$, we know that $L_i^{(t)}=(x_1x_2)^{{\bf w}_1-1}v$  with $v=h_1\prod\limits^{\frac{n}{2}}_{k=1}L_{2k}$, which contradicts with $L_i^{(t)}\in \mathcal{G}(I^t)$.

By the definition of $Q_i$ and $K_i$,  we have $(x_1x_2)^{{\bf w}_1}, x_2x_3\notin\mathcal{G}(Q_i)$,  $(x_1x_2)^{{\bf w}_1}, x_2x_3\notin\mathcal{G}(K_i)$, which means that $ x_2x_3,(x_1x_2)^{{\bf w}_1}\notin\mathcal{G}(J_i:L_i^{(t)})$.
Thus $x_2\notin\supp(J_i:L_i^{(t)})$ for any $i\in [c]$. Therefore,
 $\depth(S/(J_i:L_i^{(t)}))\geq 1$.
Note that $(J_i:L_i^{(t)})=(L_4,L_5,\ldots,L_{n-2})+\sum\limits_{j=2}^{p_i}(x_{{i_j}+2})+\sum\limits_{j=0}^{q_i}(x_{n-2j})+(x_3,x_n)$, it follows from Lemma \ref{path} that
 $\depth(S/(J_i:L_i^{(t)}))=1$ if and only if $(J_i:L_i^{(t)})=(x_1,\hat{x}_2,x_3,\ldots,x_n)$  for any $i\in[c]$. Therefore, it is remains to  prove $(J_i:L_i^{(t)})\ne (x_1,\hat{x}_2,x_3,\ldots,x_n)$. In fact, if   $(J_i:L_i^{(t)})=(x_1,\hat{x}_2,x_3,\ldots,x_n)$ for some $i\in[c]$, then  $x_{2k-1}\in (J_i:L_i^{(t)})$ for any $k\in [\frac{n}{2}]$.
 Since   for any $k\in\{1\}\cup\{3,4,\ldots,\frac{n}{2}\}$, $x_{2k-1}\notin(L_4,L_5,\ldots,L_{n-2})+\sum\limits_{j=0}^{q_i}(x_{n-2j})+(x_3,x_n)$, we have  $x_{2k-1}\in  \sum\limits_{j=2}^{p_i}(x_{{i_j}+2})$.  Depending on the range of $k$,  we see that $\bigcup\limits_{k=1}^{\frac{n}{2}-1}\{2k+1\}\subseteq \bigcup\limits_{j=2}^{p_i}\{i_j\}$. Since $ p_i\le k_i$,  $L_{i_j}\mid^{edge}L_i^{(t)}$ for all $2\le j\le p_i$ by the expression $L_{i}^{(t)}=L_{i_1}^{a_{i_1}}L_{i_2}^{a_{i_2}}\cdots L_{i_{k_i}}^{a_{i_{k_i}}}$.  Thus $L_{2k+1}\mid^{edge}L_i^{(t)}$ for all $k\in[\frac{n}{2}-1]$. Again by the definition of $C$, we get $L_1\mid^{edge}L_i^{(t)}$.
So  $L_i^{(t)}$ can be expressed as
$L_i^{(t)}=h_2\prod\limits^{\frac{n}{2}}_{s=1}L_{2s+1}$ for some monomial $h_2\in \mathcal{G}(I^{t-\frac{n}{2}})$.
Note that $L_i^{(t)}$ can also be written as $L_i^{(t)}=(x_1x_2)^{{\bf w}_1-1}v$  with $v=h_2\prod\limits^{\frac{n}{2}}_{k=1}L_{2k}$, which contradicts with $L_i^{(t)}\in \mathcal{G}(I^t)$.

Therefore,   $\depth(S/(J_i:L_i^{(t)}))\geq 2$ and we finish the proof.
 \end{proof}

\medskip
\hspace{-6mm} {\bf Acknowledgments}

 \vspace{3mm}
\hspace{-6mm}  This research is supported by the Natural Science Foundation of Jiangsu Province
(No. BK20221353) and the National NaturalScience Foundation of China (No.
12471246). The authors are grateful to the software systems \cite{Co} and \cite{GS}
 for providing us with a large number of examples.

	\end{document}